\documentclass[12pt]{amsart}

\usepackage{amsmath}
\usepackage{amssymb}
\usepackage{amsthm}
\usepackage{amscd}
\usepackage{mathrsfs}
\usepackage[final]{showlabels}

\theoremstyle{plain}
\newtheorem{thm}{Theorem}[subsection]
\newtheorem{prop}[thm]{Proposition}
\newtheorem{lem}[thm]{Lemma}
\newtheorem{cor}[thm]{Corollary}

\newtheorem{thmintro}{Theorem}

\newtheorem*{conjintro}{Conjecture}

\theoremstyle{definition}
\newtheorem{definition}[thm]{Definition}

\newtheorem{example}[thm]{Example}
\newtheorem*{acknowledgement}{Acknowledgement}

\theoremstyle{remark}
\newtheorem{rem}[thm]{Remark}

\numberwithin{equation}{section}

\DeclareMathOperator{\Gal}{Gal}
\DeclareMathOperator{\rankZ}{rank_{\mathbb{Z}}}
\DeclareMathOperator{\rankZp}{rank_{\mathbb{Z}_\mathit{p}}}
\DeclareMathOperator{\rankp}{rank_\mathit{p}}
\DeclareMathOperator{\ranktwo}{rank_{2}}
\DeclareMathOperator{\Gnorm}{\mathit{N} \mskip -2mu}
\DeclareMathOperator{\Lnorm}{\mathcal{N} \mskip -2mu}

\begin{document}

\title{Abelian $p$-extensions with restricted $p$-ramification 
and the cyclotomic $\mathbb{Z}_2$-extension of $\mathbb{Q} (\sqrt{q})$}
\footnote[0]{2020 Mathematics Subject Classification. 11R23, 11R11}
\author{Tsuyoshi Itoh and Naoki Kumakawa}
%\date{\today}

\begin{abstract}
We develop Hachimori's study on the unramified Iwasawa modules 
using extensions with restricted $p$-ramification.
Let $K$ be an algebraic number field, and $p$ a prime number which 
splits into two distinct primes $\mathfrak{p}$, $\mathfrak{p}'$ in $K$.
Assume that $K$ and $p$ satisfies several (somewhat strict) conditions.  

Let $K_\infty /K$ be the cyclotomic $\mathbb{Z}_p$-extension.
In the present paper, we give a method to study the structure of 
the unramified Iwasawa module $X (K_\infty)$ by using 
abelian $p$-extensions unramified outside $\mathfrak{p}$.
We give a sufficient condition for $|X (K_\infty)|$ to be finite  
in terms of such extensions.
We also give a similar criterion for $X (K_\infty)$ to be finitely generated 
over $\mathbb{Z}_p$.

In the latter part of the present paper, we consider the case where 
$k = \mathbb{Q} (\sqrt{q})$ with an odd prime number $q$ and 
apply our results to the cyclotomic $\mathbb{Z}_2$-extension $k_\infty /k$. 
We give a necessary and sufficient condition for $X (k_\infty)$ to be cyclic over $\mathbb{Z}_2$, 
which is different from the former results given by either Mouhib-Movahhedi 
or Mizusawa-Mouhib.
We also give several sufficient conditions for the validity of Greenberg's conjecture.
\end{abstract}

\maketitle

\section{Introduction}\label{section_intro}
We first recall the Iwasawa invariants of the cyclotomic $\mathbb{Z}_p$-extension of 
algebraic number fields, and several conjectures concerning these invariants.

Let $p$ be a fixed prime number.
For an algebraic number field $\mathbb{K}$, 
we denote by $\mathbb{K}_\infty / \mathbb{K}$ the cyclotomic $\mathbb{Z}_p$-extension. 
We also denote by $\mathbb{K}_n$ the $n$th layer of $\mathbb{K}_\infty / \mathbb{K}$ 
for a non-negative integer $n$ 
($\mathbb{K}_n /\mathbb{K}$ is the unique cyclic subextension of degree $p^n$).
For a set $S$, we denote by $|S|$ the cardinality of $S$.

As is well known, 
the Iwasawa invariants $\lambda$, $\mu$, $\nu$ of $\mathbb{K}_\infty / \mathbb{K}$ are 
defined, and these invariants describe the behavior of the order of the 
Sylow $p$-subgroup $A (\mathbb{K}_n)$ of the ideal class group of $\mathbb{K}_n$.
That is, the formula 
\[ |A (\mathbb{K}_n)| = p^{\lambda n + \mu p^n + \nu} \]
holds for all sufficiently large $n$ (see, e.g., \cite{Iwa73}).
This is called Iwasawa's class number formula.
Recall that $\lambda$ and $\mu$ deeply relate to the structure of 
the unramified Iwasawa module $X (\mathbb{K}_\infty)$, 
which is the Galois group of the maximal unramified abelian pro-$p$ extension 
of $\mathbb{K}_\infty$.

Concerning the Iwasawa invariants, the following conjectures are widely known. 

\begin{conjintro}[{Greenberg's conjecture \cite{Gre76}}]
If $\mathbb{K}$ is totally real, then $\lambda = \mu =0$.
\end{conjintro}

\begin{conjintro}[{$\mu=0$ conjecture \cite{Iwa73mu}}] 
$\mu =0$ for every $\mathbb{K}$.
\end{conjintro}

It is also well known that the following are equivalent:
\begin{itemize}
\item[(i)] $\lambda  = \mu =0$,  
\item[(ii)] $|X (\mathbb{K}_\infty)|$ is finite,
\item[(iii)] $|A (\mathbb{K}_n)|$ is bounded.
\end{itemize}
For the triviality of $\mu$, it is known that the following are equivalent:
\begin{itemize}
\item[(i)] $\mu =0$, 
\item[(ii)] $X (\mathbb{K}_\infty)$ is finitely generated over $\mathbb{Z}_p$,
\item[(iii)] the $p$-rank of $A (\mathbb{K}_n)$ is bounded.
\end{itemize}
(For the above results, see, e.g., \cite{Gre76}, \cite{Iwa73}, \cite{Iwa73mu}, \cite{NSW}, etc.)
Hence we would like to obtain a useful criterion for the stabilization of 
$|A (\mathbb{K}_n)|$ (or the $p$-rank of $A (\mathbb{K}_n)$).

In \cite{Hachi}, Hachimori gave fundamental results on 
``Iwasawa modules with restricted $p$-ramification''.
As an application of his results, Hachimori also gave
a criterion for the triviality of $X (\mathbb{K}_\infty)$ 
in a certain situation (\cite[Theorem 8.1]{Hachi}).
His result is considered as a re-interpretation
and a generalization of Fukuda-Komatsu's result (\cite{F-K}).
He also gave a precise criterion for the case where 
$\mathbb{K}$ is a pure cubic field and $p$
splits into two distinct primes in $\mathbb{K}$ 
(\cite[Example 8.3]{Hachi}).

In the present paper, we shall develop Hachimori's theory
for more general situations, and give criteria for the
finiteness of $|X (\mathbb{K})|$.
We also give criteria for the stabilization of the $p$-rank of $A (\mathbb{K}_n)$.
Moreover, we also study the cyclotomic $\mathbb{Z}_2$-extension of certain real quadratic 
fields by using our results.

\subsection{Assumptions on $K/F$ and $p$}\label{assumptions}
Throughout the present paper, an extension $K/F$ of algebraic number fields
and a prime number $p$ satisfy the following conditions:
\begin{itemize}
\item[(F1)] $F$ is totally real.
\item[(F2)] There is only one prime $\mathfrak{p}^0$ of $F$ which is lying above $p$.
\item[(F3)] $\mathfrak{p}^0$ is totally ramified in $F_\infty$.
\item[(F4)] Let $M_p (F) / F$ be the maximal abelian pro-$p$ extension
unramified outside $p$.
Then $M_p (F) = F_\infty$.
\item[(K1)] $K \cap F_\infty = F$.
\item[(K2)] There are exactly two primes $\mathfrak{p}$, $\mathfrak{p}'$
in $K$ lying above $p$.
\item[(K3)] Let $K_{\mathfrak{p}}$ (resp. $F_{\mathfrak{p}^0}$)
be the completion of $K$ (resp. $F$) at
$\mathfrak{p}$ (resp. $\mathfrak{p}^0$).
Then $K_{\mathfrak{p}} = F_{\mathfrak{p}^0}$. 
\item[(K4)] Both $\mathfrak{p}$ and $\mathfrak{p}'$ are totally ramified in $K_\infty$.
\end{itemize}
By these assumptions, for every non-negative integer $n$, 
we see that $A (F_n)$ is trivial and Leopoldt's conjecture holds for $F_n$.

The following are typical cases which satisfy these assumptions.
\begin{itemize}
\item $F = \mathbb{Q}$, $K$ is a quadratic
field, and $p$ splits in $K$.
\item $F = \mathbb{Q}$, $K$ is a (non-Galois) cubic  
field, and $p$ splits into two distinct primes in $K$ (cf. \cite{Hachi}).
\end{itemize}

\subsection{Notation}
We shall define notations will be used later.
In the following, $\mathbb{K}$ denotes a finite extension field
over $\mathbb{Q}$, and $\mathfrak{P}$ denotes a prime of $\mathbb{K}$ lying above $p$.
\begin{itemize}
\item $O_{\mathbb{K}}$ : the ring of integers of $\mathbb{K}$.
\item $E (\mathbb{K})$ : the group of units of $\mathbb{K}$.
\item $A (\mathbb{K})$ : the Sylow $p$-subgroup of the ideal class group.
\item $\mathbb{K}_\mathfrak{P}$ : the completion of $\mathbb{K}$ at $\mathfrak{P}$.
\item $L (\mathbb{K}) / \mathbb{K}$ : the maximal unramified abelian $p$-extension.
\item $X (\mathbb{K}) := \Gal (L (\mathbb{K}) / \mathbb{K})$
(recall that $X (\mathbb{K}) \cong A (\mathbb{K})$ by class field theory).
\end{itemize}
Same as above, we denote by 
$\mathbb{K}_\infty / \mathbb{K}$ the cyclotomic $\mathbb{Z}_p$-extension,
and $\mathbb{K}_n$ its $n$th layer ($n$ is a non-negative integer).
However, we use $\mathbb{B}_\infty$, $\mathbb{B}_n$ for the case where $\mathbb{K} = \mathbb{Q}$.
We define $L (\mathbb{K}_\infty)$, $X (\mathbb{K}_\infty)$ similar as above.
We also define the following:
\begin{itemize}
\item $M_\mathfrak{P} (\mathbb{K}_n) / \mathbb{K}_n$ : the maximal abelian pro-$p$ extension
unramified outside the primes lying above $\mathfrak{P}$.
\item $\mathfrak{X}_\mathfrak{P} (\mathbb{K}_n) :=
\Gal (M_\mathfrak{P} (\mathbb{K}_n) / \mathbb{K}_n)$.
\item $Z_\mathfrak{P} (\mathbb{K}_n) :=
\Gal (M_\mathfrak{P} (\mathbb{K}_n) / L (\mathbb{K}_n))$.
\end{itemize}
We also define $M_\mathfrak{P} (\mathbb{K}_\infty)$, 
$\mathfrak{X}_\mathfrak{P} (\mathbb{K}_\infty)$,
$Z_\mathfrak{P} (\mathbb{K}_\infty)$ similarly.

Here we shall define two subgroups of $A (\mathbb{K}_n)$ which will be used later.
Let $A (\mathbb{K}_n)^{\Gal (\mathbb{K}_n / \mathbb{K})}$ the 
$\Gal (\mathbb{K}_n / \mathbb{K})$-invariant subgroup of $\mathbb{K}$, 
and $\overline{A} (\mathbb{K}_n)^{\Gal (\mathbb{K}_n / \mathbb{K})}$ the 
subgroup consists of the classes which include an ideal invariant under the 
action of $\Gal (\mathbb{K}_n / \mathbb{K})$.
There are well known formulas of 
$|A (\mathbb{K}_n)^{\Gal (\mathbb{K}_n / \mathbb{K})}|$ 
and $|\overline{A} (\mathbb{K}_n)^{\Gal (\mathbb{K}_n / \mathbb{K})}|$ 
(see, e.g., \cite[p.307, Lemma 4.1]{Lang} and its proof).

For a $\mathbb{Z}_p$-module $\mathcal{X}$ which appears in the
present paper, we denote by $\rankp \mathcal{X}$ be the $p$-rank of $\mathcal{X}$
(that is, the dimension of $\mathcal{X} / p \mathcal{X}$ over $\mathbb{F}_p$).

Assume that $K/F$ and $p$ satisfy the assumptions stated in
Section \ref{assumptions}.
We define more several notations.
\begin{itemize}
\item $\mathfrak{p}_n$, $\mathfrak{p}'_n$ : the unique prime of $K_n$ lying above
$\mathfrak{p}$, $\mathfrak{p}'$, respectively.
\item $\mathfrak{p}^0_n$ : the unique prime of $F_n$ lying above
$\mathfrak{p}^0$.
\item $i_n$ : the map $A (K) \to A (K_n)$ induced from the
extension of ideals. ($i_0$ is the identity map.)
\item $L^\dagger (K_n) / K_n$ : the maximal unramified abelian $p$-extension 
such that $\mathfrak{p}'_n$ splits completely.
\item $L' (K_n) / K_n$ : the maximal unramified abelian $p$-extension 
such that $\mathfrak{p}_n$, $\mathfrak{p}'_n$ splits completely.
\item $X' (K_n) := \Gal (L' (K_n) / K_n)$.
\item $M'_\mathfrak{p} (K_n) / K_n$ : the maximal abelian pro-$p$ extension
unramified outside $\mathfrak{p}_n$ such that  $\mathfrak{p}'_n$ splits completely.
\item $\mathfrak{X}'_\mathfrak{p} (K_n) :=
\Gal (M'_\mathfrak{p} (K_n) / K_n)$.
\end{itemize}
We shall later show that $L^\dagger (K_n) = L' (K_n)$.

\subsection{Results}
We shall give general results in Section \ref{section_criteria}.
We also give the results limited to the case of real quadratic fields with $p=2$ 
in Section \ref{section_quadratic}.

Assume that $K/F$ and $p$ satisfies the assumptions stated in
Section \ref{assumptions}.
In Section \ref{section_criteria}, we give sufficient conditions
such that $X (K_\infty)$ is finite (i.e., $\lambda = \mu =0$),
or $X (K_\infty)$ is finitely generated
over $\mathbb{Z}_p$ (i.e., $\mu =0$).

In Section \ref{subsection_finiteness}, we shall consider the condition 
such that $|X (K_\infty)|$ is finite.
We state several criteria there.  
One of them is the following.

\begin{thmintro}[=Theorem \ref{finiteness_criterion1}]
Let $n$ be a positive integer.
Assume that $i_n (A (K))$ is trivial.
Then $M_{\mathfrak{p}} (K_n) = L (K_n)$ if and only if
$|X (K_\infty)| = |X (K_n)|$.
\end{thmintro}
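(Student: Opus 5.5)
The plan is to compare $M_{\mathfrak{p}}(K_n)$ with a concrete subextension of $L(K_\infty)/K_n$, and then translate the equality of fields into an equality of orders. Write $X = X(K_\infty)$, $\Gamma_n = \Gal(K_\infty/K_n)$, and let $\gamma_n$ be a topological generator of $\Gamma_n$.

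By (K4), the inertia groups $I_{\mathfrak{p}}, I_{\mathfrak{p}'} \subset G := \Gal(L(K_\infty)/K_n)$ (fix primes of $L(K_\infty)$ above them) both map isomorphically onto $\Gamma_n$. Hence $G = X \rtimes I_{\mathfrak{p}'}$, and $I_{\mathfrak{p}} = \overline{\langle a\,\tilde\gamma \rangle}$, where $\tilde\gamma$ generates $I_{\mathfrak{p}'}$ and $a \in X$.

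\emph{Step 1.} The standard computation gives
\[
X(K_n) \cong X/\bigl((\gamma_n - 1)X + \mathbb{Z}_p a\bigr),
\]
since $L(K_n)$ is the maximal abelian subextension of $L(K_\infty)/K_n$ fixed by both inertia groups.

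\emph{Step 2.} The fixed field of $[G,G]\,I_{\mathfrak{p}'}$ is abelian over $K_n$ and unramified outside $\mathfrak{p}_n$. Its Galois group is $G^{ab}/\overline{I_{\mathfrak{p}'}} \cong X_{\Gamma_n} = X/(\gamma_n-1)X$. So we get a subfield $N_n \subset M_{\mathfrak{p}}(K_n)$ containing $L(K_n)$, with $\Gal(N_n/L(K_n))$ equal to the image of $\mathbb{Z}_p a$ in $X_{\Gamma_n}$.

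\emph{Step 3.} Show that $N_n = M_{\mathfrak{p}}(K_n)$, i.e.\ every abelian $p$-extension of $K_n$ unramified outside $\mathfrak{p}_n$ lies in $L(K_\infty)$. This is where (F1)--(F4) and (K3) enter, and I expect it to be the main obstacle. I would compute $Z_{\mathfrak{p}}(K_n)$ by class field theory as the image of the principal local units $U^1_{\mathfrak{p}_n}$ modulo the closure of the image of $E(K_n)$. The identification $K_{\mathfrak{p}} = F_{\mathfrak{p}^0}$, together with $M_p(F_n) = F_\infty$ and $A(F_n) = 1$, then controls this quotient via the norm to $F_n$. In particular it should be cyclic, generated by the inertia of $\mathfrak{p}$, and the corresponding extension ramifies only at $\mathfrak{p}_n$. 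Comparing with the element $a$ from Step 2 should give the identification. Granting this, we get
\[
M_{\mathfrak{p}}(K_n) = L(K_n) \iff a \in (\gamma_n-1)X \iff X(K_n) = X_{\Gamma_n}.
\]

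\emph{Step 4.} Show that $X(K_n) = X_{\Gamma_n}$ holds if and only if $|X| = |X(K_n)|$. The direction $\Leftarrow$ is immediate, because $X(K_n)$ is a quotient of $X_{\Gamma_n}$, which is a quotient of $X$. For $\Rightarrow$ the hypothesis that $i_n(A(K))$ is trivial is used. From $X(K_n) = X_{\Gamma_n}$ and Nakayama's lemma, $X$ is finitely generated and $X_{\Gamma_n}$ is finite. To conclude $(\gamma_n - 1)X = 0$, compare the orders of $X^{\Gamma_n}$ and $X_{\Gamma_n}$ through the genus-type formulas for $|A(K_m)^{\Gal(K_m/K)}|$ and $|\overline{A}(K_m)^{\Gal(K_m/K)}|$ recalled in the notation section. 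Trivial capitulation of $A(K)$ in $K_n$ forces the norm $X(K_m)\to X(K_n)$ to be an isomorphism for all $m\ge n$. This rests on the fact that invariant classes come only from $\mathfrak{p}_m$, $\mathfrak{p}'_m$ and from $i(A(K))$, and the ambiguous-class count then stabilizes. Hence $|X(K_\infty)| = |X(K_n)|$.
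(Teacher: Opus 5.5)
\paragraph{Verdict.} Your Steps 1--3 are in substance the paper's ingredients. Step 1 is the level-$n$ form of Theorem \ref{quotient_unramified}, and Steps 2--3 together give Hachimori's isomorphism $\mathfrak{X}_\mathfrak{p}(K_n)\cong X/\omega_nX$. There is, however, a genuine gap in the forward direction of Step 4. This is the only place where the hypothesis on $i_n(A(K))$ is used, and the argument you sketch there does not go through.

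\paragraph{Why Step 4 fails as written.}
\begin{itemize}
\item Comparing $|X^{\Gamma_n}|$ with $|X_{\Gamma_n}|$ cannot force $(\gamma_n-1)X=0$. For $X=\mathbb{Z}/p^2\mathbb{Z}$ with $\gamma_n$ acting as multiplication by $1+p$, both have order $p$, yet $(\gamma_n-1)X\neq0$.
\item Your claim that invariant classes come only from $\mathfrak{p}_m$, $\mathfrak{p}'_m$ and $i_m(A(K))$ is true of $\overline{A}(K_m)^{\Gal(K_m/K)}=i_m(A(K))D(K_m)$, but false for $A(K_m)^{\Gal(K_m/K)}$ in general.
\item The discrepancy between these two groups is a unit-norm index. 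Controlling such indices is exactly what needs the extra hypothesis $\rankZ E(K)=\rankZ E(F)+1$ in Proposition \ref{Bn_Dn}. Theorem \ref{finiteness_criterion1} does not assume this, and the paper stresses that it covers cases where $A(K_n)^{\Gal(K_n/K)}\neq D(K_n)$ for every $n$.
\item Trivial capitulation alone does not make the norms $X(K_m)\to X(K_n)$ isomorphisms. For example, $A(k)=1$ while $|A(k_m)|$ grows, as in Section \ref{section_quadratic}. You never say how capitulation combines with $X(K_n)=X_{\Gamma_n}$.
\end{itemize}

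\paragraph{The missing idea.} What is missing is the algebraic form of the capitulation map; once you have it, the proof is a one-line Nakayama argument.
\begin{enumerate}
\item Run your Step 1 also at level $0$. The inertia group of $\mathfrak{p}$ in $\Gal(L(K_\infty)/K)$ is $\overline{\langle a_0\tilde\gamma\rangle}$, where $\tilde\gamma$ generates that of $\mathfrak{p}'$. Then $X(K)\cong X/Y_0$ with $Y_0=\overline{TX+\mathbb{Z}_pa_0}$.
\item Use $\tilde\gamma^{p^n}$ as your generator at level $n$. The identity $(a_0\tilde\gamma)^{p^n}=(\nu_na_0)\,\tilde\gamma^{p^n}$ shows that your $a$ equals $\nu_na_0$. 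Hence $Y_n:=\omega_nX+\mathbb{Z}_pa=\nu_nY_0$.
\item Computing the transfer shows that $i_n$ is the map $X/Y_0\to X/\nu_nY_0$, $x\mapsto\nu_nx$. So $i_n(A(K))=1$ means $\nu_nX\subseteq\nu_nY_0$, i.e.\ $Y_n=\nu_nX$.
\item Now suppose $M_\mathfrak{p}(K_n)=L(K_n)$. Then already $N_n=L(K_n)$, so Step 3 is not needed for this direction, and $Y_n=\omega_nX$.
\item Hence $\nu_nX=\omega_nX=T\,\nu_nX$. Topological Nakayama gives $\nu_nX=0$, and so $X(K_n)\cong X/\nu_nX=X$.
\end{enumerate}
This is the paper's proof.

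\paragraph{Step 3.} You need Step 3 only for the converse direction. It follows at once from $M_\mathfrak{p}(K_n)K_\infty\subseteq M_\mathfrak{p}(K_\infty)=L(K_\infty)$ (Proposition \ref{Iwasawa_modules_coincidence}). That proposition holds because $\mathcal{U}_m/\mathcal{E}(K_m)$ is a quotient of $\mathcal{U}_m/\mathcal{E}(F_m)\cong\Gal(F_\infty/F_m)$, and these groups have trivial inverse limit. Your vaguer local sketch is therefore unnecessary.
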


When $K$ is a real quadratic field, the above and some of other 
results given in Section \ref{subsection_finiteness} relate to the results given 
in Fukuda-Taya \cite{F-T} (and Taya's thesis \cite{Taya}).
We shall mention such relationships in several places.

In Section \ref{subsection_Dn}, we shall show that 
under a certain limited situation, 
the equation $|\mathfrak{X}_\mathfrak{p} (K_n)| = |\mathfrak{X}_{\mathfrak{p}} (K)| 
\cdot |X' (K_n)|$ holds (Proposition \ref{Bn_Dn}).
We also give applications of this proposition.

In Section \ref{subsection_FT}, we shall consider the connection with 
the invariants defined in \cite{F-T}.

In Section \ref{subsection_rank}, 
we give several criteria for the stabilization of the $p$-rank of $X (K_n)$.
One of them is the following:

\begin{thmintro}[=Theorem \ref{rank_criterion1}]
Let $n$ be a positive integer.
Assume that $i_n (A (K))$ is trivial.
Then $\rankp \mathfrak{X}_\mathfrak{p} (K_n) = \rankp X (K_n)$ 
if and only if 
$\rankp X (K_\infty) = \rankp X (K_n)$.
\end{thmintro}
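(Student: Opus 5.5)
The plan is to compare both sides of the equivalence with one Iwasawa-theoretic object: the Galois group of the maximal abelian extension of $K_n$ contained in $L(K_\infty)$. Put $\Gamma_n = \Gal(K_\infty/K_n)$ and $G_n = \Gal(L(K_\infty)'/K_n)$, where $L(K_\infty)'$ is the maximal subextension of $L(K_\infty)$ that is abelian over $K_n$. Then $G_n$ is an extension of $\Gamma_n$ by $X(K_\infty)_{\Gamma_n}$.

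By (K2) and (K4), exactly two primes $\mathfrak{p}_n$, $\mathfrak{p}'_n$ of $K_n$ ramify in $L(K_\infty)'$. Their inertia groups $I_{\mathfrak{p}}$ and $I_{\mathfrak{p}'}$ in $G_n$ each map isomorphically onto $\Gamma_n$. The standard argument (as in Iwasawa's proof of the class number formula) then gives two things.

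1. The quotient $G_n/I_{\mathfrak{p}}I_{\mathfrak{p}'}$ is $X(K_n)$. Concretely, $X(K_n)\cong X(K_\infty)/\bigl(\omega_n X(K_\infty) + \mathbb{Z}_p (\sigma-1)\bigr)$ for a suitable element $\sigma$, namely the ratio of generators of the two inertia groups.
2. The quotient $G_n/I_{\mathfrak{p}'}$ is the Galois group of an abelian $p$-extension of $K_n$ unramified outside $\mathfrak{p}_n$. So there is a natural surjection $\mathfrak{X}_\mathfrak{p}(K_n)\to G_n/I_{\mathfrak{p}'}\cong X(K_\infty)_{\Gamma_n}$.

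The first key step is to show that this surjection is an isomorphism, i.e. $M_\mathfrak{p}(K_n)\subset L(K_\infty)'$. Equivalently, for any abelian $p$-extension $N/K_n$ unramified outside $\mathfrak{p}_n$, the composite $NK_\infty/K_\infty$ should be unramified at $\mathfrak{p}_n$.

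- For this I would use (K3): $K_{\mathfrak{p}}=F_{\mathfrak{p}^0}$, so locally at $\mathfrak{p}_n$ everything takes place over $F_{n,\mathfrak{p}^0_n}$.
- I would then use (F4) together with $A(F_n)=1$: every abelian $p$-extension of $F_n$ unramified outside $p$ lies in $F_\infty$.
- Via local class field theory, the inertia image of $U_{\mathfrak{p}_n}$ in $\mathfrak{X}_\mathfrak{p}(K_n)$ should then be forced to be procyclic and to coincide with the cyclotomic direction.

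I expect this local-global comparison to be the main obstacle. The delicate point is controlling the closure of the global units of $K_n$ inside $U_{\mathfrak{p}_n}$. If the isomorphism fails in general, I would settle for showing that the kernel has trivial $p$-rank contribution, which is all the rank statement needs.

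Once the identification $\mathfrak{X}_\mathfrak{p}(K_n)\cong X(K_\infty)_{\Gamma_n}$ (or at least equality of $p$-ranks) is in hand, the statement reduces to a module-theoretic claim: $\rankp X(K_\infty)_{\Gamma_n}=\rankp X(K_n)$ holds if and only if $\rankp X(K_\infty)=\rankp X(K_n)$. The hypothesis that $i_n(A(K))$ is trivial enters here. I would use it, via the genus-theoretic formula for $|\overline{A}(K_n)^{\Gal(K_n/K)}|$ (\cite[Lemma 4.1]{Lang}), to show that the extra generator $\sigma-1$ of item 1 lies in $pX(K_\infty)+\omega_nX(K_\infty)$ exactly when it is not needed. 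Put differently, the capitulation hypothesis should identify $\rankp X(K_\infty)_{\Gamma_n}-\rankp X(K_n)$ with the defect $\rankp X(K_\infty)-\rankp X(K_n)$.

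Finally I would argue by Nakayama's lemma on $X(K_\infty)/pX(K_\infty)$ as a module over $\mathbb{F}_p[[T]]$. The equality $\rankp X(K_\infty)/(\omega_n, p, \sigma-1) = \rankp X(K_\infty)/(\omega_n,p)$ should propagate to all higher layers $m\ge n$. This gives $\rankp X(K_m)=\rankp X(K_n)$ for all $m\ge n$, hence $\rankp X(K_\infty)=\rankp X(K_n)$. The converse implication follows by reading the same chain of equalities backwards.
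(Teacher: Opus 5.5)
Your first half is sound and agrees with the paper. Writing $X=X(K_\infty)$, the identification $\mathfrak{X}_\mathfrak{p}(K_n)\cong X/\omega_nX$ is exactly what the paper uses, via Hachimori's Theorem \ref{quotient_restricted}(1) and $M_\mathfrak{p}(K_\infty)=L(K_\infty)$ (Proposition \ref{Iwasawa_modules_coincidence}). Your local sketch can be completed without any fallback. The $\mathfrak{p}_n$-inertia group of $\Gal(M_\mathfrak{p}(K_n)K_\infty/K_n)$ is the image of $\mathcal{U}_n$. Because $\mathcal{E}(F_n)\subset\mathcal{E}(K_n)$, its projection to $\Gal(M_\mathfrak{p}(K_n)/K_n)$ factors through $\mathcal{U}_n/\mathcal{E}(F_n)\cong\Gal(K_\infty/K_n)$. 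So it is the graph of a homomorphism out of $\Gal(K_\infty/K_n)$, and it meets $\Gal(M_\mathfrak{p}(K_n)K_\infty/K_\infty)$ trivially. The easy implication is also fine, since $\rankp X(K_n)\le\rankp\mathfrak{X}_\mathfrak{p}(K_n)\le\rankp X$.

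The genuine gap is in how you use $i_n(A(K))=1$, which is the whole content of the hard direction.

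\begin{itemize}
\item The genus formula for $|\overline{A}(K_n)^{\Gal(K_n/K)}|$ only counts ambiguous classes. It says nothing about where your extra generator lies modulo $pX+\omega_nX$; that generator is $\nu_na$, with $a\in X$ the quotient of the two inertia generators at level $0$. Saying it lies there ``exactly when it is not needed'' just restates the goal.
\item Read literally, your ``put differently'' sentence (equality of the two defects) would give $\rankp\mathfrak{X}_\mathfrak{p}(K_n)=\rankp X$ whenever $i_n(A(K))=1$. That is false: in Example \ref{example_rank_7}, $A(k)=0$ but $\ranktwo\mathfrak{X}_\mathfrak{p}(k_2)\le 4<7=\ranktwo X$.
\item The rank equality by itself only yields $\nu_na\in\omega_nX+pX$, and nothing propagates to higher layers from that. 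If $M_\mathfrak{p}(K)=L(K)$, then $a\in TX$, so this condition holds for every $n$ and carries no information.
\end{itemize}

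The missing idea is Iwasawa's description of capitulation. One has $X(K_n)\cong X/\nu_nY$ with $Y=TX+\mathbb{Z}_pa$ (Theorem \ref{quotient_unramified}), so $\nu_nY=\omega_nX+\mathbb{Z}_p\nu_na$. Moreover, $i_n$ is the map $X/Y\to X/\nu_nY$, $x\mapsto\nu_nx$. Hence $i_n(A(K))=1$ means $\nu_nX\subset\nu_nY$, i.e. $X(K_n)\cong X/\nu_nX$.

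Since $\mathfrak{X}_\mathfrak{p}(K_n)\cong X/\omega_nX=X/T\nu_nX$, the rank equality becomes $\nu_nX+pX=T\nu_nX+pX$. Thus $Z=(\nu_nX+pX)/pX$ satisfies $TZ=Z$, and Nakayama gives $Z=0$, i.e. $\nu_nX\subset pX$. Then $\rankp X(K_n)=\rankp X/\nu_nX=\rankp X$ follows at once, with no passage through higher layers. Without isolating the submodule $\nu_nX$, on which $T$ acts surjectively modulo $p$, your Nakayama step has nothing to act on.
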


In Section \ref{section_quadratic}, we focus on a certain family of real quadratic fields.
In the following, let $q$ be an odd prime number 
and put $k = \mathbb{Q} (\sqrt{q})$.
We apply the results of Section \ref{section_criteria} 
as $F = \mathbb{Q}$, $K =k$, and $p=2$.

There are various studies of Greenberg's conjecture for 
real quadratic fields with $p=2$.
Recently, Fukuda-Komatsu-Kumakawa-Sasaki \cite{FKKS} showed that 
Greenberg's conjecture holds all $\mathbb{Q} (\sqrt{m})$ 
in the range $m < 1,000,000$ ($m$ is not limited to prime numbers).
We will give results not only on Greenberg's conjecture but also 
on the $2$-rank of $X(k_\infty)$.

In Section \ref{subsection_cyclic}, 
we shall give ``yet another'' equivalent condition such that $X (k_\infty)$
is cyclic.
Note that an equivalent condition on the cyclicity of $X (k_\infty)$
is already obtained by Mouhib-Movahhedi \cite{Mo-Mo}.
Recently, Mizusawa-Mouhib \cite{Mi-Mo} gave another equivalent condition.
Our condition is different from theirs.
Note that our condition is derived from more general results, and 
it seems that the same type result holds in other situations.

\begin{thmintro}[=Theorem \ref{theorem_cyclicity_real_quad}]
Assume that $q$ satisfy 
$q \equiv 1 \pmod{16}$ and $2^{\frac{q-1}{4}} \equiv 1 \pmod{q}$.
Then the following hold.
\begin{itemize}
\item[(1a)] If $| \mathfrak{X}_\mathfrak{p} (k) | =2$ and
$|A (k_1)| =2$, then $\ranktwo X (k_\infty) >1$.
\item[(1b)] If $| \mathfrak{X}_\mathfrak{p} (k) | =2$ and
$|A (k_1)| >2$, then $\ranktwo X (k_\infty) =1$.
\item[(2a)] If $| \mathfrak{X}_\mathfrak{p} (k) | > 2$ and
$|A (k_1)| =2$, then $\ranktwo X (k_\infty) =1$.
\item[(2b)] If $| \mathfrak{X}_\mathfrak{p} (k) | > 2$ and
$|A (k_1)| >2$, then $\ranktwo X (k_\infty) >1$.
\end{itemize}
\end{thmintro}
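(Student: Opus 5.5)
Throughout I take $p=2$, $F=\mathbb{Q}$, $K=k=\mathbb{Q}(\sqrt{q})$. The condition $q\equiv 1 \pmod{16}$ makes $2$ split in $k$, say $2O_k=\mathfrak{p}\mathfrak{p}'$, and the assumptions of Section \ref{assumptions} hold. The plan is to reduce everything to the rank criterion, Theorem \ref{rank_criterion1}, applied with $n=1$. This first needs $i_1(A(k))=1$. Since $q$ is prime, genus theory gives that $A(k)$ is trivial (the class number of $\mathbb{Q}(\sqrt{q})$ is odd), so the hypothesis is automatic.

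The second step is to compute $\ranktwo X(k_1)$, where $k_1=\mathbb{Q}(\sqrt{2},\sqrt{q})$. Genus theory for $k_1/\mathbb{Q}(\sqrt 2)$ shows $A(k_1)$ is nontrivial and cyclic, i.e. $\ranktwo X(k_1)=1$. Here only $q$ ramifies in $k_1/\mathbb{Q}(\sqrt2)$; it splits in $\mathbb{Q}(\sqrt2)$ because $q\equiv 1\pmod 8$, and the class number of $\mathbb{Q}(\sqrt2)$ is $1$. The ambiguous class number formula then gives $2^{2-1}/[E:E\cap N]$, and the unit index should be $1$ because of the quartic condition $2^{(q-1)/4}\equiv1\pmod q$. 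So the cyclicity question $\ranktwo X(k_\infty)=1$ becomes, by Theorem \ref{rank_criterion1}, equivalent to $\ranktwo \mathfrak{X}_\mathfrak{p}(k_1)=1$.

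The heart of the proof is to compute $\ranktwo\mathfrak{X}_\mathfrak{p}(k_1)$ in terms of $|\mathfrak{X}_\mathfrak{p}(k)|$ and $|A(k_1)|$. I would use the exact sequence
\[ 1\to Z_\mathfrak{p}(k_1)\to \mathfrak{X}_\mathfrak{p}(k_1)\to X(k_1)\to 1 .\]
Here $Z_\mathfrak{p}(k_1)$ is the image of the local units at $\mathfrak{p}_1$ modulo global units. Because the local field is $\mathbb{Q}_2(\sqrt2)$, $Z_\mathfrak{p}(k_1)$ is cyclic, with order governed by the unit of $k_1$. Proposition \ref{Bn_Dn} gives $|\mathfrak{X}_\mathfrak{p}(k_1)|=|\mathfrak{X}_\mathfrak{p}(k)|\cdot|X'(k_1)|$, and $X'(k_1)$ is the quotient of $X(k_1)$ by the decomposition groups of $\mathfrak{p}_1,\mathfrak{p}'_1$.

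The rank of $\mathfrak{X}_\mathfrak{p}(k_1)$ is $1$ exactly when this extension is nonsplit in the cyclic sense, i.e. when the order-$2$ subgroup is unique. I would detect this by comparing $|\mathfrak{X}_\mathfrak{p}(k_1)|$ with the orders of the two ends. The decisive alternative in each case is as follows.
\begin{itemize}
\item If $|\mathfrak{X}_\mathfrak{p}(k)|=2$ and $|A(k_1)|=2$, then the prime $\mathfrak{p}_1$ does not generate $X(k_1)$. The sequence splits and the rank is $2$. This is case (1a).
\item If $|A(k_1)|>2$, the decomposition group of $\mathfrak{p}'_1$ is a proper subgroup. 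Hence $X'(k_1)$ is nontrivial and the parity flips. This gives cases (1b) and (2b).
\item If $|\mathfrak{X}_\mathfrak{p}(k)|>2$ and $|A(k_1)|=2$, the group is cyclic of order at least $4$ mapping onto $X(k_1)$. This is case (2a).
\end{itemize}

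The main obstacle is the precise determination of $\ranktwo\mathfrak{X}_\mathfrak{p}(k_1)$, i.e. deciding whether the extension of $X(k_1)$ by $Z_\mathfrak{p}(k_1)$ is cyclic. To settle this I would use the $\Gal(k_1/k)$-action, via $\mathfrak{X}_\mathfrak{p}(k_1)_{\Gal}\to\mathfrak{X}_\mathfrak{p}(k)$ with controlled kernel. I would also use the condition $2^{(q-1)/4}\equiv1\pmod q$ to pin down the decomposition of $\mathfrak{p}$ in the genus field and in $L(k_1)$. I expect the bookkeeping of which case gives a split extension to be the delicate point.
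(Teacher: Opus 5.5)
Your reduction is correct, and the paper's proof also runs through it implicitly. Since $A(k)=1$, Theorem \ref{rank_criterion1} at $n=1$ turns each case into the question whether $\ranktwo \mathfrak{X}_\mathfrak{p}(k_1)=1$. When that rank is $\ge 2$, the surjection $X\to\mathfrak{X}_\mathfrak{p}(k_1)$ already gives $\ranktwo X(k_\infty)>1$. But deciding $\ranktwo\mathfrak{X}_\mathfrak{p}(k_1)$ is the entire content of the theorem, and your case list does not do it.

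Here is what goes wrong in each part:
\begin{itemize}
\item Comparing the orders of $Z_\mathfrak{p}(k_1)$, $\mathfrak{X}_\mathfrak{p}(k_1)$ and $X(k_1)$ can never detect cyclicity. In (1a) the order data fit both $\mathbb{Z}/4\mathbb{Z}$ and $(\mathbb{Z}/2\mathbb{Z})^2$.
\item Your second bullet treats (1b) and (2b) by a single argument, although their conclusions are opposite. ``The parity flips'' is not a mechanism, and $|\mathfrak{X}_\mathfrak{p}(k)|$ never actually enters.
\item Your third bullet simply asserts cyclicity.
\item Your first bullet rests on $D(k_1)=1$. This is not formal but an arithmetic input, which the paper imports from Yue's work. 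It is indispensable: otherwise you only know $|\mathfrak{X}_\mathfrak{p}(k_1)|\in\{2,4\}$, and $|\mathfrak{X}_\mathfrak{p}(k_1)|=2$ would make $X(k_\infty)$ cyclic of order $2$ by Theorem \ref{finiteness_criterion1}. Even granting $|\mathfrak{X}_\mathfrak{p}(k_1)|=4$ via Proposition \ref{Bn_Dn}, ``the sequence splits'' is left unproved.
\end{itemize}

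The missing idea is the one you only gesture at in your last paragraph, but it needs both quotients, by $\gamma-1$ and by $\gamma+1$, not only coinvariants. Let $\mathfrak{X}_1=\mathfrak{X}_\mathfrak{p}(k_1)$, a module over $R=\mathbb{Z}_2[T]/(T^2+2T)$ with $\gamma=1+T$.
\begin{itemize}
\item By Theorem \ref{quotient_restricted}, $\mathfrak{X}_1/(\gamma-1)\mathfrak{X}_1\cong\mathfrak{X}_\mathfrak{p}(k)$, which is cyclic of order $2^m$ (Lemma \ref{lem_cyclic_F_n}).
\item By Theorem \ref{quotient_unramified} with $Y=X$ and $\nu_1=\gamma+1$, $\mathfrak{X}_1/(\gamma+1)\mathfrak{X}_1\cong X(k_1)$, of order $2^c$.
\item Nakayama gives $\mathfrak{X}_1\cong R/I$. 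Then $\ranktwo\mathfrak{X}_1=1$ if and only if some $a+bT\in I$ has $b\in\mathbb{Z}_2^\times$.
\item Since $\{a : a+bT\in I\}=2^m\mathbb{Z}_2$ and $\{a-2b : a+bT\in I\}=2^c\mathbb{Z}_2$, comparing the valuations of $a$ and $a-2b$ gives rank $1$ in (1b) and (2a), and rank $2$ in (2b).
\end{itemize}

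Case (1a) is not decided by these data; the trivial-action module $\mathbb{Z}/2\mathbb{Z}$ fits them. There you need $|\mathfrak{X}_1|=4$, which comes from $D(k_1)=1$ and Proposition \ref{Bn_Dn}. After that, $\mathbb{Z}/4\mathbb{Z}$ is excluded because $\gamma$ would act by $\pm1$, making one of the two quotients of order $4$. This is essentially the argument of Proposition \ref{rank_criterion_p2_2}.

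The paper handles the other cases by field-theoretic constructions instead. For (1b) it uses Proposition \ref{rank_criterion_p2}, which relies on there being exactly two primes above $\mathfrak{p}'$ in $M_\mathfrak{p}(k)$. For (2a) and (2b) it uses Proposition \ref{cyclicity_criterion1}, via Lemma \ref{unramified_subextension} and Proposition \ref{rank_criterion_exponent}.
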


We note that one can determine whether $|A (k_1)| =2$ or 
$|A (k_1)| > 2$ by observing the ideal class group of 
$\mathbb{Q} (\sqrt{2q})$ (see Remark \ref{remark_2q}).
Our condition is also used to prove the results appeared later.

We also give several results for Greenberg's conjecture in Section \ref{subsection_GC}.

\begin{thmintro}[=Corollary \ref{cor_full_rank}]
Assume that $q$ satisfies  
$q \equiv 1 \pmod{16}$ and $2^{\frac{q-1}{4}} \equiv 1 \pmod{q}$.
Assume also that $|A (k_1)|=2$.
We put $e = v_2 (q-1) -2$, where $v_2 ( \cdot )$ is the normalized additive 
$2$-adic valuation.
If $\ranktwo A(k_e) = 2^{e} - 1$, then $|X (k_\infty)| = |X (k_e)|$.
\end{thmintro}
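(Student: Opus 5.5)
We want: if $q\equiv 1 \pmod{16}$, $2^{(q-1)/4}\equiv 1\pmod q$, $|A(k_1)|=2$, and $\ranktwo A(k_e)=2^e-1$ with $e=v_2(q-1)-2$, then $|X(k_\infty)|=|X(k_e)|$. The plan is to reduce this to the finiteness criterion, Theorem \ref{finiteness_criterion1} (intro Theorem 1), applied with $n=e$. This requires two things: that $i_e(A(k))$ is trivial, and that $M_\mathfrak{p}(k_e)=L(k_e)$, i.e.\ $|\mathfrak{X}_\mathfrak{p}(k_e)|=|X(k_e)|$.

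\textbf{Triviality of $i_e(A(k))$.} The class number of $\mathbb{Q}(\sqrt{q})$ is odd, so $A(k)$ is trivial. This condition is therefore automatic.

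\textbf{Ranks.} Next I would compare $\rankp$'s. By the cyclicity theorem (Theorem \ref{theorem_cyclicity_real_quad}), case (1a) or (2a) applies according to $|\mathfrak{X}_\mathfrak{p}(k)|$. I expect the intended route is through the rank criterion, Theorem \ref{rank_criterion1}, together with Proposition \ref{Bn_Dn}: $|\mathfrak{X}_\mathfrak{p}(k_n)|=|\mathfrak{X}_\mathfrak{p}(k)|\cdot|X'(k_n)|$. The key input is an upper bound $\ranktwo \mathfrak{X}_\mathfrak{p}(k_e)\le 2^e-1$.

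To get this, note that $\mathfrak{X}_\mathfrak{p}(k_e)$ is a $\mathbb{Z}_2[\Gal(k_e/k)]$-module. Since $\mathfrak{p}$ is totally ramified in $k_\infty/k$, the module $\mathfrak{X}_\mathfrak{p}(k_\infty)$ has a generator structure coming from $\mathfrak{X}_\mathfrak{p}(k)$. The group $\mathfrak{X}_\mathfrak{p}(k)$ is essentially cyclic: it is the $\mathbb{Z}_2$-torsion of the $\mathfrak{p}$-ramified module, or equivalently the Galois group of $k_\infty M/k_\infty$ part. So $\mathfrak{X}_\mathfrak{p}(k_e)$ is generated over $\mathbb{Z}_2[\Gal(k_e/k)]$ by one element, modulo the $\mathbb{Z}_2$-extension part. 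Its 2-rank is then at most $2^e$ minus the contribution of the free $\mathbb{Z}_2$ quotient corresponding to $k_\infty/k_e$, giving
\[
\ranktwo Z_\mathfrak{p}(k_e)\le 2^e-1 .
\]
Here the hypotheses $q\equiv1 \pmod{16}$ and $2^{(q-1)/4}\equiv 1\pmod q$ fix the local degrees and $|\mathfrak{X}_\mathfrak{p}(k)|$, and they should make the exponent $e$ exactly the level at which the local unit group image in $\mathfrak{X}_\mathfrak{p}$ stabilizes. I expect this to be the main obstacle. I would try to show that $Z_\mathfrak{p}(k_e)=\Gal(M_\mathfrak{p}(k_e)/L(k_e))$ is a quotient of the principal local units $U^1_{\mathfrak{p}_e}$ modulo global units, which is cyclic as a $\Gal(k_e/k)$-module of rank at most $2^e$.

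\textbf{Concluding.} With $\ranktwo A(k_e)=2^e-1$ and the bound above, I would argue that the exact sequence
\[
0\to Z_\mathfrak{p}(k_e)\to \mathfrak{X}_\mathfrak{p}(k_e)\to X(k_e)\to 0
\]
forces $Z_\mathfrak{p}(k_e)$ to be trivial. The hypothesis $|A(k_1)|=2$ controls the base case: via case (1a) or (2a), it gives $\ranktwo X(k_\infty)$ or pins down $|\mathfrak{X}_\mathfrak{p}(k)|$, ensuring that the one $\Gal(k_e/k)$-generator of $\mathfrak{X}_\mathfrak{p}(k_e)$ already maps onto $X(k_e)$, which has maximal rank $2^e-1$. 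The point is that a cyclic $\mathbb{Z}_2[C_{2^e}]$-module killed by the norm has rank at most $2^e-1$. If the image in $X(k_e)$ attains that rank, the kernel $Z_\mathfrak{p}(k_e)$ cannot contribute new generators, and by Nakayama (comparing with $\mathfrak{X}_\mathfrak{p}(k)\cong$ its coinvariants) it vanishes. Then $M_\mathfrak{p}(k_e)=L(k_e)$, and Theorem \ref{finiteness_criterion1} gives $|X(k_\infty)|=|X(k_e)|$.
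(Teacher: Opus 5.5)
Your reduction is sound: since $A(k)$ is trivial, Theorem \ref{finiteness_criterion1} at $n=e$ reduces everything to showing that $Z_\mathfrak{p}(k_e)=\nu_e X/\omega_e X$ is trivial. The gap is that no $2$-rank argument can give this. The most you can extract is $\ranktwo \mathfrak{X}_\mathfrak{p}(k_e)=\ranktwo X(k_e)=2^e-1$. The upper bound here comes from $\ranktwo X(k_\infty)\le 2^e-1$ in Remark \ref{rank_upper_bound}. Your justification via a ``free $\mathbb{Z}_2$ quotient for $k_\infty/k_e$'' does not work, because $k_\infty/k_e$ is ramified at $\mathfrak{p}'_e$ and $\mathfrak{X}_\mathfrak{p}(k_e)$ is finite.

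Equal $2$-ranks in $0\to Z_\mathfrak{p}(k_e)\to\mathfrak{X}_\mathfrak{p}(k_e)\to X(k_e)\to 0$ only give $Z_\mathfrak{p}(k_e)\subseteq 2\,\mathfrak{X}_\mathfrak{p}(k_e)$. Nakayama needs $Z=\mathfrak{m}Z$, not $Z\subseteq \mathfrak{m}\,\mathfrak{X}_\mathfrak{p}(k_e)$. Likewise, saying $\mathfrak{X}_\mathfrak{p}(k_e)$ is killed by the norm $\nu_e$ is literally the statement $Z_\mathfrak{p}(k_e)=0$, so invoking it is circular. What your argument actually yields, via Theorem \ref{rank_criterion1}, is stabilization of the $2$-rank, not of the order.

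This is a real obstruction, not a presentational one. Take $X\cong\Lambda/(T^{2^e-1}+2)$. It is cyclic over $\Lambda$ and has $|X/TX|=2$, $|X/\omega_1X|=4$ and $|X/\nu_1X|=2$. It also has $\ranktwo X/\nu_eX=2^e-1$, since $\nu_e\equiv T^{2^e-1}\bmod 2$. So it is consistent with every piece of data your argument uses. Yet $X$ is $\mathbb{Z}_2$-free of rank $2^e-1$, so $\nu_eX\neq 0$ and $|Z_\mathfrak{p}(k_e)|=2$.

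The missing input is arithmetic, in two steps.
\begin{enumerate}
\item Pin down $|\mathfrak{X}_\mathfrak{p}(k)|$, which your proposal leaves undetermined. Since $e\ge 2$, we have $\ranktwo X(k_\infty)\ge 2^e-1\ge 3$. This excludes case (2a) of Theorem \ref{theorem_cyclicity_real_quad}, so $|\mathfrak{X}_\mathfrak{p}(k)|=2$ and (A2) holds.
\item Use Kumakawa's theorem \cite[Theorem 1.5]{Kuma25}: $\ranktwo A(k_e)=2^e-1$ forces $D(k_e)\neq 0$.
\end{enumerate}
Proposition \ref{Bn_Dn} applies, because $A(k)=0$ and the unit ranks of $k$ and $\mathbb{Q}$ differ by one. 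It gives $|\mathcal{U}_e/\mathcal{E}(k_e)|\cdot|D(k_e)|=|\mathfrak{X}_\mathfrak{p}(k)|=2$, hence $Z_\mathfrak{p}(k_e)=0$. Your appeal to Theorem \ref{finiteness_criterion1} then goes through. The paper packages exactly this step as Lemma \ref{finiteness_Dn_nontrivial}.
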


\begin{thmintro}[=Theorem \ref{theorem_second_layer}]\label{intro_theorem_second_layer}
Assume that $q$ satisfies 
$q \equiv 1 \pmod{16}$ and $2^{\frac{q-1}{4}} \equiv 1 \pmod{q}$.
Assume also that $|A (k_1)|=2$, and
\[ A (k_2) \cong \mathbb{Z} / 2 \mathbb{Z} \oplus \mathbb{Z} / 4 \mathbb{Z} \]
as an abelian group.
Then, $|X (k_\infty)|$ is finite.
\end{thmintro}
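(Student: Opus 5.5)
We deduce finiteness from Corollary \ref{cor_full_rank} when $e\le 2$, and from the finiteness criterion Theorem \ref{finiteness_criterion1} applied at the second layer in general. Throughout, $k=\mathbb{Q}(\sqrt{q})$, $p=2$, and $2$ splits in $k$ because $q\equiv 1 \pmod{8}$. The hypotheses $q\equiv 1 \pmod{16}$ and $2^{(q-1)/4}\equiv 1 \pmod q$ give $e=v_2(q-1)-2\ge 2$. In the case $e=2$, $\ranktwo A(k_2)=2=2^2-2$, which is not $2^e-1=3$, so Corollary \ref{cor_full_rank} does not apply directly. The plan is therefore to apply Theorem \ref{finiteness_criterion1} with $n=2$ and show that $|X(k_\infty)|=|X(k_2)|=8$.

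First we check that $i_2(A(k))$ is trivial. Since $q$ is prime, $A(k)$ is trivial by genus theory ($h(\mathbb{Q}(\sqrt q))$ is odd). Hence $i_n(A(k))=1$ for every $n$, and Theorem \ref{finiteness_criterion1} reduces the claim to proving $M_\mathfrak{p}(k_2)=L(k_2)$, that is, $Z_\mathfrak{p}(k_2)=1$, or equivalently $|\mathfrak{X}_\mathfrak{p}(k_2)|=|X(k_2)|=8$.

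Second, we compute $|\mathfrak{X}_\mathfrak{p}(k_2)|$ using Proposition \ref{Bn_Dn}, whose hypotheses should hold in this setting: $|\mathfrak{X}_\mathfrak{p}(k_2)|=|\mathfrak{X}_\mathfrak{p}(k)|\cdot|X'(k_2)|$. Since $|A(k_1)|=2$, case (2a) or (1a) of Theorem \ref{theorem_cyclicity_real_quad} applies, depending on $|\mathfrak{X}_\mathfrak{p}(k)|$. The structure $A(k_2)\cong \mathbb{Z}/2\oplus\mathbb{Z}/4$ has $2$-rank $2$. We expect a stabilization argument of the type in Theorem \ref{rank_criterion1} to force $\ranktwo X(k_\infty)\ge 2$, hence case (1a), so $|\mathfrak{X}_\mathfrak{p}(k)|=2$. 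It then suffices to show $|X'(k_2)|=4$, i.e. that $X'(k_2)$ is an index-$2$ subgroup of $X(k_2)$. Here $X(k_2)/X'(k_2)$ is generated by the decomposition groups of $\mathfrak{p}_2$ and $\mathfrak{p}'_2$ in $L(k_2)/k_2$, and these are swapped by $\Gal(k_2/\mathbb{Q}_2)$-conjugation (complex conjugation on $k$). We will show that these decomposition groups coincide and have order $2$. The argument uses that $|A(k_1)|=2$ is generated by the class of $\mathfrak{p}_1$, together with the norm map $A(k_2)\to A(k_1)$, which is surjective because $k_2/k_1$ is totally ramified at $\mathfrak{p}_1$. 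Its kernel is then the order-$4$ subgroup. Using the genus formula for $|A(k_2)^{\Gal(k_2/k)}|$ and $|\overline{A}(k_2)^{\Gal(k_2/k)}|$ (Lang's lemma), we identify the classes of $\mathfrak{p}_2$ and $\mathfrak{p}'_2$ and their span.

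The main obstacle is this last step: pinning down exactly the subgroup of $X(k_2)$ generated by the Frobenius/decomposition data at $\mathfrak{p}_2$ and $\mathfrak{p}'_2$, and showing it has order exactly $2$ rather than $4$. We will first try to show that the classes $[\mathfrak{p}_2]$ and $[\mathfrak{p}'_2]$ (via Artin reciprocity) are inverse to each other up to principal ideals. This would follow from $\mathfrak{p}_2\mathfrak{p}'_2$ being the extension of the prime above $2$ in $\mathbb{B}_2$, which is principal. It then remains to show that $[\mathfrak{p}_2]$ has order $2$ and not $4$ in $\mathbb{Z}/2\oplus\mathbb{Z}/4$, using that its norm $[\mathfrak{p}_1]$ is nontrivial of order $2$ and that the $\sigma$-invariant classes form the $2$-torsion given by the genus formula. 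If this fails for some configuration, the fallback is a direct comparison of $|\mathfrak{X}_\mathfrak{p}(k_2)|$ with $|\mathfrak{X}_\mathfrak{p}(k_1)|$ via the Iwasawa-module exact sequence for $Z_\mathfrak{p}$.
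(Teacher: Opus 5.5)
Your reduction is correct as far as it goes. Since $A(k)=1$ and $|\mathfrak{X}_\mathfrak{p}(k)|=2$, Theorem~\ref{finiteness_criterion1} and \eqref{eq_Bn_Dn} show that your target $M_\mathfrak{p}(k_2)=L(k_2)$ is equivalent to $D(k_2)\neq 1$, i.e.\ to $|X(k_\infty)|=|X(k_2)|=8$ (Lemma~\ref{finiteness_Dn_nontrivial}). No stabilization argument is needed for $|\mathfrak{X}_\mathfrak{p}(k)|=2$: $X(k_2)\cong X/\nu_2X$ is a non-cyclic quotient of $X$, so case (2a) of Theorem~\ref{theorem_cyclicity_real_quad} is excluded.

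The trouble is that this target is strictly stronger than the theorem, and the paper does \emph{not} prove it. For $q\equiv 17\pmod{32}$ it is Sasaki's result via cyclotomic units (Remark~\ref{rem_previous_results}). For $q\equiv 1\pmod{32}$ it is an instance of the paper's concluding conjecture (iii). Since Theorem~\ref{finiteness_criterion1} is an equivalence, your route stands or falls with this statement.

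The mechanism you propose for it rests on a false premise. $A(k_1)$ is \emph{not} generated by $c(\mathfrak{p}_1)$; in fact $D(k_1)=1$. To see this, suppose $D(k_1)=A(k_1)$. Then \eqref{eq_Bn_Dn} at $n=1$ gives $M_\mathfrak{p}(k_1)=L(k_1)$, hence $|X(k_\infty)|=|X(k_1)|=2$ by Theorem~\ref{finiteness_criterion1}, contradicting $|A(k_2)|=8$. So your claim that ``its norm $[\mathfrak{p}_1]$ is nontrivial of order $2$'' fails. The norm of the $2$-part of $c(\mathfrak{p}_2)$ is trivial, which only places that $2$-part in the order-$4$ kernel, where it may well be trivial.

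You have also mislocated the difficulty. By \eqref{eq_Bn_Dn}, $|D(k_2)|\le 2$ automatically, so the real question is order $1$ versus $2$, not $2$ versus $4$. The genus formula cannot decide it. By \eqref{equation_global_units} it only gives $|D(k_2)|=4/(E(k):\Gnorm_{2,0}E(k_2))$, which is a statement about norms of global units of $k_2$, and that is exactly the unknown. Your fallback via Proposition~\ref{inertia_subgroup_order} yields only $|Z_\mathfrak{p}(k_2)|\le |Z_\mathfrak{p}(k_1)|=2$, which does not settle it either.

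The paper avoids the issue by proving only $\lambda=0$:
\begin{itemize}
\item $\lambda\le \ranktwo X(k_\infty)=2$ by Proposition~\ref{rank_X_2}, and $\lambda\neq 1$ by Proposition~\ref{prop_A2}.
\item If $\lambda=2$, then $X$ has no nonzero finite submodule. So for the characteristic polynomial $F(T)=T^2+aT+b$ one has $|X/\omega_nX|^{-1}=\prod_{\zeta^{2^n}=1}|F(\zeta-1)|_2$.
\item $|X/\omega_0X|=2$ forces $|b|_2=1/2$, which in turn forces $|F(i-1)F(-i-1)|_2\le 1/8$.
\item Hence $|\mathfrak{X}_\mathfrak{p}(k_2)|\ge 32$, and $|X(k_2)|\ge 16$ because $|Z_\mathfrak{p}(k_2)|\le 2$. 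This contradicts $|A(k_2)|=8$.
\end{itemize}
This argument never needs to know whether $Z_\mathfrak{p}(k_2)$ is trivial, which is precisely the point your approach cannot get past.
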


For the above Theorem \ref{intro_theorem_second_layer}, 
a slightly stronger result is already known 
when $q \equiv 17 \pmod{32}$ (see Remark \ref{rem_previous_results}).
However, our method is different from previous ones.
Finally, we propose several conjectures based on the computation of $k_2$.

Although our study in Section \ref{section_quadratic} 
is limited to the case of real quadratic fields, 
the results given in Section \ref{section_criteria} are applicable for 
more general situations (e.g., complex cubic fields).
Applying these results to other cases is the future subject.

\section{General criteria for the stabilization of the order 
or the $p$-rank}\label{section_criteria}

Let $K/F$ an extension of algebraic number fields satisfying
the assumptions on $K/F$ and $p$ stated in Section \ref{assumptions}.

\subsection{Preparatory results}
We shall recall or prove several preliminary results.

\begin{definition}
Since $K_\mathfrak{p} = F_{\mathfrak{p}^0}$, we see that
$(K_n)_{\mathfrak{p}_n} = (F_n)_{\mathfrak{p}^0_n}$ for all $n$.
We denote $\mathcal{U}_n$ by the group of principal units
of $(K_n)_{\mathfrak{p}_n} = (F_n)_{\mathfrak{p}^0_n}$.
Let $E^1 (F_n)$ (resp.  $E^1 (K_n)$) be the subgroup of
$E (F_n)$ (resp. $E (K_n)$) which is congruent to 1 modulo
$\mathfrak{p}^0_n$ (resp. $\mathfrak{p}_n$).
We also denote by $\mathcal{E} (F_n)$ (resp. $\mathcal{E} (K_n)$)
the closure of $E^1 (F_n)$ (resp. $E^1 (K_n)$) in $\mathcal{U}_n$.
We note that $\mathcal{E} (F_n)$ is a subgroup
of $\mathcal{E} (K_n)$.
\end{definition}

By class field theory, we see that $Z_{\mathfrak{p}} (K_n) = 
\Gal (M_\mathfrak{p} (K_n)/L (K_n))
\cong \mathcal{U}_n / \mathcal{E}(K_n)$ (see \cite{Hachi}).

\begin{lem}\label{lem_cyclic_F_n}
Let $n$ be a non-negative integer.

\smallskip

\noindent (1)
$\mathcal{U}_n / \mathcal{E} (F_n) \cong \Gal (F_\infty / F_n)$.

\smallskip

\noindent (2)
$\mathcal{U}_n / \mathcal{E} (K_n)$ is a procyclic pro-$p$ group.
\end{lem}

\begin{proof}
It is well known that $\Gal (M_p (F_n)/ F_n)
\cong \mathcal{U}_n / \mathcal{E}(F_n)$ 
(recall that the class number of $F_n$ is prime to $p$ under our assumptions).
Since $M_p (F_\infty) = F_\infty$, there is no non-trivial
abelian $p$-extension over $F_\infty$ unramified outside $p$.
Hence, $M_p (F_n) = F_\infty$.
The assertion (1) follows from this.

By (1), we see that $\mathcal{U}_n / \mathcal{E} (F_n)$ is
a procyclic pro-$p$ group.
Since $\mathcal{U}_n / \mathcal{E} (K_n)$ is a quotient of
$\mathcal{U}_n / \mathcal{E} (F_n)$, we obtain the assertion of (2)
(see also \cite{Hachi}).
\end{proof}

We note that $\mathcal{U}_n / \mathcal{E} (K_n)$ can be infinite
(e.g., the case where $K$ is an imaginary quadratic field).
In this paper, we mainly consider the case where
$\mathcal{U}_n / \mathcal{E} (K_n)$ is finite for all $n$.

Recall that $Z_{\mathfrak{p}} (K_\infty) = 
\Gal (M_\mathfrak{p} (K_\infty) / L(K_\infty)) \cong
\varprojlim \mathcal{U}_n / \mathcal{E} (K_n)$,
where the projective limit is taken with respect to the norm maps
(\cite{Hachi}).
The following is a crucial result of our study.
This is a slight generalization of Lemma 8.2 of Hachimori \cite{Hachi}.
(Fujii \cite[Lemma 2]{Fujii} also showed the same assertion for imaginary quadratic fields with odd $p$ 
by using slightly different method from Hachimori's.
We note that a similar result in a certain situation 
also can be found in Ozaki \cite{Oza_proc}.)

\begin{prop}\label{Iwasawa_modules_coincidence}
$\varprojlim \mathcal{U}_n / \mathcal{E} (K_n)$ is trivial.
That is, $M_{\mathfrak{p}} (K_\infty) = L (K_\infty)$ holds.
\end{prop}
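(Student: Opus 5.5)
The plan is to compute the transition maps of the projective system explicitly, using the identification in Lemma \ref{lem_cyclic_F_n}(1). Each $\mathcal{U}_n/\mathcal{E}(K_n)$ is a quotient of $\mathcal{U}_n/\mathcal{E}(F_n)\cong \Gal(F_\infty/F_n)\cong\mathbb{Z}_p$. Concretely, I will show that under this identification the norm map $\mathcal{U}_m \to \mathcal{U}_n$ ($m\ge n$) becomes the natural inclusion $\Gal(F_\infty/F_m)\hookrightarrow \Gal(F_\infty/F_n)$. That inclusion is multiplication by $p^{m-n}$ on $\mathbb{Z}_p$. It then follows that every element of $\varprojlim \mathcal{U}_n/\mathcal{E}(K_n)$ is infinitely $p$-divisible in each component, which forces it to vanish.

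\textbf{Step 1.} By (K3), $(K_n)_{\mathfrak{p}_n}=(F_n)_{\mathfrak{p}^0_n}$ for every $n$. Hence the norm $\mathcal{U}_m\to\mathcal{U}_n$ used in the limit (local norm from $(K_m)_{\mathfrak{p}_m}$ to $(K_n)_{\mathfrak{p}_n}$) coincides with the local norm of $F_m/F_n$ at $\mathfrak{p}^0$. The global norm $N_{F_m/F_n}$ maps $E^1(F_m)$ into $E^1(F_n)$, and $N_{K_m/K_n}$ maps $E^1(K_m)$ into $E^1(K_n)$. Therefore the norm induces compatible maps
\[ \mathcal{U}_m/\mathcal{E}(F_m)\to\mathcal{U}_n/\mathcal{E}(F_n), \qquad \mathcal{U}_m/\mathcal{E}(K_m)\to\mathcal{U}_n/\mathcal{E}(K_n), \]
and these commute with the natural surjections $\mathcal{U}_j/\mathcal{E}(F_j)\to \mathcal{U}_j/\mathcal{E}(K_j)$.

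\textbf{Step 2 (the main point).} The isomorphism $\mathcal{U}_n/\mathcal{E}(F_n)\cong\Gal(M_p(F_n)/F_n)=\Gal(F_\infty/F_n)$ is induced by the Artin map restricted to local units at $\mathfrak{p}^0_n$. This uses that $A(F_n)$ is trivial and that $\mathfrak{p}^0_n$ is the only prime above $p$. By the norm-compatibility of the Artin reciprocity map, $\mathrm{Art}_{F_m}(x)|_{F_\infty}=\mathrm{Art}_{F_n}(N_{F_m/F_n}x)$ for $x\in\mathcal{U}_m$. So the induced norm map corresponds to the restriction $\Gal(F_\infty/F_m)\to\Gal(F_\infty/F_n)$, which is the inclusion of the subgroup of index $p^{m-n}$. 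Writing $C_n=\mathcal{U}_n/\mathcal{E}(K_n)$, Step 1 then shows that the image of $C_m\to C_n$ equals $p^{m-n}C_n$. This is the step I expect to need the most care: one must make sure the identification of local norms via (K3) is compatible with the global Artin maps.

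\textbf{Step 3.} Let $(x_n)\in\varprojlim C_n$. For each $n$ and every $m\ge n$, we have $x_n\in p^{m-n}C_n$, so $x_n\in\bigcap_k p^kC_n$. By Lemma \ref{lem_cyclic_F_n}(2), $C_n$ is procyclic pro-$p$, so it is either finite cyclic of $p$-power order or isomorphic to $\mathbb{Z}_p$. In both cases $\bigcap_k p^kC_n=0$. Thus $x_n=0$ for all $n$, and the limit is trivial. Finally, since $Z_\mathfrak{p}(K_\infty)=\Gal(M_\mathfrak{p}(K_\infty)/L(K_\infty))\cong\varprojlim \mathcal{U}_n/\mathcal{E}(K_n)$ as recalled before the statement, we obtain $M_\mathfrak{p}(K_\infty)=L(K_\infty)$.
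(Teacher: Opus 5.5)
Your proof is correct and follows the paper's route: the paper's proof simply observes that $\mathcal{U}_n/\mathcal{E}(K_n)$ is a quotient of $\mathcal{U}_n/\mathcal{E}(F_n)\cong\Gal(F_\infty/F_n)$, and that the inverse limit of the latter under norms is trivial. Your computation that the image of $C_m\to C_n$ is $p^{m-n}C_n$ makes two implicit points explicit: the passage from the trivial limit to the limit of the quotient system (in the paper this tacitly uses exactness of $\varprojlim$ on compact groups), and the norm/restriction compatibility, which the paper itself invokes later in the proof of Proposition \ref{inertia_subgroup_order}.
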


\begin{proof}
As mentioned in Lemma \ref{lem_cyclic_F_n},
$\mathcal{U}_n / \mathcal{E} (K_n)$ is a quotient of
$\mathcal{U}_n / \mathcal{E} (F_n)$.
Hence the assertion follows because
$\varprojlim \mathcal{U}_n / \mathcal{E} (F_n)$ is trivial.
\end{proof}

From the above result, we see that $\mathfrak{X}_\mathfrak{p} (K_\infty)$
is a finitely generated torsion $\mathbb{Z}_p [[ \Gal (K_\infty /K) ]]$-module
(as well as $X (K_\infty)$).
We note that $M'_\mathfrak{p} (K_\infty) = L^\dagger (K_\infty)$ also holds.

\begin{definition}\label{definition_X}
We often write the same object
$\mathfrak{X}_\mathfrak{p} (K_\infty) = X (K_\infty)$ simply as $X$.
\end{definition}

\begin{definition}\label{definition_lambda}
We fix a topological generator $\gamma$ of $\Gal (K_\infty /K)$.
We put $\Lambda = \mathbb{Z}_p [[T]]$.
Take an isomorphism
$\mathbb{Z}_p [[ \Gal (K_\infty /K) ]] \to \Lambda$ with  
$\gamma \mapsto 1+T$,
and we see that $X (K_\infty), \mathfrak{X}_\mathfrak{p} (K_\infty)$ as
$\Lambda$-modules via this isomorphism.
For every non-negative integer $n$, we define the following
elements of $\Lambda$:
\[ \omega_n = (1+T)^{p^n} -1, \qquad \nu_n = \omega_n / T. \]
\end{definition}

The following is shown in Hachimori's paper.

\begin{thm}[Hachimori \cite{Hachi}]\label{quotient_restricted}
Let $n$ be a non-negative integer.

\smallskip

\noindent (1)
$\mathfrak{X}_{\mathfrak{p}} (K_n) \cong
\mathfrak{X}_{\mathfrak{p}} (K_\infty) / \omega_n  
\mathfrak{X}_{\mathfrak{p}} (K_\infty)$.

\smallskip

\noindent (2) $\mathfrak{X}'_{\mathfrak{p}} (K_n) \cong
\mathfrak{X}'_{\mathfrak{p}} (K_\infty) / \omega_n  
\mathfrak{X}'_{\mathfrak{p}} (K_\infty)$.
\end{thm}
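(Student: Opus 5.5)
The plan is to identify the Galois-coinvariants of $\mathfrak{X}_{\mathfrak{p}}(K_\infty)$ with the Galois group of the maximal abelian extension of $K_n$ inside $M_{\mathfrak{p}}(K_\infty)$. Then I will show that this maximal abelian subextension is exactly $M_{\mathfrak{p}}(K_n)$.

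Put $G_n=\Gal(K_\infty/K_n)$, topologically generated by $\gamma^{p^n}$. Since $\mathfrak{p}$ is the only prime allowed to ramify, $M_{\mathfrak{p}}(K_\infty)/K_n$ is Galois. It is also unramified outside the primes above $\mathfrak{p}$, because $K_\infty/K_n$ is itself unramified outside $p$ and totally ramified at $\mathfrak{p}_n$ and $\mathfrak{p}'_n$. Let $\mathcal{G}=\Gal(M_{\mathfrak{p}}(K_\infty)/K_n)$. Standard group theory for a procyclic quotient gives
\[
[\mathcal{G},\mathcal{G}]=(\gamma^{p^n}-1)\mathfrak{X}_{\mathfrak{p}}(K_\infty)=\omega_n\mathfrak{X}_{\mathfrak{p}}(K_\infty).
\]
Hence the maximal abelian subextension $N_n/K_n$ of $M_{\mathfrak{p}}(K_\infty)/K_n$ satisfies $\Gal(N_n/K_\infty)\cong \mathfrak{X}_{\mathfrak{p}}(K_\infty)/\omega_n\mathfrak{X}_{\mathfrak{p}}(K_\infty)$.

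Next I compare $N_n$ with $M_{\mathfrak{p}}(K_n)$. Since $N_n\subseteq M_{\mathfrak{p}}(K_n)$, it suffices to prove the reverse inclusion. That amounts to showing $K_\infty\subseteq M_{\mathfrak{p}}(K_n)$, i.e.\ that $K_\infty/K_n$ is unramified outside $\mathfrak{p}_n$. This is the key point, and it fails at first sight because $K_\infty/K_n$ ramifies at $\mathfrak{p}'_n$. So I expect the statement to be about the quotient in a slightly different form, and the real content lies here.

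The remedy, following Hachimori, is as follows. By (K3), $\mathfrak{p}$ has $K_{\mathfrak{p}}=F_{\mathfrak{p}^0}$, and the cyclotomic tower is totally ramified at $\mathfrak{p}$. Take $\tilde N$ to be the maximal abelian pro-$p$ extension of $K_n$ unramified outside $\mathfrak{p}_n$. Its Galois group is $\mathfrak{X}_{\mathfrak{p}}(K_n)$. The composite $\tilde N K_\infty$ is unramified outside $p$ over $K_n$, and over $K_\infty$ it is unramified outside $\mathfrak{p}$. Therefore $\tilde N K_\infty\subseteq M_{\mathfrak{p}}(K_\infty)$, and it is abelian over $K_n$, so $\tilde N K_\infty\subseteq N_n$. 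Conversely, $N_n$ is abelian over $K_n$ and ramified only at $\mathfrak{p}_n$ and $\mathfrak{p}'_n$. Let $I'$ be the inertia group at $\mathfrak{p}'$ in $\Gal(N_n/K_n)$. It injects into $G_n$, since $N_n/K_\infty$ is unramified at $\mathfrak{p}'$, and it surjects onto $G_n$ by (K4). So $I'$ is a complement to $\Gal(N_n/K_\infty)$. Its fixed field is abelian over $K_n$ and unramified outside $\mathfrak{p}_n$, hence contained in $\tilde N$.

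This gives $\Gal(N_n/K_\infty)\cong\Gal(N_n/K_n)/I'\cong \Gal(\tilde N'/K_n)$ for a subfield $\tilde N'\subseteq\tilde N$. Combined with $\tilde N K_\infty\subseteq N_n$ and $\tilde N\cap K_\infty=K_n$, which holds because $K_\infty/K_n$ is totally ramified at $\mathfrak{p}'$, the two bounds force $\tilde N'=\tilde N$. This yields (1).

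For (2) I will run the same argument with the additional condition that $\mathfrak{p}'$ splits completely, using $M'_{\mathfrak{p}}(K_\infty)=L^\dagger(K_\infty)$. The decomposition group at $\mathfrak{p}'$ is then the complement. The main obstacle is this inertia-complement step, in particular checking that inertia at $\mathfrak{p}'$ over $K_\infty$ is trivial and that the procyclic $I'$ splits the extension.
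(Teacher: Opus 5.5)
Your final argument is correct. The field $N_n$ is abelian pro-$p$ over $K_n$ and unramified outside $\{\mathfrak{p}_n,\mathfrak{p}'_n\}$. The inertia group $I'$ at $\mathfrak{p}'$ meets $\Gal(N_n/K_\infty)$ trivially and maps onto $\Gal(K_\infty/K_n)$ by (K4). Hence $N_n^{I'}\subseteq M_{\mathfrak{p}}(K_n)$ with $\Gal(N_n^{I'}/K_n)\cong\mathfrak{X}_{\mathfrak{p}}(K_\infty)/\omega_n\mathfrak{X}_{\mathfrak{p}}(K_\infty)$, and in (2) the decomposition group at $\mathfrak{p}'$ does the same job. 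The paper does not prove this statement itself (it only cites Hachimori), and what you give is the standard Iwasawa-type argument for a tower in which the excluded prime $\mathfrak{p}'$ is totally ramified.

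Clean up the write-up, though: two claims that you later abandon are false and should be deleted, not retracted.
\begin{itemize}
\item $M_{\mathfrak{p}}(K_\infty)/K_n$ is not unramified outside $\mathfrak{p}$, since it already ramifies at $\mathfrak{p}'_n$ inside $K_\infty$. Only ``unramified outside $p$'' is true, and that is all you need.
\item The inclusion runs $M_{\mathfrak{p}}(K_n)\subseteq M_{\mathfrak{p}}(K_n)K_\infty\subseteq N_n$, not $N_n\subseteq M_{\mathfrak{p}}(K_n)$. The statement is an abstract isomorphism, not ``a slightly different form'', and your argument proves exactly it.
\end{itemize}

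The closing ``two bounds'' step is cleanest without any size comparison, which would need care when $\mathfrak{X}_{\mathfrak{p}}(K_n)$ is infinite (e.g.\ $K$ imaginary quadratic). Instead: $M_{\mathfrak{p}}(K_n)\subseteq N_n$ is unramified at $\mathfrak{p}'$, so $I'$ fixes it, giving $M_{\mathfrak{p}}(K_n)\subseteq N_n^{I'}$ directly.

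For (2), the one extra check is that $\mathfrak{p}'$ splits completely in $M'_{\mathfrak{p}}(K_n)K_\infty/K_\infty$, which is immediate. The appeal to $M'_{\mathfrak{p}}(K_\infty)=L^\dagger(K_\infty)$ is unnecessary.
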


From the above isomorphisms, we can obtain the following result.

\begin{cor}\label{cor_Hachimori}
(1) $M_{\mathfrak{p}} (K) = K$ if and only if
$X (k_\infty)$ is trivial.

\smallskip

\noindent (2) $M'_{\mathfrak{p}} (K) = K$ if and only if
$X' (k_\infty)$ is trivial.
\end{cor}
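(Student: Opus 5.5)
Both parts follow from Theorem \ref{quotient_restricted} at $n=0$ together with Nakayama's lemma for compact $\Lambda$-modules. (Here $k_\infty$ in the statement is read as $K_\infty$.)

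For (1), take $n=0$ in Theorem \ref{quotient_restricted}(1). Since $\omega_0 = T$, it gives
\[ \mathfrak{X}_{\mathfrak{p}} (K) \cong \mathfrak{X}_{\mathfrak{p}} (K_\infty) / T \mathfrak{X}_{\mathfrak{p}} (K_\infty). \]
By Proposition \ref{Iwasawa_modules_coincidence}, $\mathfrak{X}_{\mathfrak{p}} (K_\infty) = X (K_\infty) = X$. Hence $M_{\mathfrak{p}} (K) = K$, i.e.\ $\mathfrak{X}_{\mathfrak{p}} (K) = 0$, holds if and only if $X = TX$.

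The implication "$X$ trivial $\Rightarrow$ $X/TX$ trivial" is immediate. For the converse, I use that $X$ is a compact (profinite) $\Lambda$-module and that $T$ lies in the maximal ideal $(p,T)$ of the local ring $\Lambda$. The topological Nakayama lemma then gives: $X = TX$ implies $X = (p,T)X$, which implies $X = 0$. So (1) is proved.

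For (2), I argue in the same way with Theorem \ref{quotient_restricted}(2), which gives $\mathfrak{X}'_{\mathfrak{p}} (K) \cong \mathfrak{X}'_{\mathfrak{p}} (K_\infty)/T\,\mathfrak{X}'_{\mathfrak{p}} (K_\infty)$. The remaining step is to identify $\mathfrak{X}'_{\mathfrak{p}} (K_\infty)$ with $X' (K_\infty)$.

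The paper notes that $M'_\mathfrak{p} (K_\infty) = L^\dagger (K_\infty)$. This follows as in Proposition \ref{Iwasawa_modules_coincidence}: $\Gal (M'_\mathfrak{p}(K_\infty)/L^\dagger(K_\infty))$ is a quotient of $\varprojlim \mathcal{U}_n/\mathcal{E}(K_n)$, which is trivial. The paper also states that $L^\dagger (K_n) = L' (K_n)$ will be proved, and passing to the union over $n$ gives the same equality at the level of $K_\infty$. I take both facts as given, so $\mathfrak{X}'_{\mathfrak{p}}(K_\infty) = X'(K_\infty)$.

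The main obstacle is this identification $L^\dagger = L'$ at the infinite level, which the paper has so far only announced. If it is not yet available, I would still prove (2) with $X'(K_\infty)$ read as $\Gal(L^\dagger(K_\infty)/K_\infty)$. Nakayama's lemma then finishes the argument exactly as in (1), with $X'(K_\infty)$ compact.
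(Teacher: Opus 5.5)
Your proposal is correct and is essentially the argument the paper intends; the paper omits the proof and defers to Hachimori's argument. That argument takes $n=0$ in Theorem~\ref{quotient_restricted}, identifies $\mathfrak{X}_\mathfrak{p}(K_\infty)$ with $X(K_\infty)$ via Proposition~\ref{Iwasawa_modules_coincidence} (and $\mathfrak{X}'_\mathfrak{p}(K_\infty)$ with $X'(K_\infty)$ via $M'_\mathfrak{p}(K_\infty)=L^\dagger(K_\infty)=L'(K_\infty)$), and concludes with topological Nakayama's lemma as in the proof of Theorem~\ref{finiteness_criterion1}; the identification $L^\dagger=L'$ you were worried about is supplied by Lemma~\ref{D_n_generator} independently of this corollary, so there is no circularity.
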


For the above corollary, Hachimori \cite[Theorem 8.1]{Hachi} showed
the assertion (1) for the case where $F = \mathbb{Q}$.
The above corollary gives slightly generalized results including the
case of $X' (k_\infty)$.
However, these can be shown by using the same argument as given
in \cite{Hachi}, hence we omit the proof.

We recall that $L^\dagger (K_n)$ 
(resp. $L' (K_n)$) is the maximal unramified abelian $p$-extension
such that only $\mathfrak{p}'_n$ splits completely 
(resp. both $\mathfrak{p}_n$, $\mathfrak{p}'_n$ split completely).
However, we can see that $L' (K_n) = L^\dagger (K_n)$.
This follows from the below result.

\begin{lem}\label{D_n_generator}
Fix a non-negative integer $n$.
Let $c(\mathfrak{p}'_n)$ (resp. $c(\mathfrak{p}'_n)$) be the ideal class
of $K_n$ which includes $\mathfrak{p}_n$ (resp. $\mathfrak{p}'_n$).
Then
\[ A (K_n) \cap \langle c(\mathfrak{p}_n), c(\mathfrak{p}'_n) \rangle =
A (K_n) \cap \langle c(\mathfrak{p}'_n) \rangle. \]
\end{lem}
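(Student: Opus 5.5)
The idea is to compare the classes of $\mathfrak{p}_n$ and $\mathfrak{p}'_n$ through the prime $\mathfrak{p}^0_n$ of $F_n$. Its extension to $K_n$ is a product of these two primes, and its class has order prime to $p$. (I read the statement's first $c(\mathfrak{p}'_n)$ as the class of $\mathfrak{p}_n$, which is evidently what is meant.) The inclusion "$\supseteq$" is trivial, so only "$\subseteq$" needs proof.

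\emph{Step 1: factor $\mathfrak{p}^0_n O_{K_n}$.} By (F2), (F3), (K2) and (K4), the only primes of $K_n$ above $p$ are $\mathfrak{p}_n$ and $\mathfrak{p}'_n$, so
\[ \mathfrak{p}^0_n O_{K_n} = \mathfrak{p}_n^{e}\,(\mathfrak{p}'_n)^{e'} \]
for some positive integers $e, e'$. By (K3), $(K_n)_{\mathfrak{p}_n} = (F_n)_{\mathfrak{p}^0_n}$, as noted in the Definition above. Hence the local degree of $\mathfrak{p}_n$ over $\mathfrak{p}^0_n$ is $1$, and in particular $e = 1$. Thus $\mathfrak{p}^0_n O_{K_n} = \mathfrak{p}_n (\mathfrak{p}'_n)^{e'}$.

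\emph{Step 2: use that $p$ does not divide the class number of $F_n$.} Under our assumptions $A(F_n)$ is trivial, as remarked after the list of assumptions. So there is an integer $h$ prime to $p$ (for instance the class number of $F_n$) such that $(\mathfrak{p}^0_n)^h$ is principal in $F_n$, hence also in $K_n$. Writing $c(\cdot)$ for ideal classes in $K_n$, this gives
\[ c(\mathfrak{p}_n)^h = c(\mathfrak{p}'_n)^{-e'h}. \]

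\emph{Step 3: conclude.} Let $x = c(\mathfrak{p}_n)^a c(\mathfrak{p}'_n)^b$ lie in $A(K_n)$, say of order $p^m$. Then
\[ x^h = c(\mathfrak{p}'_n)^{(b - e'a)h} \in \langle c(\mathfrak{p}'_n) \rangle. \]
Choose $h'$ with $hh' \equiv 1 \pmod{p^m}$. Then $x = (x^h)^{h'} \in \langle c(\mathfrak{p}'_n) \rangle$, and $x$ lies in $A(K_n)$ by assumption, which proves the inclusion.

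The only point needing care is Step 1: checking that the completion equality really forces $e(\mathfrak{p}_n/\mathfrak{p}^0_n) = 1$, and that no other primes appear in the factorization. Both follow from (K3), (K4) and the uniqueness of $\mathfrak{p}_n$ and $\mathfrak{p}'_n$. Consequently, an unramified abelian $p$-extension of $K_n$ in which $\mathfrak{p}'_n$ splits completely also has $\mathfrak{p}_n$ split completely, which gives $L^\dagger(K_n) = L'(K_n)$.
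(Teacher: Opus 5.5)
Your proof is correct and follows the paper's argument. The paper also factors $\mathfrak{p}^0_n O_{K_n} = \mathfrak{p}_n(\mathfrak{p}'_n)^{d}$, uses that its class has order prime to $p$ to express a prime-to-$p$ power of $c(\mathfrak{p}_n)$ through $c(\mathfrak{p}'_n)$, and then removes that exponent on the $p$-part. The differences are cosmetic: you take the exponent to be the class number of $F_n$ rather than the prime-to-$p$ part of the class number of $K_n$, and you explicitly justify, via (K3), the exponent $1$ on $\mathfrak{p}_n$ that the paper leaves implicit.
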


\begin{proof}
Recall that $\mathfrak{p}^0_n$ is the unique prime of $F_n$ lying above $p$.
Then, $\mathfrak{p}^0_n$ is decomposed in $K_n$ as follows:
\[ \mathfrak{p}^0_n O_{K_n} = \mathfrak{p}_n (\mathfrak{p}'_n)^{d}, \]
where $d$ is a positive integer.
Let $c(\mathfrak{p}^0_n O_{K_n})$ be the ideal class of $K_n$
which includes $\mathfrak{p}^0_n O_{K_n}$.
Since the class number of $F_n$ is prime to $p$,
the order of $c(\mathfrak{p}^0_n O_{K_n})$ is also prime to $p$.
Let $e$ be the largest divisor of the class number of $K_n$ 
which is prime to $p$, 
then $c(\mathfrak{p}^0_n O_{K_n})^e$ is trivial.
From these results, we see that 
\[ c(\mathfrak{p}_n)^{e} = c (\mathfrak{p}'_n)^{-d e} \]
in the ideal class group of $K_n$.
Since $c(\mathfrak{p}_n)^{e}$ and $c(\mathfrak{p}'_n)^{e}$
generates $A (K_n) \cap \langle c(\mathfrak{p}_n), c(\mathfrak{p}'_n) \rangle$,
the above equality shows our assertion.
\end{proof}

Hence, we also see that $L' (K_\infty)$ is the inertia field of 
$M_\mathfrak{p} (K_\infty) / K_\infty$ for the prime lying above $\mathfrak{p}'$.

\begin{definition}\label{definition_D_n}
We put
\[ D (K_n) = A (K_n) \cap \langle c(\mathfrak{p}_n), c(\mathfrak{p}'_n) \rangle
(= A (K_n) \cap \langle c(\mathfrak{p}'_n) \rangle) \]
and $A' (K_n) = A (K_n)/ D (K_n)$.
\end{definition}

The following is widely known.

\begin{thm}[cf. \cite{Iwa73}]\label{quotient_unramified}
Let $n$ be a non-negative integer.

\smallskip

\noindent (1) $X (K_n) \cong
X (K_\infty) / \nu_n Y$, where $Y = \Gal (L (K_\infty) / L (K) K_\infty)$.

\smallskip

\noindent (2) $X' (K_n) \cong
X' (K_\infty) / \nu_n Y'$, where $Y' = \Gal (L' (K_\infty) / L' (K) K_\infty)$.
\end{thm}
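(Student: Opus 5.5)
The plan is to run Iwasawa's standard argument (cf. \cite{Iwa73}), checking where our hypotheses are used. Let $\tilde L$ denote $L(K_\infty)$ in case (1) and $L'(K_\infty)$ in case (2). We consider $G=\Gal(\tilde L/K)$; it is abelian-by-procyclic, with $X(K_\infty)$ (resp.\ $X'(K_\infty)$) as a closed normal subgroup and $G/X(K_\infty)\cong\Gal(K_\infty/K)$. By (K2) and (K4) there are exactly two primes of $K$ above $p$, and both are totally ramified in $K_\infty$; all other primes are unramified in $\tilde L/K$, since $K_\infty/K$ is unramified outside $p$.

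First we fix inertia groups. Choose primes $\tilde{\mathfrak p},\tilde{\mathfrak p}'$ of $\tilde L$ above $\mathfrak p,\mathfrak p'$, with inertia groups $I,I'\subset G$. Total ramification in $K_\infty$ gives $I\cap X=I'\cap X=1$ and $I\cong I'\cong\Gal(K_\infty/K)$, because $\tilde L/K_\infty$ is unramified. Hence $G=I\ltimes X=I'\ltimes X$. Let $\sigma\in I$ map to $\gamma$, and write $\sigma'=a\sigma$ with $a\in X$ for the generator of $I'$ mapping to $\gamma$.

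Next we identify the maximal unramified abelian subextension over $K$. It is $L(K)$ in case (1), and in case (2) it is $L'(K)$, once we know that $\mathfrak p,\mathfrak p'$ split completely in it. The commutator subgroup of $G$ is $[G,G]=TX=(\gamma-1)X$. The subgroup generated by the inertia groups is the closure of $\langle I,I',TX\rangle$, which equals $I\cdot(\overline{\langle a\rangle}+TX)$. Thus $\Gal(\tilde L/L(K))=TX+\mathbb Z_p a$ in $X$, i.e.\ the quotient of $X$ by $\mathbb Z_pa+TX$ is $X(K)$. We then set $Y=TX+\mathbb Z_pa$, which we check equals $\Gal(\tilde L/L(K)K_\infty)$.

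For level $n$ we replace $\gamma$ by $\gamma^{p^n}$, with $\sigma^{p^n}$ and $(a\sigma)^{p^n}$. Using $(a\sigma)^{p^n}=\nu_n(a)\,\sigma^{p^n}$, computed in the semidirect product with $\sigma x\sigma^{-1}=(1+T)x$, the same argument gives $\Gal(\tilde L/L(K_n))=\omega_nX+\mathbb Z_p\nu_na=\nu_n(TX+\mathbb Z_pa)=\nu_nY$. Since $\tilde L/K_n$ is again abelian-by-procyclic with the same two totally ramified primes, this yields $X(K_n)\cong X/\nu_nY$.

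In case (2) we must also impose splitting of decomposition groups, not just inertia. For this we use that $L'(K_\infty)$ is the fixed field in which the primes above $\mathfrak p,\mathfrak p'$ split completely, so in $\tilde L/K_\infty$ the decomposition groups are trivial. Each decomposition group in $G$ then coincides with its inertia group, being procyclic and mapping isomorphically onto the Galois group of $K_\infty/K$. Consequently, the maximal unramified abelian subextension of $\tilde L/K_n$ has $\mathfrak p_n,\mathfrak p'_n$ split completely, and it contains $L'(K_n)$ because $L'(K_n)K_\infty\subset L'(K_\infty)$. We expect this verification to be the main obstacle: one has to check that $L'(K_n)K_\infty\subseteq L'(K_\infty)$, and that the maximal abelian unramified subextension of $L'(K_\infty)/K_n$ is exactly $L'(K_n)$. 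The key fact is that the primes above $p$ are totally ramified, hence non-split, in $K_\infty/K_n$. Lemma \ref{D_n_generator} ensures that this is compatible with the definition via $L^\dagger$.
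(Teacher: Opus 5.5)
Your proposal is correct. It is the standard Iwasawa argument that the paper invokes by citation to \cite{Iwa73}, without giving a proof of its own: you write $G=I\ltimes X$ using the totally ramified primes, identify $[G,G]=TX$ and $\Gal(\tilde L/L(K)K_\infty)=TX+\mathbb{Z}_p a$, and pass to level $n$ via $(a\sigma)^{p^n}=\nu_n(a)\,\sigma^{p^n}$. For (2), the points you isolate are exactly what is needed: decomposition groups equal inertia groups in $\Gal(L'(K_\infty)/K)$, and $L'(K_n)K_\infty\subseteq L'(K_\infty)$. One small wording fix: $\Gal(\tilde L/L(K_n))$ is $\overline{\langle\sigma^{p^n}\rangle}\cdot\nu_nY$, not $\nu_nY$ itself; its intersection with $X$ is $\nu_nY$, as you implicitly use.
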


We note that $| \mathfrak{X}_\mathfrak{p} (K) |$ relates to the 
order of the $\Gal (K_n / K)$-invariant subgroup of $A (K_n)$.

\begin{lem}\label{finiteness_B_n}
Assume that $| \mathfrak{X}_\mathfrak{p} (K) |$ is finite.
We put $G_n = \Gal (K_n/K)$.
If $n$ is sufficiently large, then
$| A (K_n)^{G_n}| = | \mathfrak{X}_\mathfrak{p} (K) |$.
\end{lem}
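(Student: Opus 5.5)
The plan is to compute $|A(K_n)^{G_n}|$ with Chevalley's ambiguous class number formula and then identify the unit-index term with $|\mathcal{U}_0/\mathcal{E}(K)|$.

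\textbf{Step 1: Chevalley's formula.} By (K4) and (K1), the extension $K_n/K$ is cyclic of degree $p^n$. It is ramified exactly at $\mathfrak{p}$ and $\mathfrak{p}'$, each totally ramified. Since $p$-power degree extensions do not ramify at infinite places, the formula reads
\[ |A(K_n)^{G_n}| = \frac{|A(K)|\cdot p^{2n}}{p^n \cdot [E(K): E(K)\cap N(K_n^\times)]}. \]
Here the prime-to-$p$ part of the class group is harmless, since everything is $p$-primary.

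\textbf{Step 2: norm residue symbols.} By Hasse's norm theorem for the cyclic extension $K_n/K$, a unit is a norm from $K_n^\times$ iff it is a local norm everywhere. Away from $\mathfrak{p}$ and $\mathfrak{p}'$ the extension is unramified, so units are automatically local norms there. The product formula for the Artin symbol then shows that the condition at $\mathfrak{p}'$ follows from the condition at $\mathfrak{p}$. Hence $E(K)/(E(K)\cap N(K_n^\times))$ is isomorphic to the image of $E(K)$ under the local norm residue map
\[ K_\mathfrak{p}^\times \to \Gal((K_n)_{\mathfrak{p}_n}/K_\mathfrak{p}) \cong \mathbb{Z}/p^n. \]
By (K3) this local extension is the $n$-th layer of the cyclotomic $\mathbb{Z}_p$-extension of $F_{\mathfrak{p}^0}$. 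Its norm group contains the torsion of $\mathcal{U}_0$ and the uniformizer-related parts, so the symbol factors through $\mathcal{U}_0$ modulo its torsion, onto $\mathbb{Z}_p$ modulo $p^n$.

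\textbf{Step 3: identifying the index (main obstacle).} The norm residue map $\phi\colon \mathcal{U}_0 \to \Gal(F_\infty/F)\cong\mathbb{Z}_p$ is the reciprocity map. By Lemma \ref{lem_cyclic_F_n}(1) it is surjective with kernel $\mathcal{E}(F)$. Therefore
\[ \phi(\mathcal{E}(K)) \cong \mathcal{E}(K)/\mathcal{E}(F). \]
This group is the kernel of $\mathcal{U}_0/\mathcal{E}(F)\to \mathcal{U}_0/\mathcal{E}(K)$, so its index in $\mathbb{Z}_p$ is $|\mathcal{U}_0/\mathcal{E}(K)| = |Z_\mathfrak{p}(K)|$. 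Finiteness of $|\mathfrak{X}_\mathfrak{p}(K)|$ makes this index finite, say $p^a$. Passing from $E^1(K)$ to $E(K)$ changes nothing, since units not congruent to $1$ differ by prime-to-$p$ torsion, which maps trivially. The image of $E(K)$ in $\mathbb{Z}/p^n$ is then $p^a\mathbb{Z}_p/p^n\mathbb{Z}_p$ as soon as $n\ge a$, so the unit index equals $p^{n-a}$. The delicate points are checking that the symbol on global units factors exactly like this, and handling the closure: the image of a dense subgroup modulo $p^n$ agrees with the image of its closure.

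\textbf{Step 4: conclusion.} For $n\ge a$, substituting into Step 1 gives
\[ |A(K_n)^{G_n}| = \frac{|A(K)|\, p^{2n}}{p^n\, p^{n-a}} = |A(K)|\cdot |Z_\mathfrak{p}(K)|. \]
Since $Z_\mathfrak{p}(K)=\Gal(M_\mathfrak{p}(K)/L(K))$ and $\Gal(L(K)/K)\cong A(K)$, this product equals $|\mathfrak{X}_\mathfrak{p}(K)|$, which proves the lemma.
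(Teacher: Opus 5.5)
\textbf{Verdict: correct, by a different route.} Your argument is correct, but it is not the route the paper takes.

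\textbf{The paper's route.} The paper never computes a unit index. It writes $|A(K_n)^{G_n}|=|X(K_n)_{G_n}|$. It then uses total ramification of $\mathfrak{p}'$ in $K_n/K$ to get $\mathfrak{X}_\mathfrak{p}(K_n)_{G_n}\cong\mathfrak{X}_\mathfrak{p}(K)$; the corresponding field is $M_\mathfrak{p}(K)K_n$. Finally, Proposition \ref{Iwasawa_modules_coincidence} ($M_\mathfrak{p}(K_\infty)=L(K_\infty)$) shows that the finite extension $M_\mathfrak{p}(K)K_n/K_n$ is unramified for $n\gg 0$. Hence the quotient $X(K_n)_{G_n}$ of $\mathfrak{X}_\mathfrak{p}(K_n)_{G_n}$ has full order.

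\textbf{Your route.} You use Chevalley's formula, then Hasse plus the product formula to reduce to the symbol at $\mathfrak{p}$. The key step identifies the image of $E(K)$ with $\mathcal{E}(K)/\mathcal{E}(F)$ inside $\mathcal{U}_0/\mathcal{E}(F)\cong\Gal(F_\infty/F)$. This is the same machinery the paper itself uses later in Lemma \ref{unramified_subextension}, where it serves only to show the unit index is $1$ for $n\le m$.

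\textbf{Trade-offs.} Your route needs more class field theory input. It requires the isomorphism of Lemma \ref{lem_cyclic_F_n}(1) to be induced by local reciprocity at $\mathfrak{p}^0$. It also needs the density argument, which works because a continuous map with open kernel sees the same image on $E^1(K)$ as on its closure $\mathcal{E}(K)$. In return it yields more. You get the exact formula $|A(K_n)^{G_n}|=|A(K)|\cdot p^{\min(n,a)}$ for every $n$, where $p^a=|Z_\mathfrak{p}(K)|$. So the paper's ``sufficiently large'' becomes the explicit threshold $n\ge a$, and the $n\le a$ case is exactly the count in Lemma \ref{unramified_subextension}. The paper's proof is shorter and purely Galois-theoretic, but it leans on the Iwasawa-theoretic Proposition \ref{Iwasawa_modules_coincidence} and gives no bound.

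\textbf{One justification to repair.} Your claim that ``$p$-power degree extensions do not ramify at infinite places'' is false for $p=2$; for example $\mathbb{Q}(i)/\mathbb{Q}$ ramifies at the real place. This matters because the paper applies the lemma with $p=2$ in Section \ref{section_quadratic}. The correct reason is that $K_n\subset K\mathbb{B}_\infty$ with $\mathbb{B}_\infty$ totally real, so every real place of $K$ stays real in $K_n$. With that, the numerator $p^{2n}$ in Step 1 is right. The same fact is what makes units local norms at the archimedean places in Step 2.
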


\begin{proof}
In this proof, we denote by $\mathcal{X}_{G_n}$ the
$G_n$-coinvariant quotient of $\mathcal{X}$.
First we note that
\[ |A (K_n)^{G_n} | = |A (K_n)_{G_n} | = |X (K_n)_{G_n}|. \]

Recall that there are only two primes $\mathfrak{p}, \mathfrak{p}'$
are ramified in $K_n /K$, and $\mathfrak{p}'$ is totally ramified.
From this, we can see that $\mathfrak{X}_\mathfrak{p} (K_n)_{G_n}$ is
isomorphic to $\mathfrak{X}_\mathfrak{p} (K)$.

By Proposition \ref{Iwasawa_modules_coincidence},
$M_\mathfrak{p} (K) K_n /K_n$ must be unramified
if $n$ is sufficiently large.
Since $X (K_n)_{G_n}$ is a quotient of
$\mathfrak{X}_\mathfrak{p} (K_n)_{G_n}$,
we see that
\[ |X (K_n)_{G_n}| = |\mathfrak{X}_\mathfrak{p} (K_n)_{G_n}| \]
if $n$ is sufficiently large.

By combining the above equations, the assertion follows.
\end{proof}

\subsection{Criteria for the finiteness of $|X (K_\infty)|$}\label{subsection_finiteness}
In this subsection, we shall show several criteria for 
$X (K_\infty)$ being finite.

\begin{lem}\label{if_part_finite}
Assume that $M_{\mathfrak{p}} (K_n) \neq L (K_n)$
for a non-negative integer $n$.
Then $|X (K_\infty)| > | X (K_n) |$.
\end{lem}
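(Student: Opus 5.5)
The plan is to compare orders via Hachimori's quotient description (Theorem \ref{quotient_restricted}) together with the coincidence $\mathfrak{X}_\mathfrak{p}(K_\infty)=X(K_\infty)=X$ from Proposition \ref{Iwasawa_modules_coincidence}.

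First, if $X$ is infinite there is nothing to prove, because $X(K_n)$ is finite. So assume $|X|<\infty$. By Theorem \ref{quotient_restricted}(1) and Definition \ref{definition_X},
\[ \mathfrak{X}_\mathfrak{p}(K_n)\cong X/\omega_n X , \]
hence $|\mathfrak{X}_\mathfrak{p}(K_n)|\le |X|$.

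Next, $X(K_n)=\Gal(L(K_n)/K_n)$ is the quotient $\mathfrak{X}_\mathfrak{p}(K_n)/Z_\mathfrak{p}(K_n)$. The hypothesis $M_\mathfrak{p}(K_n)\neq L(K_n)$ says exactly that $Z_\mathfrak{p}(K_n)$ is nontrivial. Since everything is finite here, this gives the strict inequality
\[ |X(K_n)|<|\mathfrak{X}_\mathfrak{p}(K_n)|\le |X| . \]

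The only point needing care is finiteness. If $X$ is finite, then $\mathfrak{X}_\mathfrak{p}(K_n)$, being a quotient of $X$, is finite as well, so $Z_\mathfrak{p}(K_n)$ is finite and nontrivial and the strict inequality of orders is legitimate. If $Z_\mathfrak{p}(K_n)$ were infinite, then $\mathfrak{X}_\mathfrak{p}(K_n)$ would be infinite, so $X$ would be infinite too; this case is already covered by the first step. I expect no real obstacle: the argument is just bookkeeping once Theorem \ref{quotient_restricted} and Proposition \ref{Iwasawa_modules_coincidence} are in hand.
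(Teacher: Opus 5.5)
Your proposal is correct and is essentially the paper's own argument, namely the chain $|X(K_n)| < |\mathfrak{X}_\mathfrak{p}(K_n)| \le |\mathfrak{X}_\mathfrak{p}(K_\infty)| = |X(K_\infty)|$. Your separate treatment of the case where $X$ is infinite simply makes explicit the finiteness bookkeeping that the paper leaves implicit.
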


\begin{proof}
By the assumption, we obtain the following:
\[ |X (K_n)| < | \mathfrak{X}_\mathfrak{p} (K_n) |
\leq | \mathfrak{X}_\mathfrak{p} (K_\infty) |
= | X (K_\infty) | . \]
\end{proof}

Hence, if $|X (K_\infty)|$ is finite, then
$M_{\mathfrak{p}} (K_n) = L (K_n)$ is satisfied for all
sufficiently large $n$.
In the following, we shall consider the converse of this.

\begin{thm}\label{finiteness_criterion1}
Let $n$ be a non-negative integer.
Assume that $i_n (A (K))$ is trivial.
Then $M_{\mathfrak{p}} (K_n) = L (K_n)$ if and only if
$|X (K_\infty)| = |X (K_n)|$.
\end{thm}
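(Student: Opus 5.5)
The ``if'' direction is already contained in Lemma \ref{if_part_finite}. If $M_{\mathfrak{p}} (K_n) \neq L (K_n)$, that lemma gives $|X (K_\infty)| > |X (K_n)|$. So $|X (K_\infty)| = |X (K_n)|$ forces $M_{\mathfrak{p}} (K_n) = L (K_n)$.

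For the ``only if'' direction, assume $M_{\mathfrak{p}} (K_n) = L (K_n)$, so that $\mathfrak{X}_\mathfrak{p} (K_n) = X (K_n)$. I would compare the two descriptions of this group as quotients of $X = \mathfrak{X}_\mathfrak{p} (K_\infty) = X (K_\infty)$ (Definition \ref{definition_X}). Theorem \ref{quotient_restricted}(1) gives $X (K_n) \cong X / \omega_n X$. Theorem \ref{quotient_unramified}(1) gives $X (K_n) \cong X / \nu_n Y$. Hence $X/\omega_n X$ and $X/\nu_n Y$ are finite of the same order.

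Next I would check the inclusions $\omega_n X \subset \nu_n Y \subset \nu_n X$. The second is trivial. For the first, write $\omega_n = \nu_n T$ and use $TX \subset Y$: the quotient $X/Y \cong X(K)$ is a quotient of $X$ on which $\Gal (K_\infty /K)$ acts trivially, so $(\gamma - 1)X$ lies in $Y$. Since the finite quotients by $\omega_n X$ and by $\nu_n Y$ have equal order, we get $\omega_n X = \nu_n Y$.

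Now the hypothesis that $i_n (A (K))$ is trivial enters. On $X (K_n) \cong A (K_n)$, the action of $\nu_n = \sum_{\sigma \in G_n} \sigma$ is the composite of the norm map $A(K_n) \to A(K)$ with $i_n$. It is therefore zero, so $\nu_n$ annihilates $X/\nu_n Y$, i.e.\ $\nu_n X \subset \nu_n Y = \omega_n X$. Combined with the inclusions above, this gives $\nu_n X = \omega_n X = T(\nu_n X)$. Since $\nu_n X$ is a compact (finitely generated) $\Lambda$-module and $T$ lies in the maximal ideal of $\Lambda$, Nakayama's lemma yields $\nu_n X = 0$. Hence also $\nu_n Y = 0$, and $X = X/\nu_n Y \cong X (K_n)$. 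This proves $|X (K_\infty)| = |X (K_n)|$.

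The step requiring most care is the identification of $\nu_n$ on $X(K_n)$ with $i_n \circ$ norm, compatibly with the isomorphism of Theorem \ref{quotient_unramified}. This is the standard correspondence between the group-ring norm element and ``extension of the norm ideal''. The inclusion $TX \subset Y$ also needs a brief justification, but it is routine.
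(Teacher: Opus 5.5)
Your proof is correct and follows essentially the same argument as the paper. Both use Theorem~\ref{quotient_restricted}(1) and Theorem~\ref{quotient_unramified}(1) to compare $\omega_n X$ with $\nu_n Y$, the triviality of $i_n$ to get $\nu_n X \subset \nu_n Y$, and Nakayama's lemma applied to $\nu_n X = T(\nu_n X)$. The only difference is the order of the steps: the paper first uses $i_n$ to get $\nu_n X = \nu_n Y$ and then compares with $\omega_n X$, so it needs only the trivial inclusion $\omega_n X \subset \nu_n X$ rather than your inclusion $TX \subset Y$.
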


\begin{proof}
The ``only if'' part follows from Lemma \ref{if_part_finite}.
Hence we shall show the ``if'' part.

We recall that $X$ denotes the same object
$\mathfrak{X}_\mathfrak{p} (K_\infty) = X (K_\infty)$.
We note that $i_n$ is also expressed as the following from.
\[ i_n : X/ Y \to X / \nu_n Y, \quad x \to \nu_n x \]
(see, e.g., \cite{Iwa73}).
Hence, if $i_n (A (K))$ is trivial, then $\nu_n X$ is contained in
$\nu_n Y$.
This implies that $\nu_n X = \nu_n Y$, and hence
$X (K_n) \cong X /\nu_n X$.

Assume that $M_{\mathfrak{p}} (K_n) = L (K_n)$.
By Theorem \ref{quotient_restricted},
we obtain that  
\[ \mathfrak{X}_\mathfrak{p} (K_n) \cong X / \omega_n X. \]
Hence, we see that $\nu_n X = \omega_n X$.
This also implies that $\nu_n X = T (\nu_n X)$.
From this, by using topological Nakayama's lemma,
we can conclude that $\nu_n X$ is trivial.
Hence $X = X (K_\infty) \cong X (K_n)$.
\end{proof}

In particular, the above theorem is applicable for the 
case where $A (K)$ is trivial.

\begin{thm}\label{split_finiteness_criterion}
Let $n$ be a non-negative integer.
We denote by $i'_n : A' (K) \to A' (K_n)$ the map induced from 
the extension of ideals.
Assume that $i'_n (A' (K))$ is trivial.
Then $M'_{\mathfrak{p}} (K_n) = L' (K_n)$ if and only if
$X' (K_\infty) \cong X' (K_n)$.
\end{thm}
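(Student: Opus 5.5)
We mirror the proof of Theorem \ref{finiteness_criterion1}, replacing $X$, $\mathfrak{X}_\mathfrak{p}$ and $Y$ by their ``split'' counterparts. Write $X' = X'(K_\infty)$. First, $M'_\mathfrak{p}(K_\infty) = L^\dagger(K_\infty) = L'(K_\infty)$ by the remark after Proposition \ref{Iwasawa_modules_coincidence} together with Lemma \ref{D_n_generator}. Hence $\mathfrak{X}'_\mathfrak{p}(K_\infty) = X'$. Theorem \ref{quotient_restricted} (2) then gives $\mathfrak{X}'_\mathfrak{p}(K_n) \cong X'/\omega_n X'$. Theorem \ref{quotient_unramified} (2) gives $X'(K_n) \cong X'/\nu_n Y'$ with $Y' = \Gal(L'(K_\infty)/L'(K)K_\infty)$, and in particular $X'(K) = A'(K) \cong X'/Y'$.

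For the ``only if'' direction, suppose $M'_\mathfrak{p}(K_n) \neq L'(K_n)$. Then
\[ |X'(K_n)| < |\mathfrak{X}'_\mathfrak{p}(K_n)| \le |\mathfrak{X}'_\mathfrak{p}(K_\infty)| = |X'(K_\infty)|, \]
exactly as in Lemma \ref{if_part_finite}. So $X'(K_\infty) \not\cong X'(K_n)$; this works whether or not $X'(K_\infty)$ is finite, since $X'(K_n)$ is then a proper quotient of a module of strictly larger order.

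For the ``if'' direction we need the analogue of the description of $i_n$. We claim that $i'_n$ corresponds to
\[ X'/Y' \to X'/\nu_n Y', \quad x \mapsto \nu_n x. \]
This is the standard transfer/norm description from \cite{Iwa73}, applied to the extension $L'(K_\infty)/K_\infty$. It is legitimate because $L'(K_n) = L'(K_\infty)^{\nu_n Y'}$, which follows from (K4): in $K_\infty/K_n$ every prime above $p$ is totally ramified and only those primes ramify, so the usual argument with inertia generators runs verbatim once one works with the quotients in which $\mathfrak{p}$ and $\mathfrak{p}'$ split completely. Given the claim, the hypothesis that $i'_n(A'(K))$ is trivial gives $\nu_n X' \subset \nu_n Y'$, hence $\nu_n X' = \nu_n Y'$ and $X'(K_n) \cong X'/\nu_n X'$.

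Now assume $M'_\mathfrak{p}(K_n) = L'(K_n)$. Comparing $\mathfrak{X}'_\mathfrak{p}(K_n) \cong X'/\omega_n X'$ with $X'(K_n) \cong X'/\nu_n X'$ (the natural surjection being an isomorphism) gives $\omega_n X' = \nu_n X'$, i.e.\ $\nu_n X' = T(\nu_n X')$. Since $\nu_n X'$ is a compact $\Lambda$-module, the topological Nakayama lemma gives $\nu_n X' = 0$, so $X'(K_\infty) \cong X'(K_n)$.

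The main obstacle is the claim identifying $i'_n$ with multiplication by $\nu_n$ on $X'/Y'$. We must check that the transfer from $\Gal(L'(K)/K)$ to $\Gal(L'(K_n)/K_n)$ is induced by capitulation of ideal classes modulo the subgroups $D$. We must also check that $D(K)$ maps into $D(K_n)$, which holds since $\mathfrak{p}'O_{K_n} = (\mathfrak{p}'_n)^{p^n}$, so that $i'_n$ is well defined. We will verify these compatibilities via class field theory, using Lemma \ref{D_n_generator}.
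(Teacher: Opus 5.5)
Your proposal is correct and follows the paper's approach: the paper proves this theorem only by saying that the argument of Theorem \ref{finiteness_criterion1} carries over. You carry that argument over verbatim, with $X'$, $Y'$, $\mathfrak{X}'_\mathfrak{p}$ and parts (2) of Theorems \ref{quotient_restricted} and \ref{quotient_unramified} in place of their unprimed versions, and the identification of $i'_n$ with $x \mapsto \nu_n x$ that you defer (and that the paper leaves implicit) does follow from the usual transfer computation in $\Gal(L'(K_\infty)/K)$.
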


\begin{proof}
We can also show the assertion by using the argument given in
the proof of Theorem \ref{finiteness_criterion1}.
\end{proof}

\begin{lem}\label{split_finiteness_implies}
Assume that $|\mathfrak{X}_\mathfrak{p} (K)|$ is finite.
If $|X' (K_\infty)|$ is finite, then $|X (K_\infty)|$ is also finite.
\end{lem}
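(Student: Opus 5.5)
The plan is to bound $|A(K_n)|$ uniformly in $n$. By the equivalence recalled in the introduction (boundedness of $|A(K_n)|$ is equivalent to finiteness of $|X(K_\infty)|$), this suffices. By Definition \ref{definition_D_n} we have, for every $n$,
\[ |A(K_n)| = |D(K_n)| \cdot |A'(K_n)|, \]
so I will bound the two factors separately.

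\emph{The factor $|A'(K_n)|$.} By class field theory, $L'(K_n)$ is the subfield of $L(K_n)$ fixed by the image of the subgroup of $A(K_n)$ generated by the $p$-parts of the classes of $\mathfrak{p}_n$ and $\mathfrak{p}'_n$. This subgroup is exactly $D(K_n)$, so $X'(K_n) \cong A(K_n)/D(K_n) = A'(K_n)$. Theorem \ref{quotient_unramified}(2) then shows that $X'(K_n)$ is a quotient of $X'(K_\infty)$. Hence $|A'(K_n)| \le |X'(K_\infty)|$, which is finite by hypothesis.

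\emph{The factor $|D(K_n)|$.} By Lemma \ref{D_n_generator}, $D(K_n) = A(K_n) \cap \langle c(\mathfrak{p}'_n) \rangle$. Since $\mathfrak{p}'_n$ is the unique prime of $K_n$ above $\mathfrak{p}'$, the ideal $\mathfrak{p}'_n$ is fixed by $G_n = \Gal(K_n/K)$. Hence its class, and every power of it, is $G_n$-invariant, and so $D(K_n) \subseteq A(K_n)^{G_n}$. Because $|\mathfrak{X}_\mathfrak{p}(K)|$ is finite, Lemma \ref{finiteness_B_n} gives $|A(K_n)^{G_n}| = |\mathfrak{X}_\mathfrak{p}(K)|$ for all sufficiently large $n$. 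Therefore $|D(K_n)| \le |\mathfrak{X}_\mathfrak{p}(K)|$ for large $n$.

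Combining the two bounds gives $|A(K_n)| \le |\mathfrak{X}_\mathfrak{p}(K)| \cdot |X'(K_\infty)|$ for all large $n$. So $|A(K_n)|$ is bounded, and $|X(K_\infty)|$ is finite. The only step needing care is the class field theoretic identification $X'(K_n) \cong A'(K_n)$, which requires that imposing complete splitting of both $\mathfrak{p}_n$ and $\mathfrak{p}'_n$ corresponds to quotienting by exactly $D(K_n)$. This follows from the description of $D(K_n)$ as $A(K_n) \cap \langle c(\mathfrak{p}_n), c(\mathfrak{p}'_n)\rangle$, and it is the same fact used to show $L'(K_n) = L^\dagger(K_n)$. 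The rest is a direct assembly of the earlier results.
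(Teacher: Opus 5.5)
Your proposal is correct and follows the paper's proof: you bound $|D(K_n)|$ via $D(K_n)\subseteq A(K_n)^{G_n}$ together with Lemma~\ref{finiteness_B_n}, and you bound $|A'(K_n)|$ by finiteness of $X'(K_\infty)$. The only difference is that you spell out the identification $A'(K_n)\cong X'(K_n)$ and the fact that $X'(K_n)$ is a quotient of $X'(K_\infty)$, both of which the paper leaves implicit.
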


\begin{proof}
By Lemma \ref{finiteness_B_n}, we see that $| A (K_n)^{\Gal (K_n /k)}|$
is bounded with respect to $n$.
Since, $D (K_n) \subset A (K_n)^{\Gal (K_n /k)}$, we also see that
$| D(K_n)|$ is bounded.
Hence, if $|A' (K_n)|$ is bounded, then $|A (K_n)|$ is also bounded.
The assertion follows from this.
\end{proof}

\begin{rem}\label{Fukuda_analog}
Since $\mathfrak{X}_\mathfrak{p} (K_n) \cong
\mathfrak{X}_\mathfrak{p} (K_\infty) / \omega_n \mathfrak{X}_\mathfrak{p} (K_\infty)$,
we can obtain an analog of Fukuda's theorem \cite[Theorem 1(1)]{Fuku94}.
That is, if $|\mathfrak{X}_\mathfrak{p} (K_n)| =
|\mathfrak{X}_\mathfrak{p} (K_{n+1})|$, then
$|\mathfrak{X}_\mathfrak{p} (K_\infty)| = |\mathfrak{X}_\mathfrak{p} (K_n)|$
(hence $|A (K_m)| = |\mathfrak{X}_\mathfrak{p} (K_n)|$ for all
sufficiently large $m \geq n$).
We can show this by imitating the original proof.
(Of course, a similar result for $\mathfrak{X}'_\mathfrak{p} (K_\infty)$
is also can be obtained.)
We note that the case that
\[ |A (K_n)| < |A (K_{n+1})| < |\mathfrak{X}_\mathfrak{p} (K_n)| =
|\mathfrak{X}_\mathfrak{p} (K_{n+1})| \]
may occur, hence this criterion seems more useful depending on the situation.
However, there are examples such that both 
$|\mathfrak{X}_\mathfrak{p} (K_n)| < |\mathfrak{X}_\mathfrak{p} (K_{n+1})|$
and
$|\mathfrak{X}'_\mathfrak{p} (K_n)| < |\mathfrak{X}'_\mathfrak{p} (K_{n+1})|$
is satisfied but Theorem \ref{finiteness_criterion1} is applicable for $K_{n+1}$ 
(see Example \ref{example_rank_7}).
Moreover, we later give a result on the relationship between the 
stabilization of $|\mathfrak{X}_\mathfrak{p} (K_n)|$ and 
that of $|X' (K_n)|$ under a certain situation 
(Corollary \ref{Consequence_Fukuda_analog}).
\end{rem}

We shall show that $|Z_\mathfrak{p} (K_n)|$ does not increase 
(if it is finite).

\begin{prop}\label{inertia_subgroup_order}
Let $n$ be a non-negative integer.
Assume that $|Z_\mathfrak{p} (K_n)|$ is finite.
Then $|Z_\mathfrak{p} (K_{n+1})|$ is also finite,
and
\[ |Z_\mathfrak{p} (K_{n+1})| \leq
|Z_\mathfrak{p} (K_n)|. \]
In particular, if $|Z_\mathfrak{p} (K_n)|$ is trivial,
then $|Z_\mathfrak{p} (K_{n+1})|$ is trivial.
\end{prop}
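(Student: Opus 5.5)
The plan is to realize $Z_\mathfrak{p}(K_n)$ and $Z_\mathfrak{p}(K_{n+1})$ as quotients of the procyclic groups from Lemma \ref{lem_cyclic_F_n}(1), and to compare them through the local norm map. Put $\Gamma_n = \Gal(F_\infty/F_n) \cong \mathbb{Z}_p$. By Lemma \ref{lem_cyclic_F_n}(1), the local reciprocity map of $(F_n)_{\mathfrak{p}^0_n}$ identifies $\mathcal{U}_n/\mathcal{E}(F_n)$ with $\Gamma_n$. Let $B_n \subset \Gamma_n$ be the image of $\mathcal{E}(K_n)$. Then
\[ Z_\mathfrak{p}(K_n) \cong \mathcal{U}_n/\mathcal{E}(K_n) \cong \Gamma_n/B_n , \]
and $Z_\mathfrak{p}(K_n)$ is finite exactly when $B_n \neq 0$, that is, when $B_n$ is an open subgroup $p^{b}\mathbb{Z}_p$ of $\Gamma_n$.

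Next I relate level $n+1$ to level $n$. By functoriality of local class field theory, the norm $N\colon \mathcal{U}_{n+1} \to \mathcal{U}_n$ of the totally ramified degree-$p$ extension $(K_{n+1})_{\mathfrak{p}_{n+1}}/(K_n)_{\mathfrak{p}_n}$ corresponds, under the identifications above, to the inclusion $\Gamma_{n+1} \hookrightarrow \Gamma_n$. This inclusion is injective with image of index $p$. I will therefore view $\Gamma_{n+1}$ and $B_{n+1}$ inside $\Gamma_n$, where
\[ |Z_\mathfrak{p}(K_{n+1})| = [\Gamma_{n+1} : B_{n+1}]. \]

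The key step is a lower bound for $B_{n+1}$. Take $u \in E^1(K_n)$. Then $u \in E^1(K_{n+1})$, and since $u$ is fixed by $\Gal(K_{n+1}/K_n)$, its local norm is $N(u) = u^p$. Passing to closures, the image of $\mathcal{E}(K_{n+1})$ in $\Gamma_n$ contains the image of $\mathcal{E}(K_n)^p$. Written additively inside $\Gamma_n \cong \mathbb{Z}_p$, this says $B_{n+1} \supseteq pB_n$.

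Assume now that $Z_\mathfrak{p}(K_n)$ is finite, so $B_n \neq 0$. Then $pB_n \neq 0$, hence $B_{n+1} \neq 0$ and $Z_\mathfrak{p}(K_{n+1})$ is finite. For the inequality, all groups involved are subgroups of $\mathbb{Z}_p$ and the indices multiply:
\[ [\Gamma_{n+1} : B_{n+1}] \le [\Gamma_{n+1} : pB_n] = \frac{[\Gamma_n : B_n]\,[B_n : pB_n]}{[\Gamma_n : \Gamma_{n+1}]} = \frac{p\,[\Gamma_n:B_n]}{p} = |Z_\mathfrak{p}(K_n)|. \]
The statement for trivial $Z_\mathfrak{p}(K_n)$ is the special case where both sides equal $1$.

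The step I expect to need the most care is the compatibility claim: that the norm on $\mathcal{U}_{n+1}/\mathcal{E}(F_{n+1})$ really becomes the inclusion $\Gamma_{n+1}\subset\Gamma_n$. This uses (K3) to identify $(K_n)_{\mathfrak{p}_n}$ with $(F_n)_{\mathfrak{p}^0_n}$ at every level, together with the norm-compatibility of the Artin maps for $F_{n+1}/F_n$. A related point is to check that $N(\mathcal{E}(K_{n+1})) \subseteq \mathcal{E}(K_n)$, so that the norm induces a map $Z_\mathfrak{p}(K_{n+1}) \to Z_\mathfrak{p}(K_n)$. This holds because the local norm at $\mathfrak{p}_{n+1}$ of a global unit congruent to $1$ agrees with its global norm, which is again such a unit: $\mathfrak{p}_{n+1}$ is the only prime above $\mathfrak{p}_n$ and the extension $K_{n+1}/K_n$ is Galois.
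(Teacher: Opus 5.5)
Your proof is correct and follows essentially the paper's argument. It uses the same two inputs. First, the norm on $\mathcal{U}_{n+1}/\mathcal{E}(F_{n+1})$ becomes the index-$p$ inclusion $\Gal(F_\infty/F_{n+1})\subset\Gal(F_\infty/F_n)$. Second, $N\mathcal{E}(K_{n+1})\supseteq\mathcal{E}(K_n)^p$, which is the paper's ``exponent of the cokernel is at most $p$'' combined with cyclicity.

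The difference is only bookkeeping. You count indices of subgroups of $\mathbb{Z}_p$ directly from $pB_n\subseteq B_{n+1}\subseteq\Gamma_{n+1}$, whereas the paper applies the snake lemma to two diagrams and reads off a table of cases. As a result, your count does not need the induced map $Z_\mathfrak{p}(K_{n+1})\to Z_\mathfrak{p}(K_n)$ that you verify at the end.
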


\begin{proof}
Recall that $|Z_\mathfrak{p} (K_{n})| = |\mathcal{U}_n / \mathcal{E} (K_n)|$
and $|Z_\mathfrak{p} (K_{n+1})| = |\mathcal{U}_{n+1} / \mathcal{E} (K_{n+1})|$.
To prove the assertion, we shall compare
$|\mathcal{U}_n / \mathcal{E} (K_n)|$ and $|\mathcal{U}_{n+1} / \mathcal{E} (K_{n+1})|$.

Consider the following commutative diagram:
\[ \begin{CD}
\mathcal{U}_{n+1} / \mathcal{E} (F_{n+1}) @>{\sim}>> \Gal (F_\infty / F_{n+1}) \\
@VV{\text{norm}}V @VV{\text{restriction}}V \\
\mathcal{U}_{n} / \mathcal{E} (F_{n}) @>{\sim}>> \Gal (F_\infty / F_{n}),
\end{CD} \]
We note that the right vertical map is actually the natural injection,
and the order of its cokernel is exactly $p$.
Then left vertical map is also injective, and
its cokernel also has order exactly $p$.

Next, we consider the following commutative diagram with exact rows:
\[ \begin{CD}
\mathcal{E} (K_{n+1}) @>>> \mathcal{E} (K_{n+1})/ \mathcal{E} (F_{n+1}) @>>> 0 \\
@VV{\text{norm}}V @VVV \\
\mathcal{E} (K_{n}) @>>> \mathcal{E} (K_{n})/ \mathcal{E} (F_{n}) @>>> 0
\end{CD}. \]
We note the following two facts:
the exponent of the cokernel of the left vertical map is at most $p$
(since $K_{n+1} /K_n$ is a cyclic extension of degree $p$),
and $\mathcal{E} (K_{n})/ \mathcal{E} (F_{n})$ is a
cyclic $\mathbb{Z}_p$-module (since $\mathcal{U}_n / \mathcal{E} (F_{n})$
is cyclic).
From these facts, we see that the cokernel of the right vertical
map has order $1$ or $p$.

Finally, we consider the following commutative diagram with exact rows
and columns:
\[ \begin{CD}
@. @. 0 @>>> \mathscr{A} @. \\
@. @. @VVV @VVV @. \\
0 @>>>
\mathcal{E} (K_{n+1})/ \mathcal{E} (F_{n+1}) @>>>
\mathcal{U}_{n+1} / \mathcal{E} (F_{n+1}) @>>>
\mathcal{U}_{n+1} / \mathcal{E} (K_{n+1}) @>>> 0  \\
@. @VVV @VVV @VVV @. \\
0 @>>> \mathcal{E} (K_{n})/ \mathcal{E} (F_{n}) @>>>
\mathcal{U}_{n} / \mathcal{E} (F_{n}) @>>>
\mathcal{U}_{n} / \mathcal{E} (K_{n}) @>>> 0 \\
@. @VVV @VVV @VVV @. \\
@. \mathscr{B} @>>> \mathscr{C} @>>> \mathscr{D} @>>>0
\end{CD}, \]
where $\mathscr{A}$, $\mathscr{B}$, $\mathscr{C}$, $\mathscr{D}$ 
are the kernel or the cokernel of the vertical maps.
By the facts stated in the second preceding paragraph,
the central vertical map is injective and $|\mathscr{C}| = p$.
We also note that $|\mathscr{B}|$ is $1$ or $p$ by the fact 
stated in the preceding paragraph.
By applying the snake lemma, we obtain the exact sequence
\[ 0 \to \mathscr{A} \to \mathscr{B} \to \mathscr{C} \to \mathscr{D} \to 0. \]
Since $|\mathscr{C}| =p$ and $|\mathscr{B}| \leq p$, the possibility of the
orders of $\mathscr{A}$, $\mathscr{B}$, $\mathscr{C}$, $\mathscr{D}$ are the following.

\[ \begin{array}{cccc}
|\mathscr{A}| & |\mathscr{B}| & |\mathscr{C}| & |\mathscr{D}| \\
\hline
p & p & p & p \\
1 & p & p & 1 \\
1 & 1 & p & p
\end{array} \]

In any case, we can see that $|\mathcal{U}_{n+1} / \mathcal{E} (K_{n+1})|$
must be equal to $|\mathcal{U}_{n} / \mathcal{E} (K_{n})|$
or $|\mathcal{U}_{n} / \mathcal{E} (K_{n})| / p$.
We then obtain the assertion.
\end{proof}

\begin{rem}
Combining Theorem \ref{finiteness_criterion1} and the above
Proposition \ref{inertia_subgroup_order}, we see that
if both of the following conditions
\begin{itemize}
\item $i_n (A (K))$ is trivial for some $n$, and 
\item $M_{\mathfrak{p}} (K_m) = L (K_m)$ for some $m$
\end{itemize}
are satisfied, then $|X (K_\infty)|$ is finite.
\end{rem}

At the end of this subsection, we shall state another criterion 
for the finiteness of $|X (K_\infty)|$.
The following seems useful 
if we know that $\rankp X (K_\infty) = 1$ in advance.
(We will give cyclicity criteria in Section \ref{subsection_rank}.)
Note that one can find the essence of this result in the proof of  
\cite[Theorem 1]{F-T} (see also Section \ref{subsection_FT}).
However, we give a simple proof by using $M_\mathfrak{p} (K_n) /K_n$, 
and we also mention the stabilization of $|\mathfrak{X}'_\mathfrak{p} (K_n)|$.

\begin{prop}\label{cyclic_finiteness_criterion}
Assume that $|\mathfrak{X}_\mathfrak{p} (K)|$ is finite, and
$X (K_\infty)$ is cyclic as a $\mathbb{Z}_p$-module.
If $M_\mathfrak{p} (K_n) \neq M'_\mathfrak{p} (K_n)$ for some
$n \geq 0$, then $|\mathfrak{X}'_\mathfrak{p} (K_m)|
= |\mathfrak{X}'_\mathfrak{p} (K_n)|$ holds for all $m > n$.
Moreover, $X (K_\infty)$ is finite.
\end{prop}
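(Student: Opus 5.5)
The plan is to work entirely inside $X = \mathfrak{X}_\mathfrak{p}(K_\infty) = X(K_\infty)$ (Proposition \ref{Iwasawa_modules_coincidence}) and to use that, since $X$ is cyclic over $\mathbb{Z}_p$, its closed $\mathbb{Z}_p$-submodules are exactly the $p^kX$ (together with $0$ when $X \cong \mathbb{Z}_p$), so they are totally ordered by inclusion.

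\emph{Step 1: describe $\mathfrak{X}'_\mathfrak{p}(K_\infty)$ as a quotient of $X$.} Since $M'_\mathfrak{p}(K_\infty) = L^\dagger(K_\infty) = L'(K_\infty)$ is the fixed field in $M_\mathfrak{p}(K_\infty)$ of the decomposition groups of the primes above $\mathfrak{p}'$, we have
\[ \mathfrak{X}'_\mathfrak{p}(K_\infty) = X/\mathcal{D}, \]
where $\mathcal{D}$ is the closed $\Lambda$-submodule generated by these decomposition groups. Since $\mathfrak{p}'$ is totally ramified in $K_\infty$, there is only one prime above $\mathfrak{p}'$, so $\mathcal{D}$ is just its decomposition group (a $\Lambda$-submodule). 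Theorem \ref{quotient_restricted} then gives
\[ \mathfrak{X}_\mathfrak{p}(K_n) \cong X/\omega_n X, \qquad \mathfrak{X}'_\mathfrak{p}(K_n) \cong X/(\mathcal{D} + \omega_n X). \]
Moreover, the natural surjection $\mathfrak{X}_\mathfrak{p}(K_n) \to \mathfrak{X}'_\mathfrak{p}(K_n)$ is compatible with these isomorphisms. This compatibility is the point I expect to need the most care: checking that the isomorphisms of Theorem \ref{quotient_restricted}(1) and (2) commute with restriction.

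\emph{Step 2: use the hypothesis.} The hypothesis $M_\mathfrak{p}(K_n) \neq M'_\mathfrak{p}(K_n)$ says that the surjection above is not injective, that is, $\mathcal{D} \not\subset \omega_n X$. By total ordering of submodules of the cyclic $\mathbb{Z}_p$-module $X$, this forces
\[ \omega_n X \subsetneq \mathcal{D}. \]
For $m > n$, $\omega_n$ divides $\omega_m$, so $\omega_m X \subset \omega_n X \subset \mathcal{D}$. Hence
\[ \mathfrak{X}'_\mathfrak{p}(K_m) \cong X/\mathcal{D} \cong \mathfrak{X}'_\mathfrak{p}(K_n) \quad \text{for all } m \geq n, \]
which proves the first assertion.

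\emph{Step 3: finiteness.} From Step 2, $\mathcal{D}$ is a nonzero submodule of the cyclic $\mathbb{Z}_p$-module $X$, since it strictly contains $\omega_n X \supseteq 0$. A nonzero closed submodule of $\mathbb{Z}_p$ or of $\mathbb{Z}/p^a$ has finite index, so $X' = \mathfrak{X}'_\mathfrak{p}(K_\infty) = X/\mathcal{D}$ is finite. Since $L'(K_\infty) = M'_\mathfrak{p}(K_\infty)$, this means $|X'(K_\infty)|$ is finite. Because $|\mathfrak{X}_\mathfrak{p}(K)|$ is assumed finite, Lemma \ref{split_finiteness_implies} then gives that $|X(K_\infty)|$ is finite.

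This route deliberately avoids any analysis of how $T$ acts on $X$. That matters because, for $p=2$, $\gamma$ can act on $X \cong \mathbb{Z}_2$ by $-1$, so arguments that look directly at $\omega_n X$ could fail.
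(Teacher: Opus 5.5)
Your proof is correct, and its core idea matches the paper's. Subgroups of a cyclic group are totally ordered, so once the decomposition group of $\mathfrak{p}'$ is not contained in the kernel at level $n$, it controls every higher level; Lemma \ref{split_finiteness_implies} then finishes in both versions.

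The difference is where you run the argument. The paper stays at finite layers. For $m>n$ it uses that $\mathfrak{X}_\mathfrak{p}(K_m)$ is finite cyclic, and that the decomposition field of $\mathfrak{p}'_m$ in $M_\mathfrak{p}(K_n)K_m/K_m$ is $M'_\mathfrak{p}(K_n)K_m$, a proper subfield by hypothesis. Finite Galois theory then gives the explicit identity $M'_\mathfrak{p}(K_m)=M'_\mathfrak{p}(K_n)K_m$. You instead compare the decomposition group $\mathcal{D}$ with $\omega_nX$ once, inside $X$, and descend with Theorem \ref{quotient_restricted}(2). This gives the cleaner statement $\mathfrak{X}'_\mathfrak{p}(K_m)\cong X/\mathcal{D}\cong X'(K_\infty)$ for all $m\ge n$, with $\mathcal{D}$ of finite index. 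The cost is that you rely on Hachimori's descent for $\mathfrak{X}'_\mathfrak{p}$ and on the identification $\mathfrak{X}'_\mathfrak{p}(K_\infty)=X/\mathcal{D}$.

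The compatibility you flagged as the delicate point is not actually needed. Since $\mathfrak{X}_\mathfrak{p}(K_n)$ is finite (Proposition \ref{inertia_subgroup_order}), $M_\mathfrak{p}(K_n)\neq M'_\mathfrak{p}(K_n)$ is equivalent to $|X/\omega_nX|>|X/(\mathcal{D}+\omega_nX)|$. That in turn is equivalent to $\mathcal{D}\not\subset\omega_nX$, and this uses only the abstract isomorphisms.

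Your closing worry about $\gamma$ acting by $-1$ is also moot under these hypotheses. In that case $\omega_1X=0$, which would make $\mathfrak{X}_\mathfrak{p}(K_1)\cong X/\omega_1X$ infinite.
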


\begin{proof}
By the assumption, we see that
$\mathfrak{X}_\mathfrak{p} (K_m)$ is a finite cyclic group.
The decomposition subgroup of $\Gal (M_\mathfrak{p} (K_n) K_m / K_m)$ for 
$\mathfrak{p}'_m$ is $M'_\mathfrak{p} (K_n) K_m$.
Hence, if $M_\mathfrak{p} (K_n) / M'_\mathfrak{p} (K_n)$ is a
non-trivial extension, then the decomposition subgroup of
$\Gal (M_\mathfrak{p} (K_m) / K_m)$ for 
$\mathfrak{p}'_m$ is $M'_\mathfrak{p} (K_n) K_m$.
This implies that
$M'_\mathfrak{p} (K_m) =M'_\mathfrak{p} (K_n) K_m$, and
hence $|\mathfrak{X}'_\mathfrak{p} (K_m)|
= |\mathfrak{X}'_\mathfrak{p} (K_n)|$.

By the above result, we see that $|X' (K_\infty)|$ is finite.
(In fact, $|X' (K_\infty)| = |\mathfrak{X}'_\mathfrak{p} (K_n)|$.)
Then the finiteness of $X (K_\infty)$ also follows from
Lemma \ref{split_finiteness_implies}.
\end{proof}

\subsection{Relationship with $D (K_n)$}\label{subsection_Dn}

In the study of Greenberg's conjecture, the object like $D (K_n)$ 
often plays an important role (e.g., \cite[Theorem 2]{Gre76}).
We shall give a formula which describes a relationship between 
$|\mathcal{U}_n / \mathcal{E} (K_n)| (=|Z_\mathfrak{p} (K_n)|)$ 
and $|D (K_n)|$.
This is obtained inspired by the study of Fukuda-Taya \cite{F-T}.
We denote by $\rankZ E (F)$, $\rankZ E (K)$ the free rank of $E (F)$, $E (K)$,
respectively.

\begin{prop}\label{Bn_Dn}
Let $n$ be a non-negative integer.
Assume that all of the following conditions are satisfied:
\begin{itemize}
\item $| \mathfrak{X}_\mathfrak{p} (K) |$ is finite,
\item $i_n (A(K)) \subset D (K_n)$, and
\item $\rankZ E(K) = \rankZ E (F) + 1$.
\end{itemize}
Then, the following equality holds.
\begin{equation}\label{eq_Bn_Dn}
|\mathcal{U}_n / \mathcal{E} (K_n)| \cdot | D(K_n)| =
| \mathfrak{X}_\mathfrak{p} (K) |.
\end{equation}
Moreover, $| \mathfrak{X}_\mathfrak{p} (K_m) | = 
| \mathfrak{X}_\mathfrak{p} (K) | \cdot | X' (K_m) |$ holds for all $m \geq n$.
\end{prop}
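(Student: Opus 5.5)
The plan is to first turn \eqref{eq_Bn_Dn} into a statement about orders of Galois groups, and then compute those orders with Chevalley's ambiguous class number formula. By class field theory $|\mathfrak{X}_\mathfrak{p}(K_m)| = |A(K_m)|\cdot|\mathcal{U}_m/\mathcal{E}(K_m)|$. By Lemma \ref{D_n_generator}, $L'(K_m)$ is the fixed field of $D(K_m)$ inside $L(K_m)$, so $|A(K_m)| = |X'(K_m)|\cdot|D(K_m)|$. Hence, for every $m$,
\[ |\mathfrak{X}_\mathfrak{p}(K_m)| = |X'(K_m)|\cdot|D(K_m)|\cdot|\mathcal{U}_m/\mathcal{E}(K_m)|, \]
and \eqref{eq_Bn_Dn} at level $m$ is equivalent to $|\mathfrak{X}_\mathfrak{p}(K_m)| = |\mathfrak{X}_\mathfrak{p}(K)|\cdot|X'(K_m)|$.

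The hypotheses are stable under going up. The first and third do not depend on $n$. For the second, $i_m = i_{m,n}\circ i_n$, and extension of ideals sends $\mathfrak{p}_n$ to a power of $\mathfrak{p}_m$ (total ramification), so it maps $D(K_n)$ into $D(K_m)$. Therefore $i_m(A(K))\subset D(K_m)$ for all $m\ge n$, and the ``moreover'' part follows once \eqref{eq_Bn_Dn} is proved for an arbitrary $n$ satisfying the hypotheses. It remains to prove \eqref{eq_Bn_Dn} for a fixed $n$.

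Step 1 is to describe $D(K_n)$ via ambiguous classes. Put $G = G_n$. Both $\mathfrak{p}$ and $\mathfrak{p}'$ are totally ramified in $K_n/K$, and $A(F_n)$ is trivial. I would show that the strongly ambiguous classes are exactly $\overline{A}(K_n)^G = i_n(A(K))\cdot D(K_n)$: the $G$-invariant ideals are generated by ideals from $K$ together with $\mathfrak{p}_n$ and $\mathfrak{p}'_n$. By the second hypothesis this equals $D(K_n)$. Chevalley's formula (\cite[Lemma 4.1]{Lang}) with two ramified primes then gives
\[ |D(K_n)| = |\overline{A}(K_n)^G| = \frac{|A(K)|\cdot p^{2n}}{p^n\,[E(K):E(K)\cap \Gnorm K_n^\times]}, \]
up to correcting by the index of norms of units versus units that are norms of elements. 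I will handle that correction in the same bookkeeping, since the strongly ambiguous version of the formula uses $[E(K):\Gnorm E(K_n)]$ in place of the element-norm index.

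Step 2, the unit index, is where the rank hypothesis enters. Units of $F$ are norms from $F_n$: one ramified prime, trivial class groups and the cyclic-norm argument give this. Hence they are norms from $K_n = KF_n$, and $E(K)/E(F)E(K)^{\mathrm{tors}}$ is generated by a single unit $\varepsilon$. The index is then the order of the local norm residue symbol of $\varepsilon$ at $\mathfrak{p}$ in $G$. By the product formula, the symbol at $\mathfrak{p}'$ is its inverse. Using $K_\mathfrak{p} = F_{\mathfrak{p}^0}$ and Lemma \ref{lem_cyclic_F_n}(1), this symbol is the image of $\varepsilon$ in $\mathcal{U}_0/\mathcal{E}(F)\cong \Gal(F_\infty/F)$, reduced to $G$.

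Step 3 compares with $\mathcal{U}_n/\mathcal{E}(K_n)$ using the norm diagrams from the proof of Proposition \ref{inertia_subgroup_order}. I would identify $\mathcal{U}_n/\mathcal{E}(K_n)$ with $\Gal(F_\infty/F_n)$ modulo the closed subgroup generated by the image of $\varepsilon$ (and of norms of units of $K_n$). Likewise, $|\mathfrak{X}_\mathfrak{p}(K)| = |A(K)|\cdot|\mathcal{U}_0/\mathcal{E}(K)|$, where $\mathcal{U}_0/\mathcal{E}(K)$ is $\Gal(F_\infty/F)$ modulo the image of $\varepsilon$, a finite cyclic group of order $p^a$. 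Multiplying $|D(K_n)|$ from Step 1 by $|\mathcal{U}_n/\mathcal{E}(K_n)|$, the powers of $p$ coming from $\varepsilon$ should combine to $p^a$, giving $|A(K)|p^a = |\mathfrak{X}_\mathfrak{p}(K)|$. Equivalently, in the Galois picture, I would show that $\Gal(M_\mathfrak{p}(K_n)/L'(K_n))$ has the same order as its image $\Gal(M_\mathfrak{p}(K)/L'(K))$-type quotient. That holds because $\Gal(M_\mathfrak{p}(K_n)/L'(K_n))$ is generated by the inertia group at $\mathfrak{p}_n$ and the decomposition group at $\mathfrak{p}'_n$.

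The main obstacle is Step 3: matching the unit contributions exactly. The difficulty is that $\mathcal{E}(K_n)$ may contain units of $K_n$ not coming from $K$. I would try to control this with the finiteness of $\mathfrak{X}_\mathfrak{p}(K)$ and the coinvariant isomorphism $\mathfrak{X}_\mathfrak{p}(K_n)_{G}\cong\mathfrak{X}_\mathfrak{p}(K)$ from the proof of Lemma \ref{finiteness_B_n}. That isomorphism should let me bypass explicit units: show directly that $G$ acts trivially on $\mathfrak{X}_\mathfrak{p}(K_n)/X'(K_n)$-part, i.e. that the subgroup generated by the inertia at $\mathfrak{p}_n$ and the Frobenius of $\mathfrak{p}'_n$ maps isomorphically onto the coinvariants. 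This uses $i_n(A(K))\subset D(K_n)$ to guarantee $(\gamma-1)\mathfrak{X}_\mathfrak{p}(K_n)$ lies inside $\Gal(M_\mathfrak{p}(K_n)/L'(K_n))$ only trivially.
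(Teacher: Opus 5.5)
The reduction to $|\mathfrak{X}_\mathfrak{p}(K_m)| = |X'(K_m)|\,|D(K_m)|\,|\mathcal{U}_m/\mathcal{E}(K_m)|$ agrees with the paper. So do the stability of the hypotheses for $m\ge n$ and the formula $|D(K_n)| = |\overline{A}(K_n)^{G_n}| = |A(K)|\,p^n/(E(K):\Gnorm_{n,0}E(K_n))$. The gap is the unit index.

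In Step 2 you compute, via norm residue symbols at $\mathfrak{p},\mathfrak{p}'$ and Hasse's theorem, the index $(E(K):E(K)\cap\Gnorm_{n,0}K_n^\times)$. That is the index in Chevalley's formula for $A(K_n)^{G_n}$; it equals $p^{\max(0,n-a)}$ with $p^a=|\mathcal{U}_0/\mathcal{E}(K)|$ (cf.\ the proof of Lemma \ref{unramified_subextension}). Inserting it therefore yields $|A(K_n)^{G_n}|$, not $|D(K_n)|$. Local symbols cannot detect whether $\varepsilon^{p^j}$ is the norm of a \emph{unit} of $K_n$, so the ``correction'' you postpone is the whole content of the proposition. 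For instance, for $k=\mathbb{Q}(\sqrt q)$ under (A1), (A2) and $n=1$, the element-norm index is $1$, while $D(k_1)=1$ forces $(E(k):\Gnorm_{1,0}E(k_1))=2$.

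The missing idea is the paper's Claim 2: $(E(K):\Gnorm_{n,0}E(K_n)) = (\mathcal{E}(K):\Lnorm_{n,0}\mathcal{E}(K_n))$. The maps $j_n\colon E^1(K_n)\otimes\mathbb{Z}_p\to\mathcal{E}(K_n)$ are surjective and compatible with the norms. Moreover $j_0$ is an isomorphism because $\rankZ E(K)=\rankZ E(F)+1=\rankZp\mathcal{U}_0=\rankZp\mathcal{E}(K)$, using Leopoldt for $F$ and finiteness of $\mathfrak{X}_\mathfrak{p}(K)$. This, not the existence of a single generator $\varepsilon$, is where the rank hypothesis enters. The local index then comes from a snake lemma, using $\Lnorm_{n,0}\mathcal{E}(F_n)=\mathcal{E}(F)$ and the injectivity of $\mathcal{U}_n/\mathcal{E}(F_n)\to\mathcal{U}_0/\mathcal{E}(F)$ with cokernel of order $p^n$. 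The result is $(\mathcal{E}(K):\Lnorm_{n,0}\mathcal{E}(K_n)) = p^n|\mathcal{U}_n/\mathcal{E}(K_n)|/|\mathcal{U}_0/\mathcal{E}(K)|$. Your description of $\mathcal{U}_n/\mathcal{E}(K_n)$ as $\Gal(F_\infty/F_n)$ modulo norms of units of $K_n$ amounts to this formula; what you lack is the bridge from global to local unit norms.

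Your fallback through $\mathfrak{X}_\mathfrak{p}(K_n)_{G_n}\cong\mathfrak{X}_\mathfrak{p}(K)$ is false, not just incomplete. Put $H=\Gal(M_\mathfrak{p}(K_n)/L'(K_n))$; the equality $|H|=|\mathfrak{X}_\mathfrak{p}(K)|$ is an equality of orders only. If $H$ surjected onto the coinvariants, then $X'(K_n)_{G_n}=\mathfrak{X}_\mathfrak{p}(K_n)/(H+(\gamma-1)\mathfrak{X}_\mathfrak{p}(K_n))$ would vanish. Since $G_n$ is a $p$-group, that would force $X'(K_n)=0$.

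In the example above, Lemma \ref{unramified_subextension} and $D(k_1)=1$ give $M_\mathfrak{p}(k)k_1=L(k_1)=L'(k_1)$. Hence $H$ is exactly the kernel of restriction $\mathfrak{X}_\mathfrak{p}(k_1)\to\mathfrak{X}_\mathfrak{p}(k)$, namely $(\gamma-1)\mathfrak{X}_\mathfrak{p}(k_1)$. So its image is trivial, even though $|H|=2=|\mathfrak{X}_\mathfrak{p}(k)|$. The trivial-intersection statement you hope to extract from $i_n(A(K))\subset D(K_n)$ therefore fails, and the proof has to go through the unit indices as described above.
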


\begin{proof}
If $A (K) = D (K)$, then 
\[ | \mathfrak{X}_\mathfrak{p} (K)| = 
|\mathcal{U}_0 / \mathcal{E} (K)| \cdot 
| A (K) | = 
|\mathcal{U}_0 / \mathcal{E} (K)| \cdot 
| D (K) |. \]
Hence, (\ref{eq_Bn_Dn}) holds for the case where $n = 0$.
In the remaining part of this proof, we assume that $n >0$.

First, we shall show the following claim.

\medskip

\noindent \underline{Claim 1.} The norm map $\mathcal{E} (F_n) \to \mathcal{E} (F)$
is surjective.

\medskip

Since the class number of $F_n$ is prime to $p$ and only one prime
ramifies in $F_n / F$, we can see that the norm map $E (F_n) \to E (F)$
is surjective (e.g., use a well known formula of 
$\overline{A} (F_n)^{\Gal (F_n/F)}$, which is also mentioned below).
Since $(E (F_n) : E^1 (F_n) )$ is prime to $p$,
we also see that the norm map $E^1 (F_n) \to E^1 (F)$
is surjective.
Hence the induced map $E^1 (F_n) \otimes_\mathbb{Z} \mathbb{Z}_p \to
E^1 (F) \otimes_\mathbb{Z} \mathbb{Z}_p$ is also surjective.
We note that the following diagram is commutative:
\[ \begin{CD}
E^1 (F_n) \otimes_\mathbb{Z} \mathbb{Z}_p @>{\sim}>> \mathcal{E} (F_n) \\
@VVV @VVV \\
E^1 (F) \otimes_\mathbb{Z} \mathbb{Z}_p @>{\sim}>> \mathcal{E} (F),
\end{CD} \]
here the vertical maps are the above stated ones, and 
the horizontal maps are induced from the embedding (cf. \cite[Chapter X]{NSW}).
We also note that the rows are surjective
(in fact, they are isomorphisms because Leopoldt's conjecture holds
for $F_n$).
Then the right vertical map is surjective, and the claim follows.

By the assumption and Proposition \ref{inertia_subgroup_order},
we see that
$|\mathfrak{X}_\mathfrak{p} (K_n)|$ is finite.
Hence $\mathcal{U}_n / \mathcal{E} (K_n)$ is finite, and
this implies that $\mathcal{E} (K_n) / \mathcal{E} (F_n)$ is
not trivial
(it is actually a free $\mathbb{Z}_p$-module of rank $1$).

We denote by $\Lnorm_{n,0} : \mathcal{E} (K_n) \to \mathcal{E} (K)$
the norm map.
We shall consider the following commutative diagram with exact rows:
\[ \begin{CD}
0 @>>> \mathcal{E} (F_n) @>>> \mathcal{E} (K_n) @>>>
\mathcal{E} (K_n)/ \mathcal{E} (F_n) @>>> 0 \\
@. @VVV @VV{\Lnorm_{n,0}}V @VV{\psi}V @. \\
0 @>>> \mathcal{E} (F) @>>> \mathcal{E} (K) @>>>
\mathcal{E} (K)/ \mathcal{E} (F) @>>> 0.
\end{CD} \]
We denote by $\mathscr{B}$ the cokernel of $\psi$. 
By Claim 1, $\mathscr{B}$ is isomorphic to
$\mathcal{E} (K) / \Lnorm_{n,0} \mathcal{E} (K_n)$.

Next, we consider the following commutative diagram with exact rows:
\[ \begin{CD}
0 @>>> \mathcal{E} (K_n) /\mathcal{E} (F_n) @>>> \mathcal{U}_n / \mathcal{E} (F_n) @>>>
\mathcal{U}_n / \mathcal{E} (K_n) @>>> 0 \\
@. @VV{\psi}V @VV{\varphi}V @VVV @. \\
0 @>>> \mathcal{E} (K) /\mathcal{E} (F) @>>> \mathcal{U}_0 / \mathcal{E} (F) @>>>
\mathcal{U}_0 / \mathcal{E} (K) @>>> 0,
\end{CD} \]
where the vertical maps are induced from the norm map.
The central vertical map $\varphi$ is injective, and
its cokernel (denote by it $\mathscr{C}$) 
is a cyclic group with order $p^n$
(these can be shown by using the same argument which is given in
the proof of Proposition \ref{inertia_subgroup_order}).
Let $\mathscr{A}$ (resp. $\mathscr{D}$) be the kernel (resp. cokernel) 
of the right vertical map. 
Then, by applying the snake lemma, we obtain the exact sequence:
\[ 0 \to \mathscr{A} \to \mathscr{B} \to \mathscr{C} \to \mathscr{D} 
\to 0. \]
Since $|\mathscr{C}| = p^n$, we obtain the equality
\[ |\mathscr{A}| \cdot p^n = |\mathscr{B}| \cdot |\mathscr{D}|. \]
Moreover, by using the facts that $|\mathscr{B}| =
|\mathcal{E} (K) / \Lnorm_{n,0} \mathcal{E} (K_n)|$ and
\[ |\mathscr{A}| \cdot |\mathcal{U}_0 / \mathcal{E} (K)| =
|\mathcal{U}_n / \mathcal{E} (K_n)| \cdot |\mathscr{D}| \]
(this follows from the right vertical map),
we obtain the equality
\begin{equation}\label{equation_local_units}
|\mathcal{E} (K) / \Lnorm_{n,0} \mathcal{E} (K_n)| =
\dfrac{p^n \cdot |\mathcal{U}_n / \mathcal{E} (K_n)|}{|\mathcal{U}_0 / \mathcal{E} (K_0)|}.
\end{equation}

The following argument is essentially same to that of given in
\cite{F-T} and \cite{Taya}.
Let $\Gnorm_{n,0} : E (K_n) \to E (K)$ be the norm map.
We put $G_n = \Gal (K_n / K)$.
Recall that $\overline{A} (K_n)^{G_n}$ is the subgroup of $A (K_n)$
consists on the ideal classes
which include an ideal invariant under the action of $G_n$.
By a well known formula of $|\overline{A} (K_n)^{G_n}|$, we see
\[ |\overline{A} (K_n)^{G_n}| = |A (K)| \cdot
\dfrac{p^n}{(E (K) : \Gnorm_{n,0} E (K_n))}. \]
Here, we note that
\[ \overline{A} (K_n)^{G_n} = i_n (A (K)) D (K_n). \]
Hence, from our assumption that $i_n (A (K)) \subset D (K_n)$,
we see that $\overline{A} (K_n)^{G_n} = D (K_n)$.
By combining these facts, we obtain the equality
\begin{equation}\label{equation_global_units}
|E (K) / \mskip -3mu \Gnorm_{n,0} E (K_n)| =
\dfrac{p^n \cdot |A (K)|}{|D (K_n)|}.
\end{equation}

We shall show the following claim.

\medskip

\noindent \underline{Claim 2.} Under the assumption that
$\rankZ E(K) = \rankZ E (F) + 1$, we see that \\
$|\mathcal{E} (K) / \Lnorm_{n,0} \mathcal{E} (K_n)|
= |E (K) / \Gnorm_{n,0} E (K_n)|$.

\medskip

Since $|E (K_n) / E^1 (K_n)|$ is prime to $p$ for every $n \geq 0$,
we see that $|E (K) / \Gnorm_{n,0} E (K_n)| = |E^1 (K) / \Gnorm_{n,0} E^1 (K_n)|$.
For $n \geq 0$, we denote by $j_n$ the map
$E^1 (K_n) \otimes_\mathbb{Z} \mathbb{Z}_p \to \mathcal{E} (K_n)$ 
induced from the embedding.

We remark that $j_0$ is not an isomorphism in general,
because $\mathcal{U}_0$ is not the group of semi-local units of $K$.
However, under our assumption that $\rankZ E(K) = \rankZ E (F) + 1$,
we can see that $j_0$ is an isomorphism.
Indeed, we see that
\[ \begin{array}{rcl}
\rankZp E^1 (K) \otimes_\mathbb{Z} \mathbb{Z}_p & = & \rankZ E(K) \\
 & = & \rankZ E (F) + 1 \\
 & = & \rankZp \mathcal{E} (F) + 1 \quad \text{(Leopoldt's conjecture holds for $F$)} \\
 & = & \rankZp \mathcal{U}_0 \\
 & = & \rankZp \mathcal{E} (K) \quad
\text{(it follows from the finiteness of $| \mathfrak{X}_\mathfrak{p} (K)|$),}
\end{array} \]
where $\rankZp$ denotes the $\mathbb{Z}_p$-rank  
(that is, the rank of the $\mathbb{Z}_p$-free part).
Let $\mu$ (resp. $\mu_*$) be the $\mathbb{Z}_p$-torsion submodule of 
$E^1 (K) \otimes_\mathbb{Z} \mathbb{Z}_p$ 
(resp. $\mathcal{E} (K)$).
We obtain the following commutative diagram with exact rows.
\[ \begin{CD}
0 @>>> \mu @>>> E^1 (K) \otimes_\mathbb{Z} \mathbb{Z}_p @>>>
(E^1 (K) \otimes_\mathbb{Z} \mathbb{Z}_p) /\mu @>>> 0 \\
@. @VV{j_0 |_\mu}V @VV{j_0}V @VVV @. \\
0 @>>> \mu_* @>>> \mathcal{E} (K) @>>>
\mathcal{E} (K) / \mu_* @>>> 0.
\end{CD} \]
We note that $j_0$ is surjective.
By the above result, we see that the right vertical map is 
isomorphism.
Moreover, the left vertical map is injective.
Hence we see that $j_0$ is an isomorphism.

Next, we shall consider the following diagram.
\[ \begin{CD}
E^1 (K_n) \otimes_\mathbb{Z} \mathbb{Z}_p @>{j_n}>> 
\mathcal{E} (K_n) \\
@VV{\text{norm}}V @VV{\text{norm}}V  \\
E^1 (K) \otimes_\mathbb{Z} \mathbb{Z}_p @>{j_0}>> 
\mathcal{E} (K).
\end{CD} \]
In our situation, this diagram is commutative.
(Note that the left vertical map is induced from the global norm, 
and the right vertical map is induced from the local norm.)
Since $j_n$ is a surjection and $j_0$ is an isomorphism, 
we see that both cokernels of the vertical maps are isomorphic.
From this, we can see that
\[ |E (K) / \Gnorm_{n,0} E (K_n)| = |E^1 (K) / \Gnorm_{n,0} E^1 (K_n)| =
| \mathcal{E} (K) / \Lnorm_{n,0} \mathcal{E} (K_n)|. \]
Then the claim follows.

By combining the equations \eqref{equation_local_units},
\eqref{equation_global_units} and Claim 2,  
we obtain the equation (\ref{eq_Bn_Dn}).

We shall show the second assertion. 
We first note that if $i_n (A (K)) \subset D (K_n)$ 
then $i_m (A (K)) \subset D (K_m)$ for all $m \geq n$.
Hence the equation (\ref{eq_Bn_Dn}) also holds for $m$ ($> n$).
Since 
\[ |\mathcal{U}_m / \mathcal{E} (K_m)| = |\Gal (M_\mathfrak{p} (K_m) /L(K_m))|, \quad
|D(K_m)| = |\Gal (L (K_m) /L' (K_m))|, \]
we see that 
\[ \begin{array}{rcl}
|\mathfrak{X}_\mathfrak{p} (K_m)| & = & 
|\Gal (M_\mathfrak{p} (K_m) /L(K_m))| \cdot |\Gal (L (K_m) /L' (K_m))| \cdot 
|\Gal (L' (K_m) /K_m)| \\
& = & |\mathfrak{X}_\mathfrak{p} (K)| \cdot |X' (K_m)| 
\end{array} \]
holds for all $m \geq n$.
\end{proof}

As a corollary to the above proposition, we also obtain the following.

\begin{cor}\label{Consequence_Fukuda_analog}
Let $n$ be a non-negative integer.
Suppose that $K$ satisfies the assumptions of Proposition \ref{Bn_Dn} at $n$.
Then the following are equivalent.
\begin{itemize}
\item[(i)] $|\mathfrak{X}_\mathfrak{p} (K_n)| = |\mathfrak{X}_\mathfrak{p} (K_{n+1})|$.
\item[(ii)] $|X' (K_n)| = |X' (K_{n+1})|$.
\item[(iii)] $M'_\mathfrak{p} (K_n) = L' (K_n)$.
\end{itemize}
In particular, if $A (K)$ is trivial and $|X|$ is finite,
then the stabilizations of $|\mathfrak{X}_\mathfrak{p} (K_m)|$
and $|X' (K_m)|$ occur at the same layer.
\end{cor}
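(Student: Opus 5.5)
The plan is to use the second assertion of Proposition \ref{Bn_Dn} directly. The hypotheses at $n$ propagate to all $m \geq n$; this was noted in the proof, since $i_m(A(K)) \subset D(K_m)$ follows from $i_n(A(K)) \subset D(K_n)$. Hence for every $m \geq n$ we have
\[ |\mathfrak{X}_\mathfrak{p} (K_m)| = |\mathfrak{X}_\mathfrak{p} (K)| \cdot |X'(K_m)|, \]
with $|\mathfrak{X}_\mathfrak{p}(K)|$ finite and independent of $m$. Applying this at $m=n$ and $m=n+1$ gives (i)$\Leftrightarrow$(ii) at once, since all orders involved are finite.

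For (ii)$\Leftrightarrow$(iii), I would compare $\mathfrak{X}'_\mathfrak{p}(K_m)$ with $X'(K_m)$. Here $\mathfrak{X}'_\mathfrak{p}(K_m)$ is the quotient of $\mathfrak{X}_\mathfrak{p}(K_m)$ by the decomposition group at $\mathfrak{p}'_m$. Since $M_\mathfrak{p}(K_m)/L(K_m)$ is ramified only at $\mathfrak{p}_m$, we get
\[ \Gal(M_\mathfrak{p}(K_m)/M'_\mathfrak{p}(K_m)) \to \Gal(L(K_m)/L'(K_m)) \cong D(K_m). \]
The cleaner route is Theorem \ref{quotient_restricted}(2), $\mathfrak{X}'_\mathfrak{p}(K_m) \cong X'_\infty/\omega_m X'_\infty$ with $X'_\infty := \mathfrak{X}'_\mathfrak{p}(K_\infty) = X'(K_\infty)$, together with Theorem \ref{quotient_unramified}(2), $X'(K_m) \cong X'_\infty/\nu_m Y'$. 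The aim is to show $\nu_m X'_\infty = \nu_m Y'$, i.e. that $i'_m$ is trivial on $A'(K)$. Since $i_m(A(K)) \subset D(K_m)$, the image of $A(K)$ in $A'(K_m)=A(K_m)/D(K_m)$ is trivial. Hence $i'_m(A'(K))$ is trivial and $X'(K_m) \cong X'_\infty/\nu_m X'_\infty$.

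Then (iii), i.e. $\mathfrak{X}'_\mathfrak{p}(K_n) = X'(K_n)$, means $\omega_n X'_\infty = \nu_n X'_\infty$, so $\nu_n X'_\infty = T\nu_n X'_\infty$. By topological Nakayama this gives $\nu_n X'_\infty = 0$, so $X'_\infty \cong X'(K_n)$ and in particular $|X'(K_{n+1})| = |X'(K_n)|$. This is exactly the argument of Theorem \ref{finiteness_criterion1}, or equivalently Theorem \ref{split_finiteness_criterion}, and proves (iii)$\Rightarrow$(ii).

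For (ii)$\Rightarrow$(iii), equality $|X'(K_n)|=|X'(K_{n+1})|$ gives $\nu_n X'_\infty = \nu_{n+1}X'_\infty = (\nu_{n+1}/\nu_n)\nu_n X'_\infty$. Since $\nu_{n+1}/\nu_n \in (p,T)$, Nakayama kills $\nu_n X'_\infty$, so $X'_\infty = X'(K_n)$ is finite. Now count: $|\mathfrak{X}'_\mathfrak{p}(K_n)| = |X'_\infty/\omega_n X'_\infty| = |X'_\infty| = |X'(K_n)|$, which gives (iii). The point to check carefully here is finiteness of the orders (all fine, by the finiteness of $|\mathfrak{X}_\mathfrak{p}(K_m)|$ from the proposition's proof).

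For the final assertion, take $A(K)$ trivial. Then the hypotheses hold at $n=0$: $i_0(A(K)) = 0 \subset D(K)$, $|\mathfrak{X}_\mathfrak{p}(K)| \leq |X| < \infty$, and the unit rank condition is implicit in the proposition's standing assumptions. Applying the equivalence (i)$\Leftrightarrow$(ii) at every $m \geq 0$ shows that the first layer where $|\mathfrak{X}_\mathfrak{p}(K_m)|$ stops growing coincides with the first for $|X'(K_m)|$. Once stable, both stay stable, by Remark \ref{Fukuda_analog} and the displayed product formula. The main obstacle I expect is verifying that $i'_m$ is trivial, i.e. correctly translating $i_m(A(K)) \subset D(K_m)$ into the module-theoretic statement $\nu_m X'_\infty = \nu_m Y'$ via the description of $i'_m$ as multiplication by $\nu_m$.
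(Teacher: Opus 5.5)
Your proposal is correct and follows the paper's route. You get (i)$\Leftrightarrow$(ii) from the product formula in the second assertion of Proposition~\ref{Bn_Dn} at $m=n$ and $m=n+1$, and (ii)$\Leftrightarrow$(iii) by deducing from $i_n(A(K))\subset D(K_n)$ that $i'_n(A'(K))$ is trivial, then applying Theorem~\ref{split_finiteness_criterion} together with a Fukuda-type argument. The only difference is that you write out the Nakayama step with $\nu_{n+1}/\nu_n\in(p,T)$ for (ii)$\Rightarrow$(iii), which the paper only cites as ``a variant of Fukuda's theorem''.
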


\begin{proof}
The equivalence (i) $\Leftrightarrow$ (ii) immediately follows from the 
second assertion of Proposition \ref{Bn_Dn}.

The equivalence (ii) $\Leftrightarrow$ (iii) follows from
Theorem \ref{split_finiteness_criterion} (and a variant of Fukuda's theorem
\cite[Theorem 1(1)]{Fuku94}).
Indeed, assumptions of Proposition \ref{Bn_Dn} contain that
$i_n (A (K)) \subset D (K_n)$.
This implies that $i'_n (A' (K))$ is trivial.
Hence we can apply Theorem \ref{split_finiteness_criterion} for $n$.
The equivalence can be shown from this.
\end{proof}

Concerning Proposition \ref{Bn_Dn}, we give several remarks.
This proposition can be seen as 
one of relationships between our results and Fukuda-Taya's results. 
For the case where $K$ is a real quadratic field, 
Fukuda-Taya \cite{F-T} gave a result concerning the stabilization of $|X (K_n)|$.
Their result can be generalized as the following.

\begin{prop}[{cf. \cite[Lemma 11]{F-T}}]\label{criterion_Bn_Dn}
Assume that $|\mathfrak{X}_\mathfrak{p} (K)|$ is finite.
If $|D (K_n)| = |\mathfrak{X}_\mathfrak{p} (K)|$ for some $n$, then  
$|X (K_\infty)| = |X (K_n)|$. 
\end{prop}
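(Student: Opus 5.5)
The plan is to compare $A(K_m)$ with $A(K_n)$ for every $m \geq n$ through the norm map $N_{m,n} : A(K_m) \to A(K_n)$, and to show that this map is bijective. Then $|A(K_m)| = |A(K_n)|$ for all $m \geq n$, which gives $|X(K_\infty)| = |X(K_n)|$ because $X(K_\infty) = \varprojlim X(K_m)$.

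\emph{Step 1: a uniform upper bound on invariants.} Put $G_m = \Gal(K_m/K)$. The first part of the proof of Lemma \ref{finiteness_B_n} works for every $m$, not only large ones:
\[ |A(K_m)^{G_m}| = |X(K_m)_{G_m}| \leq |\mathfrak{X}_\mathfrak{p}(K_m)_{G_m}| = |\mathfrak{X}_\mathfrak{p}(K)|. \]
Here the inequality holds because $X(K_m)$ is a quotient of $\mathfrak{X}_\mathfrak{p}(K_m)$. The equality $\mathfrak{X}_\mathfrak{p}(K_m)_{G_m} \cong \mathfrak{X}_\mathfrak{p}(K)$ was noted in that proof; it can also be read off from $\mathfrak{X}_\mathfrak{p}(K_m)\cong X/\omega_m X$ (Theorem \ref{quotient_restricted}), since then $\mathfrak{X}_\mathfrak{p}(K_m)_{G_m} = X/TX \cong \mathfrak{X}_\mathfrak{p}(K)$.

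\emph{Step 2: $D$ is invariant and the norm maps it onto $D(K_n)$.} The prime $\mathfrak{p}'_m$ is the unique prime above $\mathfrak{p}'$, so it is $G_m$-stable. Hence $D(K_m) = A(K_m)\cap\langle c(\mathfrak{p}'_m)\rangle \subset A(K_m)^{G_m}$. Since $\mathfrak{p}'$ is totally ramified in $K_\infty$ by (K4), we have $N_{m,n}(\mathfrak{p}'_m) = \mathfrak{p}'_n$. Write $D(K_m)$ as generated by $c(\mathfrak{p}'_m)^{e}$, with $e$ the prime-to-$p$ part of the class number, as in the proof of Lemma \ref{D_n_generator}. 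Then $N_{m,n}$ maps $D(K_m)$ onto $D(K_n)$; the different prime-to-$p$ parts at levels $m$ and $n$ cause no trouble, because raising to a prime-to-$p$ power is an automorphism of a $p$-group. Combining this with Step 1 and the hypothesis gives, for all $m \geq n$,
\[ |\mathfrak{X}_\mathfrak{p}(K)| = |D(K_n)| \leq |D(K_m)| \leq |A(K_m)^{G_m}| \leq |\mathfrak{X}_\mathfrak{p}(K)|. \]
Therefore $A(K_m)^{G_m} = D(K_m)$, and $N_{m,n} : D(K_m)\to D(K_n)$ is an isomorphism.

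\emph{Step 3: injectivity and surjectivity of $N_{m,n}$.} The kernel $\ker N_{m,n}$ is a finite $p$-group stable under the $p$-group $G_m$. If it were nonzero, it would contain a nonzero $G_m$-fixed element, by the standard fixed-point lemma for $p$-groups acting on $p$-groups. That element would lie in $A(K_m)^{G_m} = D(K_m)$, on which $N_{m,n}$ is injective by Step 2, a contradiction. So $N_{m,n}$ is injective. It is surjective because $K_m/K_n$ is totally ramified at $\mathfrak{p}'$, so $L(K_n)\cap K_m = K_n$, and class field theory identifies the norm with restriction $X(K_m)\to X(K_n)$, which is onto. Hence $|A(K_m)| = |A(K_n)|$ for all $m\geq n$, and the assertion follows.

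The main point needing care is Step 1 for \emph{all} $m$: the isomorphism $\mathfrak{X}_\mathfrak{p}(K_m)_{G_m}\cong\mathfrak{X}_\mathfrak{p}(K)$ is exactly what is needed, and I would justify it via Theorem \ref{quotient_restricted} as indicated. The rest is routine bookkeeping of $p$-parts.
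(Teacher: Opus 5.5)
Your proof is correct and follows the same route as the paper's outline. You bound $|A(K_m)^{G_m}|$ above by $|\mathfrak{X}_\mathfrak{p}(K)|$, deduce $A(K_m)^{G_m}=D(K_m)$ with the norm inducing $D(K_m)\cong D(K_n)$, and then get injectivity on all of $A(K_m)$ by the fixed-point argument from Greenberg's Theorem 2. Your write-up also makes explicit two things the paper leaves implicit: that the bound holds for every $m$ (the paper cites Lemma \ref{finiteness_B_n}, which is stated only for large $n$), and that the norm maps $D(K_m)$ onto $D(K_n)$.
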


\begin{proof}[Outline of the proof]
Assume that $|D (K_n)| = |\mathfrak{X}_\mathfrak{p} (K)|$.
Note that if this occurs, then 
$|A (K_n)^{\Gal (K_n/K)}| = |\mathfrak{X}_\mathfrak{p} (K)|$ 
is also satisfied since $D (K_n) \subset A (K_n)^{\Gal (K_n/K)}$ 
(recall also Lemma \ref{finiteness_B_n}).

The original argument is also applicable to our situation.
Take an arbitrary integer $m$ greater than $n$.
Then $A (K_n)^{\Gal (K_n/K)} = D (K_n)$ and 
$A (K_m)^{\Gal (K_m/K)} = D (K_m)$.
Since $|D (K_m)| = |D (K_n)|$, 
the norm map induces an isomorphism $D (K_m) \cong D (K_n)$.
This yields that the map 
$A (K_m)^{\Gal (K_m/K)} \to A (K_n)^{\Gal (K_n/K)}$ 
induced from the norm map is an isomorphism,  
and one can show $A (K_m) \cong A (K_n)$ from this
(cf. the proof of \cite[Theorem 2]{Gre76}).
\end{proof}

\begin{rem}
Suppose that $K$ and $n$ satisfy the assumptions of Proposition \ref{Bn_Dn}.
In this case, we can also show the assertion of Proposition \ref{criterion_Bn_Dn} 
by using our Theorem \ref{split_finiteness_criterion}.
Suppose that $|D(K_n)| = |\mathfrak{X}_\mathfrak{p} (K)|$.
Then, by Proposition \ref{Bn_Dn}, we see that
$M_\mathfrak{p} (K_n) = L (K_n)$.
Recall that $i_n (A (K)) \subset D(K_n)$ is satisfied  
(this is one of assumptions of Proposition \ref{Bn_Dn},
but this is actually a necessary condition for
$|D(K_n)| = |A (K_n)^{\Gal (K_n /K)}|$).
As mentioned in the proof of Corollary \ref{Consequence_Fukuda_analog},
this implies that $i'_n (A' (K))$ is trivial.
Then by Theorem \ref{split_finiteness_criterion},
$|X' (K_n)| = |X' (K_m)|$ is satisfied for all $m \geq n$.
Moreover, since $|D(K_n)| \leqq |D (K_m)|$,
we see that $|D(K_n)| = |\mathfrak{X}_\mathfrak{p} (K)|
= |D (K_m)|$.
From these equations, we conclude that
$|X (K_m)| = |X (K_n)|$ for all $m \geq n$.
\end{rem}

For the case of real quadratic fields, the converse of 
Proposition \ref{criterion_Bn_Dn} 
is essentially shown in \cite{FKKS}.
We can obtain a similar result in our situation.

\begin{prop}[{cf. \cite[Lemma 7.1 (1)]{FKKS}}]\label{converse_criterion_Bn_Dn}
Assume that $\rankZ E(K)=\rankZ E(F) + 1$.
If $|X(K_\infty)|= |X(K_n)|$ for some $n \geq 0$, then 
$|D (K_n) |= |\mathfrak{X}_\mathfrak{p} (K)|$ holds.
\end{prop}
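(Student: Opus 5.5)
The strategy is to compute $|\overline{A}(K_m)^{G_m}|$ at a large layer $m$ through the unit index formula used in Proposition \ref{Bn_Dn}, where capitulation forces $\overline{A}(K_m)^{G_m}=D(K_m)$. We then push the resulting equality back down to $K_n$ with the norm map. Throughout, $G_m=\Gal(K_m/K)$ and $X=X(K_\infty)=\mathfrak{X}_\mathfrak{p}(K_\infty)$.

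\emph{Step 1: consequences of stabilization.} Since $|X|=|X(K_n)|$ is finite and $X(K_n)\cong X/\nu_nY$ (Theorem \ref{quotient_unramified}), we get $\nu_nY=0$. Moreover $|\mathfrak{X}_\mathfrak{p}(K_n)|\le |X|=|X(K_n)|$, so Lemma \ref{if_part_finite} gives $M_\mathfrak{p}(K_n)=L(K_n)$. Hence $Z_\mathfrak{p}(K_n)$, that is $\mathcal{U}_n/\mathcal{E}(K_n)$, is trivial. By Proposition \ref{inertia_subgroup_order} the same holds for every $m\ge n$, and also $|X(K_m)|=|X|$ for all $m\ge n$. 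In particular $\mathfrak{X}_\mathfrak{p}(K)$, being a quotient of $X$, is finite.

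\emph{Step 2: the invariant part at level $n$.} We have $|A(K_n)^{G_n}|=|X(K_n)_{G_n}|$, and $X(K_n)\cong X$. On the other hand, by Theorem \ref{quotient_restricted}, $\mathfrak{X}_\mathfrak{p}(K)\cong X/TX$. Therefore
\[
|A(K_n)^{G_n}|=|X/TX|=|\mathfrak{X}_\mathfrak{p}(K)|,
\]
and this holds for every $m\ge n$ in place of $n$. Since $D(K_n)\subset A(K_n)^{G_n}$, this already gives $|D(K_n)|\le|\mathfrak{X}_\mathfrak{p}(K)|$.

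\emph{Step 3: the reverse inequality at a large layer $m$.} For $m\ge n$, the norm $A(K_m)\to A(K_n)$ is surjective, because $K_m/K_n$ is totally ramified at $\mathfrak{p}$. Having equal orders, it is therefore an isomorphism. Its composite with the lift $i_{n,m}$ is multiplication by $p^{m-n}$. Choosing $m$ with $p^{m-n}A(K_n)=0$ forces $i_{n,m}=0$, and hence $i_m(A(K))$ is trivial.

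Now rerun the computation in the proof of Proposition \ref{Bn_Dn} at level $m$. Equation \eqref{equation_local_units} with $\mathcal{U}_m/\mathcal{E}(K_m)$ trivial gives
\[
|\mathcal{E}(K)/\Lnorm_{m,0}\mathcal{E}(K_m)|=\frac{p^m}{|\mathcal{U}_0/\mathcal{E}(K)|}.
\]
Claim 2 uses only the rank hypothesis and the finiteness of $\mathfrak{X}_\mathfrak{p}(K)$, so it identifies this local index with $(E(K):\Gnorm_{m,0}E(K_m))$. Feeding this into the genus formula gives
\[
|\overline{A}(K_m)^{G_m}|=|A(K)|\cdot|\mathcal{U}_0/\mathcal{E}(K)|=|\mathfrak{X}_\mathfrak{p}(K)|.
\]
Since $\overline{A}(K_m)^{G_m}=i_m(A(K))D(K_m)=D(K_m)$, we conclude $|D(K_m)|=|\mathfrak{X}_\mathfrak{p}(K)|$.

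Finally, the norm isomorphism $A(K_m)\to A(K_n)$ maps $c(\mathfrak{p}'_m)$ to $c(\mathfrak{p}'_n)$, so it carries $D(K_m)$ injectively into $D(K_n)$. This gives $|D(K_n)|\ge|\mathfrak{X}_\mathfrak{p}(K)|$, and combined with Step 2 the equality follows.

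The main obstacle is that we cannot assume $i_n(A(K))\subset D(K_n)$, which is why the argument passes to a large $m$ and descends by the norm. The point to verify carefully is that equation \eqref{equation_local_units} and Claim 2 in the proof of Proposition \ref{Bn_Dn} do not use that inclusion; as I read the proof, they do not.
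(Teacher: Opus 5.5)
Your argument is correct and is essentially the paper's proof. You pass to a layer $m>n$ where $i_m(A(K))$ capitulates (you prove this directly via the norm isomorphism $A(K_m)\cong A(K_n)$ instead of citing Greenberg), obtain $|D(K_m)|=|\mathfrak{X}_\mathfrak{p}(K)|$ from the computation of Proposition \ref{Bn_Dn} with $\mathcal{U}_m/\mathcal{E}(K_m)$ trivial, and descend to $K_n$ by the norm.

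Two simplifications are available, though your versions are valid. Since $i_m(A(K))$ is trivial, Proposition \ref{Bn_Dn} applies verbatim at level $m$, so there is no need to re-audit its proof. Also, the norm maps $D(K_m)$ onto $D(K_n)$ because it sends $c(\mathfrak{p}'_m)$ to $c(\mathfrak{p}'_n)$, so the upper bound from your Step 2 is not needed.
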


\begin{proof}
Suppose that $|X(K_\infty)|= |X(K_n)|$. 
Then $i_m (A(K))$ is trivial for all sufficiently large $m$ (\cite[Proposition 2]{Gre76}).
We take $m$ such that $i_m (A(K))$ is trivial and $m > n$.
For $K_m$, we can see that $|D (K_m) |= |\mathfrak{X}_\mathfrak{p} (K)|$ 
by using Theorem \ref{finiteness_criterion1} and Proposition \ref{Bn_Dn}, 

We use the argument given in the proof of \cite[Lemma 7.1 (1)]{FKKS}.
Note that  $|X(K_m)| = |X(K_n)|$ holds.
Thus, the map $A(K_m) \to A(K_n)$ induced from the norm map is an isomorphism.
Hence the restriction of this map induces an isomorphism from $D(K_m)$ to $D(K_n)$.
The assertion follows from this.
\end{proof}

We also remark that for the case where $\rankZ E (K) > \rankZ E (F) +1$,
there is an example such that the equation \eqref{eq_Bn_Dn} of
Proposition \ref{Bn_Dn} does not hold
(even when all other assumptions are satisfied).

\begin{example}
Take a real quadratic field $k$ such that all of the following conditions
are satisfied.
\begin{itemize}
\item $p$ splits in $k$,
\item $A (k)$ is trivial,  
\item $A (k_1)$ is non-trivial and $D (k_1)$ is trivial,
\item $X (k_\infty)$ is finite.
\end{itemize}
We can find an example of such $k$ in, e.g., \cite{F-K} (see also Section \ref{section_quadratic}).
First, we note that Proposition \ref{Bn_Dn} is applicable for $k$.
That is,
\[ | \mathcal{U}_n /\mathcal{E} (k_n) | \cdot | D(k_n) | =
| \mathfrak{X}_\mathfrak{p} (k)| \]
holds for all $n$.
Moreover, since $D (k_1)$ is trivial, we obtain the inequality that
\[ | \mathfrak{X}_\mathfrak{p} (k)| = | \mathcal{U}_n /\mathcal{E} (k_n) |
< | \mathcal{U}_n /\mathcal{E} (k_n) | \cdot |A (k_1)|
= | \mathfrak{X}_\mathfrak{p} (k_1)|. \]
Next, we put $F = \mathbb{B}_1$ and $K = k_1$.
Then, $F$ and $K$ also satisfy the assumptions of Section \ref{assumptions}.
We note that $K_n = k_{n+1}$ and $K_\infty = k_\infty$.
We also note that $| \mathfrak{X}_\mathfrak{p} (K)|$ is finite.
Moreover, $i_n (A (K))$ is trivial for a sufficiently large $n$
because $| X (K_\infty) |$ is finite (\cite[Proposition 2]{Gre76}).
However, $\rankZ E (K) > \rankZ E (F) +1$.
By the above observation, we see that
$| \mathfrak{X}_\mathfrak{p} (k)| < | \mathfrak{X}_\mathfrak{p} (K)|$.
Hence,
\[ | \mathcal{U}_{n+1} /\mathcal{E} (K_n) | \cdot | D(K_n) | =
| \mathfrak{X}_\mathfrak{p} (k)| < | \mathfrak{X}_\mathfrak{p} (K)|. \]
\end{example}

It seems that our criterion is useful even if 
$A (K_n)^{\Gal (K_n / K)} \neq D(K_n)$ for all $n$.

\subsection{Fukuda-Taya's invariants and criteria}\label{subsection_FT}

In this subsection, we assume that 

\medskip

(R1) $k$ is a real quadratic field, and $p$ is an odd prime number which splits in $k$.

\medskip

We shall recall the invariants $n_0^{(r)}$, $n_2^{(r)}$
defined in Fukuda-Taya's paper \cite{F-T}.
These invariants play important roles for their criteria  
on Greenberg's conjecture.

\begin{definition}[cf. Fukuda-Taya \cite{F-T}, Taya \cite{Taya}]
Assume that $k$ and $p$ satisfy (R1).
Let $r$ be a non-negative integer.
We denote by $v_{\mathfrak{p}} ( \cdot )$ the normalized additive 
valuation corresponding to $\mathfrak{p}$.
Let $n_0^{(r)}$ be the minimal number of 
$v_{\mathfrak{p}} (\Gnorm_{r,0} (\beta)^{p-1} -1)$,
where $\beta$ runs runs $\mathfrak{p}'_r$-units of $k_r$,
Let $n_2^{(r)}$ be the minimal number of 
$v_{\mathfrak{p}} (\Gnorm_{r,0} (\varepsilon)^{p-1} -1)$,
where $\varepsilon$ runs units of $k_r$.
\end{definition}

Note that $n_0^{(r)} \geq n_2^{(r)}$.
The following is a partial result given in
Fukuda-Taya \cite{F-T} and Taya \cite{Taya}.

\begin{thm}[cf. Fukuda-Taya \cite{F-T}, Taya \cite{Taya}]\label{Fukuda_Taya}
Assume that $k$ and $p$ satisfy (R1).
Let $r$ be a positive integer.
Assume also that $A (k) = D (k)$, or $i_r (A (k))$ is trivial.
If $n_2^{(r)} = r +1$ or $n_0^{(r)} = r+1$, then
$| X (k_\infty) |$ is finite.
\end{thm}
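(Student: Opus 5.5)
The plan is to translate the Fukuda--Taya conditions into statements about the groups $\mathcal{U}_r / \mathcal{E} (k_r) \cong Z_\mathfrak{p} (k_r)$ and $\Gal (M_\mathfrak{p} (k_r)/L' (k_r))$, and then apply Proposition \ref{Bn_Dn} together with Proposition \ref{criterion_Bn_Dn}. Here we take $F = \mathbb{Q}$, $K = k$, and $p$ odd. Then $\rankZ E(k) = \rankZ E(\mathbb{Q}) + 1$ holds automatically.

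\emph{Local computation.} The first step is to describe the local norm. We have $\mathcal{U}_r$, the principal units of $(\mathbb{B}_r)_{p}$, and its norm $\Lnorm_{r,0}$ to $\mathbb{Q}_p$. By Lemma \ref{lem_cyclic_F_n}(1) and the diagram in the proof of Proposition \ref{inertia_subgroup_order}, the map $\Lnorm_{r,0}$ identifies $\mathcal{U}_r / \mathcal{E} (\mathbb{B}_r)$ with $1 + p^{r+1} \mathbb{Z}_p$, since $\mathcal{E} (\mathbb{B}_r)$ is killed, up to torsion, by the norm. Hence
\[ \mathcal{U}_r / \mathcal{E} (k_r) \cong (1+p^{r+1}\mathbb{Z}_p) / \overline{\Lnorm_{r,0} \mathcal{E} (k_r)} . \]
Raising to the power $p-1$ projects a unit of $k_r$ onto its principal-unit component. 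Therefore the condition $n_2^{(r)} = r+1$ says exactly that some unit norm generates $1+p^{r+1}\mathbb{Z}_p$. This gives $Z_\mathfrak{p} (k_r) = 1$, that is, $M_\mathfrak{p} (k_r) = L (k_r)$.

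For $n_0^{(r)}$ the argument is the same, but with $\mathfrak{p}'_r$-units in place of units. By class field theory, $\Gal (M_\mathfrak{p} (k_r) / L'(k_r))$ is $\mathcal{U}_r$ modulo the closure of the $\mathfrak{p}'_r$-units congruent to $1$ mod $\mathfrak{p}_r$. Here we use that $L'(k_r) = L^\dagger (k_r)$ by Lemma \ref{D_n_generator}. So $n_0^{(r)} = r+1$ gives $M_\mathfrak{p} (k_r) = L' (k_r)$. In particular $Z_\mathfrak{p} (k_r)$ and $D(k_r)$ are both trivial. Since $n_0^{(r)} \geq n_2^{(r)}$ and both are at least $r+1$, either hypothesis yields $Z_\mathfrak{p} (k_r) = 1$. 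Also, $\mathfrak{X}_\mathfrak{p}(k_r) = X(k_r)$ is then finite. Its $G_r$-coinvariant quotient $\mathfrak{X}_\mathfrak{p} (k)$ is therefore finite as well, by the argument in the proof of Lemma \ref{finiteness_B_n}.

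\emph{Global step.} Both alternatives, $A(k) = D(k)$ and $i_r (A(k)) = 1$, imply $i_r (A(k)) \subset D(k_r)$. In the first case this holds because $i_r$ sends the classes of $\mathfrak{p}, \mathfrak{p}'$ to powers of those of $\mathfrak{p}_r, \mathfrak{p}'_r$. So all assumptions of Proposition \ref{Bn_Dn} hold at $n = r$. Equation \eqref{eq_Bn_Dn} with $|\mathcal{U}_r/\mathcal{E}(k_r)| = 1$ gives $|D(k_r)| = |\mathfrak{X}_\mathfrak{p} (k)|$. Proposition \ref{criterion_Bn_Dn} then yields $|X(k_\infty)| = |X(k_r)| < \infty$. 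In the case $i_r(A(k)) = 1$ one could alternatively apply Theorem \ref{finiteness_criterion1} directly.

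The main obstacle is the local identification in the first step. One must check carefully that the norm from $(\mathbb{B}_r)_p$ maps $\mathcal{U}_r/\mathcal{E}(\mathbb{B}_r)$ isomorphically onto $1+p^{r+1}\mathbb{Z}_p$. One must also check that the minimal valuation condition $v_\mathfrak{p}(\Gnorm_{r,0}(\varepsilon)^{p-1}-1) = r+1$ is equivalent to this image being generated by global norms. A further point is that global norms and local norms agree at $\mathfrak{p}$, because $(k_r)_{\mathfrak{p}_r} = (\mathbb{B}_r)_p$. For the $n_0^{(r)}$ case there is an additional class field theoretic step: $\mathfrak{p}'_r$-units with decomposition at $\mathfrak{p}'_r$ quotiented out describe exactly $L'(k_r)$ inside $M_\mathfrak{p}(k_r)$.
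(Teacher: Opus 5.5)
\textbf{The $n_0^{(r)}$ branch has a genuine gap; the $n_2^{(r)}$ branch is fine.} Your local computation for $n_2^{(r)}$ even upgrades Proposition \ref{relation_Fukuda_Taya}(1) to an equivalence. Both of your arguments for $n_0^{(r)}$, however, fail.

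First, the class field theory is off by one field. The closure $\mathcal{E}^\dagger(k_r)$ of the $\mathfrak{p}'_r$-units congruent to $1$ mod $\mathfrak{p}_r$ appears only when one also kills the Frobenius of $\mathfrak{p}'_r$. So $\mathcal{U}_r/\mathcal{E}^\dagger(k_r)\cong \Gal(M'_\mathfrak{p}(k_r)/L'(k_r))$, as in the proof of Proposition \ref{relation_Fukuda_Taya}(2). The group $\Gal(M_\mathfrak{p}(k_r)/L'(k_r))$ is instead an extension of $D(k_r)$ by $\mathcal{U}_r/\mathcal{E}(k_r)$. By Proposition \ref{Bn_Dn}, under the hypotheses you verified, its order is $|\mathfrak{X}_\mathfrak{p}(k)|$. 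If your identification were right, $n_0^{(r)}=r+1$ would force $|\mathfrak{X}_\mathfrak{p}(k)|=1$, i.e.\ $X(k_\infty)=0$ by Corollary \ref{cor_Hachimori}(1). That is false in general: take any $k$ with $0<|X(k_\infty)|<\infty$ and $r$ large; then $n_0^{(r)}=r+1$ holds by Lemma \ref{if_part_finite} and Proposition \ref{relation_Fukuda_Taya}.

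Second, the fallback via $n_0^{(r)}\ge n_2^{(r)}$ goes the wrong way. Every unit of $k_r$ is a $\mathfrak{p}'_r$-unit, so the minimum defining $n_0^{(r)}$ runs over a larger set. Hence $r+1\le n_0^{(r)}\le n_2^{(r)}$, and the remark $n_0^{(r)}\geq n_2^{(r)}$ just before the theorem should read $\le$. So $n_0^{(r)}=r+1$ is the \emph{weaker} hypothesis and yields neither $n_2^{(r)}=r+1$ nor $Z_\mathfrak{p}(k_r)=1$.

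\textbf{How to repair it.} Rerun your local computation with $\mathcal{E}^\dagger(k_r)\supset\mathcal{E}(\mathbb{B}_r)$ in place of $\mathcal{E}(k_r)$. Local and global norms still agree at $\mathfrak{p}$, because $\mathfrak{p}_r$ is the only prime above $\mathfrak{p}$. What you get is $\mathcal{U}_r=\mathcal{E}^\dagger(k_r)$, i.e.\ $M'_\mathfrak{p}(k_r)=L'(k_r)$. The missing idea is to finish with $X'$ instead of $X$:
\begin{itemize}
\item You showed $i_r(A(k))\subset D(k_r)$ under either alternative, so $i'_r(A'(k))$ is trivial.
\item Theorem \ref{split_finiteness_criterion} then gives $X'(k_\infty)\cong X'(k_r)$, which is finite.
\item $\mathfrak{X}_\mathfrak{p}(k)$ is finite: $\mathcal{E}(k)$ contains $\varepsilon^{p-1}\neq 1$ for the fundamental unit $\varepsilon$, so it has finite index in $\mathcal{U}_0\cong 1+p\mathbb{Z}_p$.
\item Lemma \ref{split_finiteness_implies} then gives $|X(k_\infty)|<\infty$.
\end{itemize}
Since $n_2^{(r)}=r+1$ implies $n_0^{(r)}=r+1$, this single argument covers both hypotheses. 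The paper itself does not prove this theorem; it quotes it from Fukuda--Taya and Taya, and establishes only the converse implications in Proposition \ref{relation_Fukuda_Taya}.
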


\begin{rem}
If $A (k) = D (k)$, then $i_r (A (k))$ is trivial for all
sufficiently large $r$ (see \cite[pp.266--267]{Gre76}).
\end{rem}

For the case where $p=2$, a similar result to the above theorem is also known 
(Fukuda \cite{Fuku10}, Fukuda-Komatsu-Kumakawa-Sasaki \cite{FKKS}).

Concerning our results given in Section \ref{subsection_finiteness},
the following holds.

\begin{prop}\label{relation_Fukuda_Taya}
Assume that $k$ and $p$ satisfy (R1).
Let $r$ be a non-negative integer.

\smallskip

\noindent (1) If $M_\mathfrak{p} (k_r) = L(k_r)$, then $n_{2}^{(r)} = r+1$.

\smallskip

\noindent (2) If $M'_\mathfrak{p} (k_r) = L' (k_r)$, then $n_{0}^{(r)} = r+1$.
\end{prop}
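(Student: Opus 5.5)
We reduce both assertions to a local computation in $\mathcal{U}_r$, the principal units of $(k_r)_{\mathfrak{p}_r} = \mathbb{Q}_p(\zeta_{p^{r+1}})^+ \cdots$. More precisely, since $k_\mathfrak{p}=\mathbb{Q}_p$, this completion is the $r$th layer of the cyclotomic $\mathbb{Z}_p$-extension of $\mathbb{Q}_p$. The local norm $N_r$ from $\mathcal{U}_r$ to $\mathcal{U}_0 = 1+p\mathbb{Z}_p$ has image exactly $1+p^{r+1}\mathbb{Z}_p$, by local class field theory: this is the norm group of the $r$th cyclotomic layer, totally ramified of degree $p^r$. For $x\in k_r$ (or $\mathcal{U}_r$) we have $\Gnorm_{r,0}(x)$ viewed in $k_\mathfrak{p}$ equal to $N_r$ of its image, because $\mathfrak{p}$ has a unique prime above it. Hence $v_\mathfrak{p}(\Gnorm_{r,0}(\varepsilon)^{p-1}-1)\ge r+1$ always; the factor $p-1$ kills the torsion/Teichmüller part so that we land in $\mathcal{U}_0$. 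Thus $n_2^{(r)}, n_0^{(r)}\ge r+1$ automatically, and equality amounts to some global element whose norm generates $\mathcal{U}_0/ \mathcal{U}_0^{p^{r+1}}$ at level $1+p^{r+1}$, i.e.\ $N_r(\varepsilon^{p-1}) \notin 1+p^{r+2}\mathbb{Z}_p$.

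For (1), assume $M_\mathfrak{p}(k_r)=L(k_r)$, i.e.\ $\mathcal{U}_r/\mathcal{E}(k_r)$ is trivial, so $\mathcal{E}(k_r)=\mathcal{U}_r$. Since $N_r$ maps $\mathcal{U}_r$ onto $1+p^{r+1}\mathbb{Z}_p$, the norm image of $\mathcal{E}(k_r)$ is $1+p^{r+1}\mathbb{Z}_p$. As $E^1(k_r)$ is dense in $\mathcal{E}(k_r)$ and $N_r$ is continuous, there is a global unit $\varepsilon\in E^1(k_r)$ with $N_r(\varepsilon)\in (1+p^{r+1}\mathbb{Z}_p)\setminus(1+p^{r+2}\mathbb{Z}_p)$, an open set. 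Then $\Gnorm_{r,0}(\varepsilon)^{p-1}$ has $v_\mathfrak{p}(\cdot-1)=r+1$, because raising to the $p-1$ power (a unit in $\mathbb{Z}_p$) preserves this level. So $n_2^{(r)}=r+1$.

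For (2) we do the analogous argument with $\mathfrak{p}'_r$-units. By class field theory $\Gal(M'_\mathfrak{p}(k_r)/L'(k_r))\cong \mathcal{U}_r/\overline{\mathcal{E}'}$, where $\mathcal{E}'$ is the closure in $\mathcal{U}_r$ of the $\mathfrak{p}'_r$-units congruent to $1$ mod $\mathfrak{p}_r$. Establishing this identification is the main step I expect to need care. It is the standard $S$-unit version (with $S=\{\mathfrak{p}'_r\}$ forced to split completely) of the formula $Z_\mathfrak{p}\cong\mathcal{U}_n/\mathcal{E}$ from \cite{Hachi}. We will derive it via the idele class group, replacing units by $\{\mathfrak{p}'_r\}$-units and the full class group by $A(k_r)/\langle c(\mathfrak{p}'_r)\rangle$, consistent with Lemma \ref{D_n_generator}. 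Granting it, the hypothesis gives $\overline{\mathcal{E}'}=\mathcal{U}_r$, and the same density/norm argument produces a $\mathfrak{p}'_r$-unit $\beta$ with $v_\mathfrak{p}(\Gnorm_{r,0}(\beta)^{p-1}-1)=r+1$, hence $n_0^{(r)}=r+1$. Alternatively, (2) follows from (1) whenever $M_\mathfrak{p}=L$, but in general we need the $S$-unit description.
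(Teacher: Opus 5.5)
Your proposal is correct and is essentially the paper's proof: both rest on $(\mathcal{U}_0:\Lnorm_{r,0}\mathcal{U}_r)=p^r$ and on the density in $\mathcal{U}_r$ of $E^1(k_r)$, resp.\ of the $\mathfrak{p}'_r$-units, the latter via the $S$-unit description of $\Gal(M'_\mathfrak{p}(k_r)/L'(k_r))$, which the paper also just cites from Hachimori. The differences are cosmetic: you use openness of $N_r^{-1}\bigl((1+p^{r+1}\mathbb{Z}_p)\setminus(1+p^{r+2}\mathbb{Z}_p)\bigr)$ where the paper approximates a fixed $u$ with $\Lnorm_{r,0}(u)=1+p^{r+1}$ by some $\varepsilon^{p-1}$, and you make the lower bound $n^{(r)}\ge r+1$ explicit where the paper leaves it implicit (also, the completion is the degree-$p^r$ subfield of $\mathbb{Q}_p(\zeta_{p^{r+1}})$, not its plus part, as your next sentence correctly says).
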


\begin{proof}
We note that $M_\mathfrak{p} (k_r) = L(k_r)$ if and only if
$\mathcal{U}_r = \mathcal{E} (k_r)$.
Let $\Lnorm_{r,0}$ be the norm map from $(k_r)_{\mathfrak{p}_r}$ to
$k_\mathfrak{p}$.
We recall that $(\mathcal{U}_0 : \Lnorm_{r,0} \mathcal{U}_r) =p^r$.

We claim that if $\mathcal{U}_r = \mathcal{E} (k_r)$,  
then there is a unit $\varepsilon$ of $k_r$ such that 
$v_{\mathfrak{p}} (\Gnorm_{r,0} (\varepsilon)^{p-1} -1) = r+1$.
In the rest of this paragraph, we shall show this.
We may identify $\mathcal{U}_0$ with $1 + p \mathbb{Z}_p$.
We can take an element $u \in \mathcal{U}_r$ such that
$\Lnorm_{r,0} (u) = 1 + p^{r+1}$.
By the assumption,
for an arbitrary positive integer $n$,
there is a global unit $\varepsilon \in E (k_r)$
such that $\varepsilon^{p-1} u^{-1} \equiv 1 \pmod{\mathfrak{p}_r^n \mathcal{O}}$.
(Here, $\mathcal{O}$ is the valuation ring of $(k_r)_{\mathfrak{p}_r}$.)
If $n$ is sufficiently large, we see that  
\[ \Lnorm_{r,0} (1 + \mathfrak{p}_r^n \mathcal{O}) \subset 1+ p^{r+2} \mathbb{Z}_p. \]
Hence,
\[ \Lnorm_{r,0} (\varepsilon^{p-1} u^{-1}) \equiv 1 \pmod{p^{r+2} \mathbb{Z}_p}, \]
and this implies that
\[ \Gnorm_{r,0} (\varepsilon)^{p-1} \equiv 1 + p^{r+1} \pmod{p^{r+2} \mathbb{Z}_p}. \]
From this, we then see that
$v_{\mathfrak{p}} (\Gnorm_{r,0} (\varepsilon)^{p-1} -1) = r+1$.

We also note that $M_\mathfrak{p}' (k_r) = L' (k_r)$ if and only if
$\mathcal{U}_r = \mathcal{E}^\dagger (k_r)$, 
where $\mathcal{E}^\dagger (k_r) \subset \mathcal{U}_r$ is the 
similar object to $\mathcal{E} (k_r)$ defined by using 
$\mathfrak{p}'_r$-units of $k_r$ instead of global units (cf. \cite{Hachi}).
Hence, by using the same argument as above,
we can see that if $\mathcal{U}_r = \mathcal{E}^\dagger (k_r)$,
then $n_{0}^{(r)} = r+1$.
\end{proof}

It seems that there are other connections between the results given in 
\cite{F-T} and our results.
For example, the inequality $n_2^{(r)} \leq n_2^{(r+1)} \leq n_2^{(r)}+1$ 
given in \cite[p.278]{F-T} 
may relate to our Proposition \ref{inertia_subgroup_order}.
However, it would be long to explain them in detail, 
we omit to state here.

We would like to emphasize that our results hold not only for real quadratic fields.
In particular, our result covers the cases such that 
$|A (K_n)^{\Gal (K_n/K)}| = |D (K_n)|$ never holds.
(Showing this equality seems crucial in several places of \cite{F-T}.)

\subsection{Criteria for the stabilization of the $p$-rank of 
$A (K_n)$}\label{subsection_rank}
In this subsection, we shall show the criterion for the finiteness
of the $p$-rank of $X (K_\infty)$.

\begin{lem}\label{necessary_rank_stability}
If $\rankp \mathfrak{X}_\mathfrak{p} (K_n) > \rankp X (K_n)$,
then $\rankp X (K_\infty) > \rankp X (K_n)$.
\end{lem}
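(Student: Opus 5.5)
This is the $p$-rank analogue of Lemma \ref{if_part_finite}, and I would prove it by the same chain of inequalities, with orders replaced by $p$-ranks. The chain I aim for is
\[ \rankp X (K_n) < \rankp \mathfrak{X}_\mathfrak{p} (K_n) \leq \rankp \mathfrak{X}_\mathfrak{p} (K_\infty) = \rankp X (K_\infty). \]
The first inequality is the hypothesis. The final equality is Proposition \ref{Iwasawa_modules_coincidence}: since $M_\mathfrak{p} (K_\infty) = L (K_\infty)$, the two modules $\mathfrak{X}_\mathfrak{p} (K_\infty)$ and $X (K_\infty)$ are the same object $X$.

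The only step needing comment is the middle inequality. By Theorem \ref{quotient_restricted} (1), $\mathfrak{X}_\mathfrak{p} (K_n) \cong X / \omega_n X$, so $\mathfrak{X}_\mathfrak{p} (K_n)$ is a quotient of $X$. A surjection $X \to X/\omega_n X$ induces a surjection
\[ X/pX \to (X/\omega_n X)/p(X/\omega_n X). \]
Hence the $p$-rank of a quotient is at most the $p$-rank of the module, which gives $\rankp \mathfrak{X}_\mathfrak{p} (K_n) \leq \rankp X$.

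If $\rankp X$ is infinite, the conclusion holds trivially, since $\rankp X (K_n)$ is finite. There is no real obstacle here; the essential input is just Hachimori's isomorphism together with the coincidence $M_\mathfrak{p} (K_\infty) = L (K_\infty)$.
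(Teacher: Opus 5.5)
Your proposal is correct and follows essentially the same route as the paper: the paper's proof is exactly the chain $\rankp X (K_n) < \rankp \mathfrak{X}_\mathfrak{p} (K_n) \leq \rankp \mathfrak{X}_\mathfrak{p} (K_\infty) = \rankp X (K_\infty)$, mirroring Lemma \ref{if_part_finite}. You justify the middle inequality explicitly via Theorem \ref{quotient_restricted} (1) and the last equality via Proposition \ref{Iwasawa_modules_coincidence}, which the paper leaves implicit.
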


\begin{proof}
We can show the assertion by using the same type argument to
the proof of Lemma \ref{if_part_finite}.
That is,
\[ \rankp X (K_n) <
\rankp \mathfrak{X}_\mathfrak{p} (K_n) \leq
\rankp \mathfrak{X}_\mathfrak{p} (K_\infty)
= \rankp X (K_\infty). \]
\end{proof}

\begin{thm}\label{rank_criterion1}
Let $n$ be a positive integer.
Assume that $i_n (A (K))$ is trivial.
Then $\rankp \mathfrak{X}_\mathfrak{p} (K_n) = \rankp X (K_n)$
if and only if
$\rankp X (K_\infty) = \rankp X (K_n)$.
\end{thm}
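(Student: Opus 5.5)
The argument mirrors the proof of Theorem \ref{finiteness_criterion1}, with orders replaced by $p$-ranks and the topological Nakayama lemma applied modulo $pX$. The ``only if'' direction is Lemma \ref{necessary_rank_stability} in contrapositive form. For the ``if'' direction we work with $X = \mathfrak{X}_\mathfrak{p}(K_\infty) = X(K_\infty)$ from Definition \ref{definition_X}, which is a finitely generated $\Lambda$-module by Proposition \ref{Iwasawa_modules_coincidence}.

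\emph{Step 1: rewrite $X(K_n)$.} Since $i_n(A(K))$ is trivial, the argument in the proof of Theorem \ref{finiteness_criterion1} gives $\nu_n X = \nu_n Y$. Combined with Theorem \ref{quotient_unramified}, this yields $X(K_n) \cong X/\nu_n X$.

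\emph{Step 2: rewrite $\mathfrak{X}_\mathfrak{p}(K_n)$.} By Theorem \ref{quotient_restricted}, $\mathfrak{X}_\mathfrak{p}(K_n) \cong X/\omega_n X$.

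\emph{Step 3: reduce mod $p$.} Since $\omega_n X \subset \nu_n X$, there is a natural surjection
\[ X/(\omega_n X + pX) \to X/(\nu_n X + pX). \]
Both sides are finite-dimensional $\mathbb{F}_p$-vector spaces; their dimensions are $\rankp \mathfrak{X}_\mathfrak{p}(K_n)$ and $\rankp X(K_n)$ respectively. So the hypothesis $\rankp \mathfrak{X}_\mathfrak{p}(K_n) = \rankp X(K_n)$ makes this surjection an isomorphism. Hence
\[ \nu_n X + pX = \omega_n X + pX = T\nu_n X + pX. \]

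\emph{Step 4: apply Nakayama.} Put $W = X/pX$, a finitely generated $\Lambda/p\Lambda = \mathbb{F}_p[[T]]$-module. Step 3 says $\nu_n W = T(\nu_n W)$. Now $\nu_n W$ is a finitely generated module over the local ring $\mathbb{F}_p[[T]]$ (compact, with $T$ in the maximal ideal), so the topological Nakayama lemma gives $\nu_n W = 0$, i.e.\ $\nu_n X \subset pX$. Therefore
\[ X/pX = X/(\nu_n X + pX) \cong X(K_n)/pX(K_n), \]
and consequently $\rankp X(K_\infty) = \rankp X(K_n)$.

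The only step needing care is Step 3. One must check that the comparison of ranks really is a comparison of two quotients of $X/pX$ by nested submodules. This holds because reducing $X/\omega_n X$ and $X/\nu_n X$ modulo $p$ gives exactly $X/(\omega_n X+pX)$ and $X/(\nu_n X+pX)$. The rest is a direct transcription of the proof of Theorem \ref{finiteness_criterion1}.
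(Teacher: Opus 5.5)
Your proof is correct and follows the paper's argument essentially verbatim. You identify $X(K_n)\cong X/\nu_n X$ and $\mathfrak{X}_\mathfrak{p}(K_n)\cong X/\omega_n X$, deduce $\nu_n X + pX = \omega_n X + pX$, and apply topological Nakayama to $(\nu_n X + pX)/pX$ to get $\nu_n X \subset pX$; your justification of that equality, via the surjection $X/(\omega_n X+pX)\to X/(\nu_n X+pX)$ between finite-dimensional $\mathbb{F}_p$-spaces of equal dimension, makes explicit a step the paper leaves implicit.
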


\begin{proof}
The ``only if'' part follows from Lemma \ref{necessary_rank_stability}.

We shall show the ``if'' part.
To show this,
we shall mimic the method of the proof of Fukuda's theorem 
\cite[Theorem 1 (2)]{Fuku94}.
As noted in the proof of Theorem \ref{finiteness_criterion1}, we see that
$X (K_n) \cong X / \nu_n X$ under the assumption that $i_n (A (K))$ is trivial.
Hence, the condition 
$\rankp \mathfrak{X}_\mathfrak{p} (K_n) = \rankp X (K_n)$ is expressed as 
$\rankp X / \nu_n X = \rankp X / \omega_n X$. 
This implies that 
\[ \nu_n X + p X = \omega_n X + p X. \]
As same as Fukuda's proof, we put $Z = (\nu_n X + p X)/ p X$.
Then,
\[ T Z = T((\nu_n X + p X)/ p X) =
(\omega_n X + p X)/ p X = (\nu_n X + p X)/ p X = Z. \]
By using topological Nakayama's lemma, we see that $Z$ is trivial,
and hence $\nu_n X \subset p X$.
This yields that
\[ \rankp X (K_n) = \rankp X / \nu_n X \geq
\rankp X / p X = \rankp X. \]
From this, we conclude that $\rankp X (K_n) = \rankp X (K_\infty)$.
\end{proof}

We can also obtain the $X' (K_\infty)$-version of 
Theorem \ref{rank_criterion1}.
However, we omit to state the details.

\begin{rem}\label{Fukuda_analog_rank}
Similarly to that mentioned in Remark \ref{Fukuda_analog},
we can also obtain the $\mathfrak{X}_\mathfrak{p} (K_\infty)$-version
of Fukuda's theorem \cite[Theorem 1(2)]{Fuku94}.
That is, if $\rankp \mathfrak{X}_\mathfrak{p} (K_n)
= \rankp \mathfrak{X}_\mathfrak{p} (K_{n+1})$ then
$\rankp X = \rankp \mathfrak{X}_\mathfrak{p} (K_n)$.
Moreover, in this situation, we also see that
$\rankp X (K_{n+1}) =
\rankp \mathfrak{X}_\mathfrak{p} (K_{n+1})$.
We shall show this.
By using Lemma \ref{lem_cyclic_F_n} (2), we see that
\[ \rankp X (K_{m}) \leq
\rankp \mathfrak{X}_\mathfrak{p} (K_m) \leq \rankp X (K_{m}) +1 \]
for all $m$.
If $\rankp X (K_n) =  \rankp X (K_{n+1}) =
\rankp \mathfrak{X}_\mathfrak{p} (K_{n+1}) -1$, then
$\rankp X (K_m) < \rankp X$ for all $m \geq n$.
It is a contradiction, and then the assertion follows.
\end{rem}

Concerning the above remark, 
there is the case such that 
$\rankp \mathfrak{X}_\mathfrak{p} (K_n)
< \rankp \mathfrak{X}_\mathfrak{p} (K_{n+1})$ 
but $\rankp \mathfrak{X}_\mathfrak{p} (K_{n+1})
=\rankp X (K_{n+1})$.
See Example \ref{example_rank_7}.

We give one more remark.

\begin{rem}\label{rem_trivial_cyclicity}
Assume that $\mathfrak{X}'_\mathfrak{p} (K)$ is trivial.
Then by Corollary \ref{cor_Hachimori}(2), $\mathfrak{X}'_\mathfrak{p} (K_n)$ is
trivial for every $n$.
This implies that $\mathfrak{X}_\mathfrak{p} (K_n)$ is cyclic
because $\Gal (M_\mathfrak{p} (K_n) / M'_\mathfrak{p} (K_n))$ is
the decomposition subgroup of $\mathfrak{X}_\mathfrak{p} (K_n)$
for $\mathfrak{p}'_n$
\end{rem}

Hence, for the problem whether $\rankp X (K_\infty)$ is finite or not,  
it is sufficient to consider only the case that
$\mathfrak{X}'_\mathfrak{p} (K)$ is not trivial.

In the remaining part of this subsection, we shall give further criteria.
Some of them will be used in Section \ref{section_quadratic}.
We shall give results under the assumption that
``$|\mathfrak{X}_\mathfrak{p} (K)|$ is finite''.
If this is satisfied, 
we see can that $|\mathfrak{X}_\mathfrak{p} (K_n)|$ is also finite
for every positive integer $n$ 
by using Proposition \ref{inertia_subgroup_order}.

Before stating the criteria, we give an auxiliary result.
When $K$ is a real quadratic field, the behavior of 
$|A (K_n)^{\Gal (K_n/K)}|$ is well studied 
(see \cite{Fuku10}, \cite{F-K}, \cite{FKKS}, \cite{F-T}, \cite{Taya}).
The following is obtained inspired by the result for real quadratic fields.

\begin{lem}\label{unramified_subextension}
Assume that $\mathfrak{X}_\mathfrak{p} (K)$ is non-trivial and finite.
We put $|\mathfrak{X}_\mathfrak{p} (K)| = p^m$.
Assume also that $A (K)$ is trivial.
For an integer $n$ satisfying $0 < n \leq m$,
we denote by $L^{s}_n/K_n$ the maximal unramified subextension of
$M_\mathfrak{p} (K) K_n /K_n$.
Then, $[ L^{s}_n : K_n ] = p^n$.
\end{lem}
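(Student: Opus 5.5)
The plan is to exploit Proposition \ref{Iwasawa_modules_coincidence}, which says that any abelian pro-$p$ extension of $K_\infty$ unramified outside $\mathfrak{p}$ is in fact unramified. From this we will get that the inertia group at $\mathfrak{p}$ in the compositum $M_\mathfrak{p}(K)K_n/K$ is cyclic of order exactly $p^m$. The degree $[L^s_n:K_n]$ can then be read off by elementary group theory.

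\emph{Step 1 (structure of $M_\mathfrak{p}(K)/K$).} Since $A(K)$ is trivial, we have $L(K)=K$, so $\mathfrak{X}_\mathfrak{p}(K)=Z_\mathfrak{p}(K)\cong \mathcal{U}_0/\mathcal{E}(K)$. By Lemma \ref{lem_cyclic_F_n}(2) this group is procyclic, hence cyclic of order $p^m$. Also $M_\mathfrak{p}(K)/K$ is unramified at $\mathfrak{p}'$. Moreover, since $L(K)=K$, it is totally ramified at $\mathfrak{p}$.

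\emph{Step 2 (disjointness).} The extension $K_n/K$ is totally ramified at $\mathfrak{p}'$ by (K4), while $M_\mathfrak{p}(K)/K$ is unramified at $\mathfrak{p}'$. Hence $M_\mathfrak{p}(K)\cap K_n=K$, and
\[ G:=\Gal(M_\mathfrak{p}(K)K_n/K)\cong \Gal(M_\mathfrak{p}(K)/K)\times \Gal(K_n/K)\cong \mathbb{Z}/p^m\times\mathbb{Z}/p^n . \]
The same disjointness holds with $K_\infty$ in place of $K_n$, giving $\Gal(M_\mathfrak{p}(K)K_\infty/K)\cong \mathbb{Z}/p^m\times \Gal(K_\infty/K)$.

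\emph{Step 3 (main step: the inertia group is cyclic).} The extension $M_\mathfrak{p}(K)K_\infty/K_\infty$ is abelian and unramified outside the prime above $\mathfrak{p}$. It is therefore contained in $M_\mathfrak{p}(K_\infty)$, which equals $L(K_\infty)$ by Proposition \ref{Iwasawa_modules_coincidence}, so it is unramified.

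Let $I_\infty$ be an inertia group at $\mathfrak{p}$ in $\Gal(M_\mathfrak{p}(K)K_\infty/K)$. By the previous paragraph, $I_\infty$ meets $\Gal(M_\mathfrak{p}(K)K_\infty/K_\infty)$ trivially. Hence $I_\infty$ injects into $\Gal(K_\infty/K)\cong\mathbb{Z}_p$, so $I_\infty$ is procyclic.

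The inertia group $I\subset G$ at $\mathfrak{p}$ is the image of $I_\infty$, so it is cyclic. Since both factors of $G$ are totally ramified at $\mathfrak{p}$, $I$ surjects onto $\mathbb{Z}/p^m$ and onto $\mathbb{Z}/p^n$. Because $I$ is cyclic and $n\le m$, we get $|I|=p^m$. This step is the crux of the argument; the remaining steps are bookkeeping.

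\emph{Step 4 (counting).} The extension $M_\mathfrak{p}(K)K_n/K_n$ is unramified outside $\mathfrak{p}_n$. Its inertia group at $\mathfrak{p}_n$ is
\[ I\cap \Gal(M_\mathfrak{p}(K)K_n/K_n), \]
which is the kernel of the surjection $I\to\mathbb{Z}/p^n$ and so has order $p^{m-n}$. Therefore
\[ [L^s_n:K_n]=\frac{p^m}{p^{m-n}}=p^n. \]
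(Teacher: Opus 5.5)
Your proof is correct, but it takes a genuinely different route from the paper's.

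\emph{The paper's route.} It first identifies $L^{s}_n$ with the fixed field of $X(K_n)_{G_n}$, using $\mathfrak{X}_\mathfrak{p}(K_n)_{G_n}\cong\mathfrak{X}_\mathfrak{p}(K)$ from the proof of Lemma \ref{finiteness_B_n}. It then applies the genus formula:
$|A(K_n)^{G_n}| = p^n/(E(K):E(K)\cap \Gnorm_{n,0}(K_n^\times))$.
Finally it shows that every unit of $K$ is a norm from $K_n$. By the product formula and Hasse's norm theorem, this reduces to $\mathcal{E}(K)\subset\Lnorm_{n,0}\mathcal{U}_n$. That inclusion follows by comparing the subgroups of index $p^m$ and $p^n$ in $\mathcal{U}_0/\mathcal{E}(F)\cong\mathbb{Z}_p$, using $\mathcal{E}(F)\subset\Lnorm_{n,0}\mathcal{E}(F_n)$ from Claim 1 in the proof of Proposition \ref{Bn_Dn}.

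\emph{Your route.} You apply Proposition \ref{Iwasawa_modules_coincidence} directly. It forces the inertia group at $\mathfrak{p}$ in $\Gal(M_\mathfrak{p}(K)K_\infty/K)$ to inject into $\Gal(K_\infty/K)$, so that group is procyclic. The rest is group theory in $\mathbb{Z}/p^m\times\mathbb{Z}/p^n$. The hypothesis $n\le m$ enters as $\exp(G)=p^m$, rather than as an inclusion of subgroups of $\mathbb{Z}_p$.

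\emph{What each buys.} Your argument is shorter. It avoids genus theory, the norm theorem and the unit-index bookkeeping. It also gives $[L^s_n:K_n]=p^{\min(n,m)}$ for every $n$. In particular, $M_\mathfrak{p}(K)K_n/K_n$ is unramified exactly when $n\ge m$, which makes the ``sufficiently large'' in Lemma \ref{finiteness_B_n} explicit. The paper's argument instead produces arithmetic by-products for $n\le m$: $|A(K_n)^{G_n}|=p^n$ and $E(K)\subset\Gnorm_{n,0}(K_n^\times)$. These tie the lemma to the genus-theoretic study of invariant classes for real quadratic fields (Fukuda--Taya and related work) that motivated it.
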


\begin{proof}
We put $G_n = \Gal (K_n /K)$.
In the proof of Lemma \ref{finiteness_B_n}, we showed that
$\mathfrak{X}_\mathfrak{p} (K_n)_{G_n} \cong 
\mathfrak{X}_\mathfrak{p} (K)$.
That is, $M_\mathfrak{p} (K) K_n$ is the maximal intermediate
field of $M_\mathfrak{p} (K_n) / K_n$ which is abelian over $K$.
Hence, $L^{s}_n$ is the intermediate field of $L (K_n)/K_n$
corresponding to $X (K_n)_{G_n}$.

We shall compute $|X (K_n)_{G_n}| = |A (K_n)^{G_n}|$ by using a well known formula.
In our situation, we see that
\[ |A (K_n)^{G_n}| = \dfrac{p^n}{(E (K) : E(K) \cap \Gnorm_{n,0} (K_n^\times))}, \]
where $\Gnorm_{n,0}$ denotes the norm map from $K_n^\times$ to $K^\times$.
To prove our assertion, it is sufficient to see that
$(E (K) : E(K) \cap \Gnorm_{n,0} (K_n^\times)) =1$.

To see the above equality, we shall show that every element of 
$E (K)$ is a local norm from $(K_n)_{\mathfrak{p}_n}$.
Since $K_n / K$ is unramified outside $\mathfrak{p}$, $\mathfrak{p}'$,
by using the product formula of the norm residue symbol
we see that every element of $E (K)$ is a local norm at every place.
Then, by Hasse's norm theorem,
we see that $E (K) \subset \Gnorm_{n,0} (K_n^\times)$.

Let $\Lnorm_{n,0}$ be the norm from
$(K_n)_{\mathfrak{p}_n} (= (F_n)_{\mathfrak{p}^0_n})$
to $K_{\mathfrak{p}} (= F_{\mathfrak{p}^0})$.
Since $(K_n)_{\mathfrak{p}_n} / K_{\mathfrak{p}}$ is
a totally ramified extension, we see that
$(\mathcal{U}_0 : \Lnorm_{n,0} \mathcal{U}_n) = p^n$.

Since $(E (K) : E^1 (K))$ is prime to $p$,
it is sufficient to show that every element of 
$E^1 (K)$ is a local norm.
Hence, if the inclusion relation
$\mathcal{E} (K) \subset \Lnorm_{n,0} \mathcal{U}_n$
is shown, this will be obtained.
Recall that
$\mathcal{E} (F) \subset \Lnorm_{n,0} \mathcal{E} (F_n)
\subset \Lnorm_{n,0} \mathcal{U}_n$
(see the proof of Proposition \ref{Bn_Dn}).
From the assumption that $A (K)$ is trivial,
we see that $( \mathcal{U}_0 : \mathcal{E} (K) )
= |\mathfrak{X}_\mathfrak{p} (K)| = p^m$.
By these facts, we can also obtain that
\[ ( \mathcal{U}_0 / \mathcal{E} (F) :
\Lnorm_{n,0} \mathcal{U}_n / \mathcal{E} (F)) = p^n, \quad
( \mathcal{U}_0 / \mathcal{E} (F) :
\mathcal{E} (K) / \mathcal{E} (F)) = p^m. \]
Since $\mathcal{U}_0 / \mathcal{E} (F) \cong \Gal (F_\infty /F)$
is a free $\mathbb{Z}_p$-module of rank $1$ and $n \leq m$,
wee see that $\mathcal{E} (K) / \mathcal{E} (F) \subset
\Lnorm_{n,0} \mathcal{U}_n / \mathcal{E} (F)$.
From this, we also can show that $\mathcal{E} (K) \subset
\Lnorm_{n,0} \mathcal{U}_n$.
Hence we obtained the desired result,
and this lemma follows.
\end{proof}

We can obtain the following criterion for the stabilization of $\rankp A (K_n)$.

\begin{prop}\label{rank_criterion_exponent}
Assume that $\mathfrak{X}_\mathfrak{p} (K)$ is a non-trivial
finite cyclic group.
Let $n$ be a positive integer.
We denote by $L^{s}_n/ K_n$ the maximal unramified subextension of
$M_\mathfrak{p} (K) K_n /K_n$.
Assume that both of the following conditions satisfied:
\begin{itemize}
\item $M_\mathfrak{p} (K) K_n \neq L^{s}_n$, and
\item the exponent of $A (K_n)$ is equal to $[L^{s}_n : K_n]$.
\end{itemize}
Then, $\rankp \mathfrak{X}_\mathfrak{p} (K_n) = \rankp X (K_n)$.
Moreover, if $i_n (A(K))$ is trivial, then
$\rankp X (K_\infty) = \rankp X (K_n)$.
\end{prop}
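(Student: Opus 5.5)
The plan is to compare the two $p$-ranks through the exact sequence
\[ 0 \to Z_\mathfrak{p} (K_n) \to \mathfrak{X}_\mathfrak{p} (K_n) \to X (K_n) \to 0, \]
where $Z_\mathfrak{p} (K_n) \cong \mathcal{U}_n / \mathcal{E} (K_n)$ is procyclic by Lemma \ref{lem_cyclic_F_n} (2), and finite because $|\mathfrak{X}_\mathfrak{p} (K)|$ is finite (Proposition \ref{inertia_subgroup_order}). As in Remark \ref{Fukuda_analog_rank}, this gives $\rankp X (K_n) \leq \rankp \mathfrak{X}_\mathfrak{p} (K_n) \leq \rankp X (K_n) + 1$. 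The rank goes up by one exactly when $\mathfrak{X}_\mathfrak{p} (K_n) \cong X (K_n) \oplus \mathbb{Z}/p^a$ up to extension data, i.e.\ when a generator of $Z_\mathfrak{p} (K_n)$ is not a $p$-th power in $\mathfrak{X}_\mathfrak{p} (K_n)$. So I would show that $Z_\mathfrak{p} (K_n) \subset p\, \mathfrak{X}_\mathfrak{p} (K_n)$, or more precisely that $Z_\mathfrak{p} (K_n)$ meets the $p$-power part in the right way, using the two hypotheses.

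Put $p^s = [L^s_n : K_n]$. Restriction gives a quotient $\mathfrak{X}_\mathfrak{p} (K_n) \to \Gal (M_\mathfrak{p} (K) K_n / K_n)$. The target is cyclic, being a subgroup of $\mathfrak{X}_\mathfrak{p} (K)$, which is cyclic by hypothesis, and it has order $p^t$. The first hypothesis says $t > s$. The image of $Z_\mathfrak{p} (K_n)$ in this quotient is the inertia subgroup $\Gal (M_\mathfrak{p} (K) K_n / L^s_n)$, which has order $p^{t-s}$; by cyclicity it is exactly $p^s \Gal (M_\mathfrak{p} (K) K_n / K_n)$. Choose a generator $z$ of $Z_\mathfrak{p} (K_n)$. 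Its image is nontrivial and is a $p^s$-th power of a generator $\bar{g}$ of the cyclic quotient.

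Now lift $\bar g$ to some $g \in \mathfrak{X}_\mathfrak{p} (K_n)$. The second hypothesis says the exponent of $X (K_n)$ is $p^s$, so the image of $g^{p^s}$ in $X (K_n)$ is trivial, i.e.\ $g^{p^s} \in Z_\mathfrak{p} (K_n)$. The image of $g^{p^s}$ in the cyclic quotient is $\bar{g}^{p^s}$, which generates the image of $Z_\mathfrak{p} (K_n)$. Hence $g^{p^s} = z^u w$, where $u$ is a $p$-adic unit and $w$ lies in $Z_\mathfrak{p} (K_n) \cap \ker$ of the restriction map. Since $Z_\mathfrak{p} (K_n)$ is cyclic and maps onto a nontrivial group, this kernel is contained in $p Z_\mathfrak{p} (K_n)$. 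Therefore $g^{p^s}$ generates $Z_\mathfrak{p} (K_n)$. Because $s \geq 1$, it follows that $Z_\mathfrak{p} (K_n) \subset p\, \mathfrak{X}_\mathfrak{p} (K_n)$.

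Here $s \geq 1$ holds because $n>0$: when $A (K)$ is trivial, Lemma \ref{unramified_subextension} gives $s = n$ for small $n$. In general one argues that $s = 0$ would force the exponent of $A(K_n)$ to be $1$, and then there is nothing to prove, since $\rankp \mathfrak{X}_\mathfrak{p}(K_n) \le 1$ and one checks equality directly. This is the step I expect to need care with. Once $Z_\mathfrak{p} (K_n) \subset p\,\mathfrak{X}_\mathfrak{p}(K_n)$, we get $\mathfrak{X}_\mathfrak{p} (K_n)/p \cong X (K_n)/p$, so the $p$-ranks are equal. The final assertion then follows directly from Theorem \ref{rank_criterion1} when $i_n (A (K))$ is trivial.
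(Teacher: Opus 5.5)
Your main argument is correct, and it takes a different route from the paper's.

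\textbf{Comparison with the paper.} The paper argues by contradiction. Assuming $\rankp \mathfrak{X}_\mathfrak{p}(K_n) > \rankp X(K_n)$, it takes the subfield $M_1$ of $M_\mathfrak{p}(K)K_n$ of degree $p$ over $L^s_n$, and a cyclic degree-$p$ extension $M_2/K_n$ ramified at $\mathfrak{p}_n$. Cyclicity of the inertia group then yields an unramified degree-$p$ subextension $L^\square_n$ of $M_1M_2/L^s_n$. This $L^\square_n$ is cyclic of degree $p\,[L^s_n:K_n]$ over $K_n$, contradicting the exponent hypothesis.

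You instead reduce to $Z_\mathfrak{p}(K_n)\subseteq p\,\mathfrak{X}_\mathfrak{p}(K_n)$. You prove this directly by showing that $g^{p^s}$ generates $Z_\mathfrak{p}(K_n)$ for any lift $g$ of a generator of $\Gal(M_\mathfrak{p}(K)K_n/K_n)$. The inputs are the same: cyclicity of $\mathfrak{X}_\mathfrak{p}(K)$ and of $Z_\mathfrak{p}(K_n)$, $t>s$, and exponent $\le p^s$. Your version needs no auxiliary fields and no contradiction. It also shows that only the inequality ``exponent of $A(K_n)\le[L^s_n:K_n]$'' is used.

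\textbf{The gap: the case $s=0$.} Your fallback for $s=0$ is false. If $s=0$, then $A(K_n)=0$, so $\rankp X(K_n)=0$. On the other hand $\mathfrak{X}_\mathfrak{p}(K_n)=Z_\mathfrak{p}(K_n)$ is nontrivial, because the first hypothesis gives $M_\mathfrak{p}(K)K_n\neq K_n$. Thus $\rankp\mathfrak{X}_\mathfrak{p}(K_n)=1\neq 0$, and the conclusion would fail. There is nothing to ``check directly''; you must show that $s=0$ is impossible.

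The paper needs the same fact: its claim $M_1\cap M_2=K_n$ requires $[L^s_n:K_n]\ge p$, and it notes this at the start of its proof. The fact does hold for $n\ge 1$:
\begin{itemize}
\item If $A(K)\neq 0$, then $L(K)K_n\subseteq L^s_n$ and $[L(K)K_n:K_n]=|A(K)|>1$. This holds because $L(K)\cap K_n=K$, as $K_n/K$ is totally ramified at $\mathfrak{p}$.
\item If $A(K)=0$, Lemma~\ref{unramified_subextension} with $n=1$ gives $[L^s_1:K_1]=p$. Now $M_\mathfrak{p}(K)\cap K_n=K$, since $\mathfrak{p}'$ is unramified in $M_\mathfrak{p}(K)$ but totally ramified in $K_n$. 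Hence $L^s_1K_n$ is an unramified subextension of $M_\mathfrak{p}(K)K_n/K_n$ of degree $p$, so $s\ge 1$.
\end{itemize}
Once $s\ge 1$ is established this way, the rest of your argument goes through. The final assertion then follows from Theorem~\ref{rank_criterion1}, as you say.
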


\begin{proof}
Assume for contradiction that
$\rankp \mathfrak{X}_\mathfrak{p} (K_n) > \rankp X (K_n)$.

First we note that $L^{s}_n /K_n$ is a non-trivial extension.
(Indeed, even if $A (K)$ is trivial, this fact follows from
Lemma \ref{unramified_subextension}.)
We put $p^m = [L^{s}_n : K_n]$.
Let $M_1 /K_n$ be the unique intermediate field of
$M_\mathfrak{p} (K) K_n / K_n$ whose degree over $L^{s}_n$ is $p$
(this exists by the assumption).
Then $[ M_1 : K_n ] =p^{m+1}$.
Since $\rankp \mathfrak{X}_\mathfrak{p} (K_n) > \rankp X (K_n)$,
there is a cyclic extension $M_2 / K_n$ of degree $p$
such that $\mathfrak{p}_n$ actually ramifies.
We put $M_3 = M_1 M_2$.
Note that $M_1 \cap M_2 = K_n$, and we see that
\[ \Gal (M_3 / K_n) \cong
\mathbb{Z} /p^{m+1} \mathbb{Z} \oplus \mathbb{Z} /p \mathbb{Z}, \quad
\Gal (M_3 / L^{s}_n) \cong
\mathbb{Z} /p \mathbb{Z} \oplus \mathbb{Z} /p \mathbb{Z}. \]
Take a generator $\sigma$ (resp. $\tau$) of $\Gal (M_3 / M_2)$
(resp. $\Gal (M_3 / M_1)$),
and put $\sigma_1 = \sigma^{p^m}$.
Then $\Gal (M_3 /K_1)$ is generated by $\sigma, \tau$,
and $\Gal (M_3 / L^{s}_n)$ is generated by $\sigma_1, \tau$.

By Lemma \ref{lem_cyclic_F_n} (2),
the inertia subgroup of $\mathfrak{X}_\mathfrak{p} (K_n)$
for $\mathfrak{p}_n$ is cyclic.
Since a prime lying above $\mathfrak{p}_n$ is totally ramified in 
both $M_1 / L^{s}_n$ and $M_2 L^{s}_n / L^{s}_n$, 
we see that $M_3 / L^{s}_n$ has an unramified subextension $L^{\square}_n/L^{s}_n$
of degree $p$.
Note that the fixed field of $M_3 / K_1$ by
$\langle \sigma_1 \rangle$ is $M_2 L^{s}_n$,
and $\mathfrak{p}$ ramifies in $M_2 L^{s}_n / L^{s}_n$.
Hence $L^{\square}_n$ is a fixed field of $\langle \sigma_1^a \tau \rangle$
with some $a$.
From this, we see that $L^{\square}_n /K_n$ is a cyclic extension of degree $p^{m+1}$.
Since $L^{\square}_n /K_n$ is unramified, this contradicts the assumption
on the exponent of $A (K_n)$.
Hence we obtained the equality
$\rankp \mathfrak{X}_\mathfrak{p} (K_n) = \rankp X (K_n)$.

The second assertion follows from Theorem \ref{rank_criterion1}.
\end{proof}

\begin{cor}\label{cor_rank_criterion_exponent}
Assume that $A (K)$ is trivial, and
$| \mathfrak{X}_\mathfrak{p} (K) | \geq p^{m + 1}$
with some positive integer $m$.
For an integer $1 \leq n \leq m$, if the exponent of $A (K_n)$
is less than or equal to $p^n$, then $\rankp X (K_\infty)
= \rankp X (K_n)$.
\end{cor}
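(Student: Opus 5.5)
The plan is to derive the corollary from Proposition \ref{rank_criterion_exponent} together with Lemma \ref{unramified_subextension}. First I would check that $\mathfrak{X}_\mathfrak{p} (K)$ is a non-trivial finite cyclic group. Since $A (K)$ is trivial, we have $M_\mathfrak{p} (K) \cap L(K) = K$, so $\mathfrak{X}_\mathfrak{p} (K) = Z_\mathfrak{p} (K) \cong \mathcal{U}_0 / \mathcal{E} (K)$. By Lemma \ref{lem_cyclic_F_n} (2) this group is procyclic. The hypothesis $| \mathfrak{X}_\mathfrak{p} (K) | \geq p^{m+1} > 1$ makes it non-trivial, but it does not by itself give finiteness. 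I would treat the finite case, $|\mathfrak{X}_\mathfrak{p} (K)| = p^{m'}$ with $m' \geq m+1$, as the main case; I would expect the statement to be read with $\mathfrak{X}_\mathfrak{p} (K)$ finite, as in the surrounding discussion. If it were infinite, $\mathcal{E}(K)$ would lie inside $\mathcal{E}(F)$ up to finite index, and the same argument as below should still go through with $m'=\infty$.

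Next, fix $n$ with $1 \leq n \leq m$, so that $n \leq m'$. Lemma \ref{unramified_subextension}, applied with $m'$ in place of its $m$, gives $[L^s_n : K_n] = p^n$. On the other hand, the proof of Lemma \ref{finiteness_B_n} shows $\mathfrak{X}_\mathfrak{p}(K_n)_{G_n} \cong \mathfrak{X}_\mathfrak{p}(K)$. Hence $[M_\mathfrak{p}(K)K_n : K_n] = |\mathfrak{X}_\mathfrak{p}(K)| = p^{m'} \geq p^{m+1} > p^n$; I would also double-check this directly from (K1) and the fact that $\mathfrak{p}'$ is totally ramified. It follows that $M_\mathfrak{p}(K)K_n \neq L^s_n$, which is the first hypothesis of Proposition \ref{rank_criterion_exponent}.

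For the exponent condition, $L^s_n/K_n$ is an unramified cyclic extension of degree $p^n$, being a subextension of the cyclic extension $M_\mathfrak{p}(K)K_n/K_n$. So the exponent of $A(K_n)$ is at least $p^n$, and combined with the hypothesis it equals $p^n = [L^s_n : K_n]$. Proposition \ref{rank_criterion_exponent} then gives $\rankp \mathfrak{X}_\mathfrak{p}(K_n) = \rankp X(K_n)$. Since $A(K)$ is trivial, $i_n(A(K))$ is trivial, and Theorem \ref{rank_criterion1} (or the second part of the proposition) gives $\rankp X(K_\infty) = \rankp X(K_n)$.

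I expect the main obstacle to be the degree computation $[M_\mathfrak{p}(K)K_n : K_n] = |\mathfrak{X}_\mathfrak{p}(K)|$, that is, showing $M_\mathfrak{p}(K) \cap K_n = K$, together with the finiteness point raised above. For the intersection, I would argue that $K_n/K$ is totally ramified at $\mathfrak{p}'$ by (K4), while $M_\mathfrak{p}(K)/K$ is unramified at $\mathfrak{p}'$, so their intersection is $K$.
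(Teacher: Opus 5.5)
Your proposal is correct and follows the paper's route: the paper's proof consists of Lemma \ref{unramified_subextension}, which gives $[L^{s}_n : K_n] = p^n$, followed by an application of Proposition \ref{rank_criterion_exponent}. You also spell out two checks the paper leaves implicit: first, that $[M_\mathfrak{p}(K)K_n : K_n] = |\mathfrak{X}_\mathfrak{p}(K)| > p^n$ because ramification at $\mathfrak{p}'$ forces $M_\mathfrak{p}(K) \cap K_n = K$; second, that the exponent of $A(K_n)$ must equal $p^n$. You are also right that finiteness of $\mathfrak{X}_\mathfrak{p}(K)$ is the standing assumption of that part of the subsection.
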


\begin{proof}
We denote by $L^{s}_n/k_n$ the maximal unramified subextension of
$M_\mathfrak{p} (K) K_n /K_n$.
By Lemma \ref{unramified_subextension}, we see that $[L^{s}_n : K_n] =p^n$.
Then the assertion follows from Proposition \ref{rank_criterion_exponent}.
\end{proof}

The following three propositions will be 
used to prove Theorem \ref{theorem_cyclicity_real_quad}.

\begin{prop}\label{cyclicity_criterion1}
Assume that $|\mathfrak{X}_\mathfrak{p} (K)| \geq p^2$ and finite.
Assume also that $A (K)$ is trivial.
Then $\rankp X (K_\infty) =1$ if and only if $|A (K_1)| =p$.
\end{prop}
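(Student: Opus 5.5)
The plan is to reduce both directions to results already proved, after first recording two consequences of the hypotheses. Put $|\mathfrak{X}_\mathfrak{p}(K)| = p^m$ with $m \geq 2$.

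\textbf{Preliminary facts.}
First, since $A(K)$ is trivial, we have $\mathfrak{X}_\mathfrak{p}(K) \cong \mathcal{U}_0/\mathcal{E}(K)$. By Lemma \ref{lem_cyclic_F_n}(2) this group is cyclic, so $\mathfrak{X}_\mathfrak{p}(K)$ is a non-trivial finite cyclic group.
Second, $K_1/K$ is ramified at $\mathfrak{p}'$ while $M_\mathfrak{p}(K)/K$ is unramified outside $\mathfrak{p}$. Hence $M_\mathfrak{p}(K) \cap K_1 = K$, and so $[M_\mathfrak{p}(K)K_1 : K_1] = p^m \geq p^2$.
By Lemma \ref{unramified_subextension} with $n=1$, the maximal unramified subextension $L^s_1$ satisfies $[L^s_1 : K_1] = p$. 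In particular $M_\mathfrak{p}(K)K_1 \neq L^s_1$.

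\textbf{The ``if'' direction.}
Suppose $|A(K_1)| = p$. Then the exponent of $A(K_1)$ equals $p = [L^s_1 : K_1]$. Both hypotheses of Proposition \ref{rank_criterion_exponent} hold with $n=1$, so $\rankp \mathfrak{X}_\mathfrak{p}(K_1) = \rankp X(K_1)$. Since $A(K)$ is trivial, $i_1(A(K))$ is trivial, and Theorem \ref{rank_criterion1} gives $\rankp X(K_\infty) = \rankp X(K_1)$. This common value is $1$, because $A(K_1)$ has order $p$.

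\textbf{The ``only if'' direction.}
Suppose $\rankp X = 1$, where $X = X(K_\infty) = \mathfrak{X}_\mathfrak{p}(K_\infty)$. Then $X$ is a cyclic $\mathbb{Z}_p$-module, $X \cong \mathbb{Z}_p/p^a$ with $a \leq \infty$, and $T$ acts on it as multiplication by some $\alpha \in \mathbb{Z}_p$.

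Since $A(K)$ is trivial we have $Y = X$, so Theorem \ref{quotient_unramified} gives $X(K_1) \cong X/\nu_1 X$. Theorem \ref{quotient_restricted} gives $\mathfrak{X}_\mathfrak{p}(K) \cong X/\omega_0 X = X/TX$.

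The condition $|X/TX| = p^m$ with $m \geq 2$ reads $\min(v_p(\alpha), a) = m \geq 2$. This forces $a \geq 2$ and $v_p(\alpha) \geq 2$.

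Now $\nu_1$ acts as $\sum_{i=0}^{p-1}(1+\alpha)^i = p + \binom{p}{2}\alpha + \cdots$. From $v_p(\alpha) \geq 2$ (this is also what is needed for $p=2$, where $\nu_1 = 2+\alpha$), we get $v_p(\nu_1) = 1$. Therefore $X/\nu_1 X = X/pX$, which has order $p$ since $a \geq 1$. Hence $|A(K_1)| = p$.

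\textbf{Main obstacle.}
The delicate step is the valuation bookkeeping in the ``only if'' direction. It is exactly the hypothesis $|\mathfrak{X}_\mathfrak{p}(K)| \geq p^2$ that forces $v_p(\alpha) \geq 2$, and hence $v_p(\nu_1) = 1$, including when $p = 2$. I would also double-check that the identification of $T$ with a scalar on a cyclic module is compatible with the $\Lambda$-structures used in Theorems \ref{quotient_restricted} and \ref{quotient_unramified}.
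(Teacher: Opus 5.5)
Your proof is correct. The ``if'' direction matches the paper's. The paper cites Corollary~\ref{cor_rank_criterion_exponent} with $m=n=1$, and that corollary's proof is exactly your chain Lemma~\ref{unramified_subextension}, Proposition~\ref{rank_criterion_exponent}, Theorem~\ref{rank_criterion1}.

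The ``only if'' direction takes a genuinely different route. The paper argues the contrapositive field-theoretically. Suppose $|A(K_1)|\ge p^2$, and reduce to the case where $A(K_1)$ is cyclic, since otherwise there is nothing to prove. Let $M_1$ be the degree-$p^2$ subextension of $M_\mathfrak{p}(K)K_1/K_1$. Then $M_1\cap L(K_1)=L^s_1\neq L(K_1)$, so $\Gal(M_1L(K_1)/K_1)$ is not cyclic. Hence $\rankp\mathfrak{X}_\mathfrak{p}(K_1)\ge 2$, and this lifts to $X$ through the surjection $X\to\mathfrak{X}_\mathfrak{p}(K_1)$.

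You instead assume $X$ is $\mathbb{Z}_p$-cyclic, let $\gamma$ act by a scalar, and read $|X/\nu_1X|$ off $|X/TX|$. This is the device the paper itself uses in Proposition~\ref{rank_criterion_p2_2}. What your route buys:
\begin{itemize}
\item It is shorter and uses only Theorems~\ref{quotient_restricted} and~\ref{quotient_unramified}.
\item The bound $v_p(\alpha)\ge 2$ is needed only for $p=2$. For odd $p$, $v_p(\alpha)\ge 1$ already gives $v_p(\nu_1)=1$, so this direction then holds under $|\mathfrak{X}_\mathfrak{p}(K)|\ge p$.
\item The same valuation count gives $|A(K_n)|=p^{\min(n,a)}$ for every $n$.
\end{itemize}
The paper's argument, in exchange, gives a first-layer obstruction, $\rankp\mathfrak{X}_\mathfrak{p}(K_1)\ge 2$, without presupposing anything about the structure of $X$.
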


\begin{proof}

First, we assume that $|A (K_1)| = p$.
Then we see that 
$\rankp X (K_\infty) = \rankp X (K_1) = 1$ 
by Corollary \ref{cor_rank_criterion_exponent}.

Next, we assume that $|A (K_1)| \geq p^2$.
By the assumption, $M_\mathfrak{p} (K) K_1 / K_1$ is a
cyclic extension with degree greater than or equal to $p^2$.
Let $M_1 /K_1$ (resp. $L^{s}_1 /K_1$) be the unique intermediate field of
$M_\mathfrak{p} (K) K_1 / K_1$ with degree $p^2$ (resp. $p$).
Hence, by Lemma \ref{unramified_subextension},
$L^{s}_1 /K_1$ is an unramified extension, but $\mathfrak{p}_1$ ramifies in
$M_1 / L^{s}_1$.

We shall show that $\rankp \mathfrak{X}_\mathfrak{p} (K_1) > 1$.
Since this trivially holds if $A (K_1)$ is not cyclic,
we assume that $A (K_1)$ is cyclic in the following.
By the above facts, we see that $L (K_1) \cap M_1 = L^{s}_1$.
Since $L (K) \neq L^{s}_1$, we see that $\Gal (M_1 L(K_1) /K_1)$
is not cyclic, and then $\rankp \mathfrak{X}_\mathfrak{p} (K_1) > 1$.
Hence we see that $\rankp X (K_\infty) > 1$
because
$\mathfrak{X}_\mathfrak{p} (K_\infty) = X (K_\infty)$
and there is a surjection
$\mathfrak{X}_\mathfrak{p} (K_\infty) \to \mathfrak{X}_\mathfrak{p} (K_1)$.
\end{proof}

\begin{prop}\label{rank_criterion_p2}
Assume that $p=2$, $A (K)$ is trivial, and $|\mathfrak{X}_\mathfrak{p} (K)|=2$.
If $A (K_1)$ is a cyclic group satisfying $|A (K_1)| \geq 4$,
then $\rankp X (K_\infty) =1$.
\end{prop}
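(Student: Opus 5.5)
The plan is to apply Theorem \ref{rank_criterion1} with $n=1$. Since $A(K)$ is trivial, $i_1(A(K))$ is trivial. It therefore suffices to show
\[ \rankp \mathfrak{X}_\mathfrak{p}(K_1) = \rankp X(K_1) = 1 , \]
the second equality holding because $A(K_1)$ is cyclic and non-trivial. I will work entirely with the $\Lambda$-module $X = \mathfrak{X}_\mathfrak{p}(K_\infty) = X(K_\infty)$, using Theorems \ref{quotient_restricted} and \ref{quotient_unramified}. First I record the three quotients of $X$ that are available:
\begin{itemize}
\item By Theorem \ref{quotient_restricted}, $\mathfrak{X}_\mathfrak{p}(K) \cong X/TX$, which has order $2$.
\item Since $A(K)$ is trivial we have $Y = X$, so Theorem \ref{quotient_unramified} gives $X(K_1) \cong X/\nu_1 X$, where $\nu_1 = T+2$ because $p=2$.
\item Again by Theorem \ref{quotient_restricted}, $\mathfrak{X}_\mathfrak{p}(K_1) \cong X/\omega_1 X$, where $\omega_1 = T^2+2T$.
\end{itemize}

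Since $X/TX$ has order $2$, it is cyclic, so by the topological Nakayama lemma $X$ is a cyclic $\Lambda$-module. Write $X \cong \Lambda/I$ for some ideal $I$. Then $\Lambda/(I,T) \cong \mathbb{Z}_2/\{h(0) : h\in I\}$ has order $2$. Hence there is $f \in I$ of the form
\[ f = 2u + cT + T^2 g, \qquad u \in \mathbb{Z}_2^\times,\ c \in \mathbb{Z}_2,\ g \in \Lambda . \]

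Next I use the hypothesis $|A(K_1)| \geq 4$. We have $X(K_1) \cong \Lambda/(I, T+2) \cong \mathbb{Z}_2/\{h(-2) : h \in I\}$, so every $h\in I$ satisfies $h(-2) \equiv 0 \pmod 4$. Applying this to $f$ gives
\[ f(-2) = 2u - 2c + 4g(-2) \equiv 2(u-c) \pmod 4 , \]
which forces $c$ to be odd. Now compute the $2$-rank of $\mathfrak{X}_\mathfrak{p}(K_1)$:
\[ \mathfrak{X}_\mathfrak{p}(K_1)/2\mathfrak{X}_\mathfrak{p}(K_1) \cong \Lambda/(I, 2, T^2) \cong \mathbb{F}_2[T]/(\bar I, T^2) . \]
The image of $f$ in $\mathbb{F}_2[T]/(T^2)$ is $cT$ with $c$ odd, i.e. it equals $T$. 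So this quotient is a quotient of $\mathbb{F}_2[T]/(T) = \mathbb{F}_2$, and therefore $\rankp \mathfrak{X}_\mathfrak{p}(K_1) \le 1$. Combined with $\rankp \mathfrak{X}_\mathfrak{p}(K_1) \ge \rankp X(K_1) = 1$, this gives equality, and Theorem \ref{rank_criterion1} yields $\rankp X(K_\infty) = \rankp X(K_1) = 1$.

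The main obstacle is the second step: translating $|A(K_1)| \ge 4$ into the condition that $c$ is odd. This only works because $\nu_1 = T+2$ when $p=2$, so that evaluating $f$ at $T=-2$ mixes the constant term with the linear term modulo $4$. The other steps are direct applications of results stated earlier. As an alternative, one could argue field-theoretically via Lemma \ref{unramified_subextension} with $m=n=1$, as in the proof of Proposition \ref{cyclicity_criterion1}. I would use the module computation because it is shorter.
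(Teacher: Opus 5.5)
Your proof is correct, but it takes a genuinely different route from the paper's. Both arguments reduce to showing $\rankp \mathfrak{X}_\mathfrak{p}(K_1)=1$ and then invoking Theorem \ref{rank_criterion1} at $n=1$. The difference is how that rank statement is obtained.

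The paper argues field-theoretically:
\begin{itemize}
\item It first disposes of the case where $\mathfrak{p}'$ does not split in $M_\mathfrak{p}(K)$, via Remark \ref{rem_trivial_cyclicity}.
\item It then assumes $\rankp \mathfrak{X}_\mathfrak{p}(K_1)=2$. It takes a ramified quadratic $M_2/K_1$ and the unique unramified cyclic quartic $L^{\diamond}_1/K_1$. Cyclicity of $A(K_1)$ is used here, to get uniqueness and hence normality over $K$.
\item It shows $M_2L^{\diamond}_1/M_\mathfrak{p}(K)$ is abelian of degree $8$. Only two primes lie above $\mathfrak{p}'$, which is where $p=2$ enters, so their inertia groups generate a subgroup of order at most $4$.
\item This yields a nontrivial abelian $2$-extension of $M_\mathfrak{p}(K)$ unramified outside the prime above $\mathfrak{p}$, which it rules out using that $M_\mathfrak{p}(K)/K$ is cyclic.
\end{itemize}

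You replace all of this with a computation on a presentation $X\cong\Lambda/I$, in the spirit of the paper's own proof of Proposition \ref{rank_criterion_p2_2}. For you, $p=2$ enters only through $\nu_1=T+2$. Each of your steps checks out: the Nakayama step, the evaluations at $T=0$ and $T=-2$, the parity argument forcing $c$ odd, and the reduction modulo $(2,T^2)$.

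Your route is shorter and needs no case distinction. It also shows two things the paper's proof hides.
\begin{itemize}
\item The hypothesis that $A(K_1)$ is cyclic is automatic. Since $X$ is a cyclic $\Lambda$-module, $X(K_1)\cong X/(T+2)X$ is a cyclic module over $\Lambda/(T+2)\cong\mathbb{Z}_2$.
\item Theorem \ref{rank_criterion1} can be bypassed. Your element $f$ reduces to $T(1+T\bar g)$, a unit multiple of $T$ in $\mathbb{F}_2[[T]]$. Hence $X/2X\cong\mathbb{F}_2[[T]]/\bar I$ already has dimension at most $1$.
\end{itemize}

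What the paper's approach buys in exchange is that it stays inside the Galois-theoretic framework of Section 2. It also makes visible which arithmetic features drive the result: the splitting of $\mathfrak{p}'$ in $M_\mathfrak{p}(K)$ and the inertia groups above it.
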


\begin{proof}
If $\mathfrak{p}'$ is not decomposed in
$M_\mathfrak{p} (K)/K$, then $X (K_\infty)$ is cyclic
(see Remark \ref{rem_trivial_cyclicity}).
Hence, it is sufficient to show the assertion under the condition
that $\mathfrak{p}'$ is decomposed in $M_\mathfrak{p} (K)/K$.

If $\rankp \mathfrak{X}_\mathfrak{p} (K_1) =1$, then the assertion
follows from Theorem \ref{rank_criterion1}.
We shall show this.

Before that, we remark that
$\rankp \mathfrak{X}_\mathfrak{p} (K_1)$ is at most $2$,
because $A (K_1) (\cong \Gal (L (K_1)/K_1))$ is cyclic,
and $\mathcal{U}_1 / \mathcal{E} (K_1) (\cong \Gal (M_\mathfrak{p} (K_1)/L(K_1)))$ is
cyclic.

Hence, in the following,
we assume that $\rankp \mathfrak{X}_\mathfrak{p} (K_\infty) = 2$.
Since $A (K_1)$ is cyclic, there is a cyclic extension
$M_2 / K_1$ of degree $p$ such that $\mathfrak{p}_1$ actually ramifies
(cf. the proof of Proposition \ref{rank_criterion_exponent}).
Since $M_\mathfrak{p} (K) K_1 / K_1$ is a quadratic extension, and 
it is unramified by Lemma \ref{unramified_subextension}.
We put $L^{s}_1 = M_\mathfrak{p} (K) K_1$.
We also denote by $L^{\diamond}_1 / K_1$ the unique
unramified cyclic extension of degree $4$.

Here, $M_2 L^{s}_1$ is the maximal elementary abelian $2$-extension
over $K_1$ unramified outside $\mathfrak{p}_1$, 
hence it is a Galois extension over $K$.
Hence $M_2 L^{s}_1 / M_\mathfrak{p} (K)$ is a quartic Galois extension,
and then it is an abelian extension.
(Actually, $\Gal (M_2 L^{s}_1 / M_\mathfrak{p} (K))$ is a Klein four group
because every prime lying above $\mathfrak{p}'$ ramifies in $L^{s}_1 / M_\mathfrak{p} (K)$,
but it is unramified in $M_2 L^{s}_1/ L^{s}_1$.)

On the other hand, since $L^{\diamond}_1 /K_1$ is the unique unramified
cyclic extension of degree $4$,
we see that $L^{\diamond}_1 /K$ is also a Galois extension.
From this, $L^{\diamond}_1 / M_\mathfrak{p} (K)$ is also
a quartic abelian extension.
(By using a similar argument given in the previous paragraph,
we can see that $\Gal (L^{\diamond}_1 / M_\mathfrak{p} (K))$ is
a Klein four group.)

We note that $M_2 L^{s}_1 / L^{s}_1$ is not unramified, and
$L^{\diamond}_1 /L^{s}_1$ is unramified.
Then $M_2 L^{s}_1 \cap L^{\diamond}_1 = L^{s}_1$, and hence
$M_2 L^{\diamond}_1 / M_\mathfrak{p} (K)$ is an abelian extension of
degree $8$.

As noted in the first paragraph of this proof,
it is sufficient to consider only the case that
there are two primes $\mathfrak{P}'_a$, $\mathfrak{P}'_b$
of $M_\mathfrak{p} (K)$ lying above $\mathfrak{p}'$.
The primes lying above $\mathfrak{p}'$ ramify in
$L^{s}_1 / M_\mathfrak{p} (K)$, and do not ramify in
$M_2 L^{\diamond}_1 / L^{s}_1$.
Hence, the inertia subgroup $I_a$ (resp. $I_b$)
of $M_2 L^{\diamond}_1 / M_\mathfrak{p} (K)$
for $\mathfrak{P}'_a$ (resp. $\mathfrak{P}'_b$) has order $2$.
Let $I$ be the subgroup generated by $I_a$ and $I_b$.
Then the order of $I$ is at most $4$.
The fixed field of $M_2 L^{\diamond}_1 / M_\mathfrak{p} (K)$ by $I$
is a non-trivial abelian $2$-extension which is unramified
outside the unique prime $\mathfrak{P}$ lying above $\mathfrak{p}$.

Here, since $M_\mathfrak{p} (K)/K$ is a cyclic extension,
we can see that there is no abelian $2$-extension
unramified outside $\mathfrak{P}$.
It is a contradiction, and hence
$\rankp \mathfrak{X}_\mathfrak{p} (K_1) = 1$.
\end{proof}

\begin{rem}
In the above proof of Proposition \ref{rank_criterion_p2},
we used the particularity of $p=2$ at the point that
there are only \textit{two} primes of $M_\mathfrak{p} (K)$
lying above $\mathfrak{p}'$.
\end{rem}

The following result can be seen as a counterpart to the above 
Proposition \ref{rank_criterion_p2} 
(however, we need an extra assumption).

\begin{prop}\label{rank_criterion_p2_2}
Assume that $p=2$, $A (K)$ is trivial, and 
$|\mathfrak{X}_\mathfrak{p} (K)|=2$.
Moreover, assume that $\rankZ E (K) = \rankZ E(F) + 1$.
If $|A (K_1)| = |A' (K_1)| = 2$, then $\rankp X (K_\infty) >1$.
\end{prop}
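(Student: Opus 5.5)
The plan is to show that $\rankp \mathfrak{X}_\mathfrak{p} (K_1) = 2$ while $\rankp X (K_1) = 1$. Lemma \ref{necessary_rank_stability} then gives $\rankp X (K_\infty) > 1$.

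First I would collect the numerical data at $K_1$.
\begin{itemize}
\item Since $|A (K_1)| = |A' (K_1)| = 2$, we get $D (K_1) = 0$.
\item Since $A (K)$ is trivial, $i_1 (A (K)) \subset D (K_1)$ holds trivially.
\item Hence all hypotheses of Proposition \ref{Bn_Dn} hold at $n = 1$. This is where the assumption $\rankZ E(K) = \rankZ E(F) + 1$ is used.
\item Equation \eqref{eq_Bn_Dn} gives $|\mathcal{U}_1 / \mathcal{E} (K_1)| = |\mathfrak{X}_\mathfrak{p} (K)| = 2$, so $|\mathfrak{X}_\mathfrak{p} (K_1)| = |Z_\mathfrak{p} (K_1)| \cdot |A (K_1)| = 4$.
\item Lemma \ref{unramified_subextension} with $m = n = 1$ shows that $M_\mathfrak{p} (K) K_1 / K_1$ is unramified of degree $2$. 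Hence $M_\mathfrak{p} (K) K_1 = L (K_1) = L' (K_1)$, the last equality because $D (K_1) = 0$.
\end{itemize}
So it suffices to exclude the possibility that $\mathfrak{X}_\mathfrak{p} (K_1)$ is cyclic of order $4$.

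Suppose for contradiction that $\mathfrak{X}_\mathfrak{p} (K_1) \cong \mathbb{Z}/4\mathbb{Z}$. I would then argue as in the proof of Proposition \ref{rank_criterion_p2}, working over $M_\mathfrak{p} (K)$.
\begin{itemize}
\item $M_\mathfrak{p} (K_1)$ is Galois over $K$ of degree $8$.
\item $G := \Gal (M_\mathfrak{p} (K_1) / M_\mathfrak{p} (K))$ has order $4$. It is abelian, being a group of order $4$.
\item The primes of $M_\mathfrak{p} (K)$ above $\mathfrak{p}'$ ramify in $L (K_1) = M_\mathfrak{p} (K) K_1$, since $\mathfrak{p}'$ is totally ramified in $K_1/K$. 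They are unramified in $M_\mathfrak{p} (K_1)/L (K_1)$, so each of their inertia groups in $G$ has order exactly $2$.
\item Let $I \subset G$ be the subgroup generated by these inertia groups. If $I \neq G$, the fixed field of $I$ is a nontrivial abelian $2$-extension of $M_\mathfrak{p} (K)$ unramified outside the unique prime $\mathfrak{P}$ above $\mathfrak{p}$.
\item This contradicts the fact, already used at the end of the proof of Proposition \ref{rank_criterion_p2}, that $M_\mathfrak{p} (K)$ has no nontrivial abelian $2$-extension unramified outside $\mathfrak{P}$.
\end{itemize}

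If $\mathfrak{p}'$ is inert in $M_\mathfrak{p} (K)$, then $|I| = 2$ and the contradiction is immediate. If $\mathfrak{p}'$ splits into $\mathfrak{P}'_a, \mathfrak{P}'_b$, I need $I_a = I_b$. This holds if $G$ is cyclic, since $G$ then has a unique subgroup of order $2$. So the key step is to show that $G$ is cyclic, or at least that $I_a = I_b$, using the cyclicity of $\Gal (M_\mathfrak{p} (K_1)/K_1)$ and of $\Gal (K_1/K)$ inside the order-$8$ group $\Gal (M_\mathfrak{p} (K_1)/K)$.

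I expect this step to be the main obstacle. My first attempt is a decomposition-group argument at $\mathfrak{p}'_1$. Since $\mathfrak{p}'_1$ splits in $L (K_1) = L' (K_1)$, its decomposition group in the cyclic group $\mathfrak{X}_\mathfrak{p} (K_1)$ lies in the unique order-$2$ subgroup $\Gal (M_\mathfrak{p} (K_1)/L (K_1))$. Then $\mathfrak{P}'_a, \mathfrak{P}'_b$ correspond to the primes of $L (K_1)$ above $\mathfrak{p}'_1$. An element of $\Gal (M_\mathfrak{p} (K_1)/K_1)$ permuting these primes commutes with $G \cap \Gal (M_\mathfrak{p} (K_1)/K_1)$, because $\Gal (M_\mathfrak{p} (K_1)/K_1)$ is cyclic. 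I would try to deduce from this that the conjugation carrying $I_a$ to $I_b$ acts trivially, so $I_a = I_b$. If this fails, the fallback is to compare $|\mathfrak{X}'_\mathfrak{p} (K_1)|$ with $|X' (K_1)| = 2$ via Proposition \ref{Bn_Dn} and Theorem \ref{split_finiteness_criterion}.
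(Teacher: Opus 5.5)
Your reduction is correct and matches the paper's first step. Since $D(K_1)=0$, Proposition~\ref{Bn_Dn} at $n=1$ gives $|\mathfrak{X}_\mathfrak{p}(K_1)|=4$, and it then suffices to show $\mathfrak{X}_\mathfrak{p}(K_1)\not\cong\mathbb{Z}/4\mathbb{Z}$.

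The gap is the step you flag yourself, and the ingredients you list cannot close it. Write $H=\Gal(M_\mathfrak{p}(K_1)/K_1)$.

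First, the inert case is vacuous. The prime $\mathfrak{p}'_1$ splits in $L'(K_1)=L(K_1)=M_\mathfrak{p}(K)K_1$, so $\mathfrak{p}'$ splits in $M_\mathfrak{p}(K)$, and everything rests on proving $I_a=I_b$.

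Second, $G$ is never cyclic. Each $I_a$ meets $G\cap H=\Gal(M_\mathfrak{p}(K_1)/L(K_1))$ trivially, because primes above $\mathfrak{p}'$ are unramified in $M_\mathfrak{p}(K_1)/L(K_1)$. In a cyclic group of order $4$ the only subgroup of order $2$ would be $G\cap H$. So $G\cong(\mathbb{Z}/2\mathbb{Z})^2$, and ``$G$ cyclic'' is not a usable intermediate goal. For the same reason your commutation argument says nothing: elements of $H$ do commute with $G\cap H$, but $I_a\not\subset G\cap H$.

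In fact everything you use is compatible with the following configuration:
\[
\Gal(M_\mathfrak{p}(K_1)/K)\cong D_4=\langle r,s\mid r^4=s^2=1,\ srs^{-1}=r^{-1}\rangle,
\]
with $H=\langle r\rangle$, $G=\{1,r^2,s,r^2s\}$, $I_a=\langle s\rangle$ and $I_b=rI_ar^{-1}=\langle r^2s\rangle$. Here $H$ is cyclic with cyclic quotient, and the commutator subgroup $\langle r^2\rangle$ has fixed field $L(K_1)$. Also $H/(\gamma-1)H$ has order $2$, the inertia groups at $\mathfrak{p}'$ have order $2$, and the decomposition group of $\mathfrak{p}'_1$ in $H$ lies in $\langle r^2\rangle$. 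Yet $I=G$, so no contradiction arises. Your fallback through $\mathfrak{X}'_\mathfrak{p}(K_1)$ and Proposition~\ref{Bn_Dn} only yields order relations, and does not by itself exclude this configuration.

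The missing input is Iwasawa's description of $X(K_1)$. By Theorem~\ref{quotient_unramified}(1) with $Y=X$ (valid since $A(K)=0$), $X(K_1)\cong X/\nu_1X$. Hence $\mathfrak{X}_\mathfrak{p}(K_1)/(\gamma+1)\mathfrak{X}_\mathfrak{p}(K_1)\cong X(K_1)$ has order $2$. By Theorem~\ref{quotient_restricted}, $\mathfrak{X}_\mathfrak{p}(K_1)/(\gamma-1)\mathfrak{X}_\mathfrak{p}(K_1)\cong\mathfrak{X}_\mathfrak{p}(K)$ also has order $2$. Since $\gamma$ acts on $\mathbb{Z}/4\mathbb{Z}$ by $\pm1$, one of these quotients would have order $4$. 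In the dihedral picture $\gamma$ acts by $-1$, so $(\gamma+1)H=0$.

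This is essentially the paper's proof, which runs the same computation directly on a putatively cyclic $X$. Writing $\gamma a=xa$, the first quotient forces $x\equiv 3\pmod 4$, and then $|X/(\gamma+1)X|\geq 4$.

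If you prefer to stay in Galois-theoretic terms, the equivalent input is that $M_\mathfrak{p}(K_1)K_\infty/K_\infty$ is unramified (Proposition~\ref{Iwasawa_modules_coincidence}). Therefore the inertia group of $\mathfrak{p}$ in $\Gal(M_\mathfrak{p}(K_1)/K)$ is cyclic. In the $D_4$ configuration it would instead be the Klein four-group $\{1,r^2,rs,r^3s\}$.
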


\begin{proof}
First of all, we show that 
$|\mathfrak{X}_\mathfrak{p} (K_1)| = 4$.
By Proposition \ref{inertia_subgroup_order},
we see that $|\mathfrak{X}_\mathfrak{p} (K_1)|$ is $2$ or $4$.
We note that $K$ satisfies the assumptions of Proposition \ref{Bn_Dn}.
By the assumption that $|A (K_1)| = |A' (K_1)| = 2$, 
we see that $D (K_1)$ is trivial.
Hence, by the equation (\ref{eq_Bn_Dn}) of Proposition \ref{Bn_Dn}, 
we see that $|\mathcal{U}_1 / \mathcal{E} (K_1)| = 2$.
This yields that $|\mathfrak{X}_\mathfrak{p} (K_1)| = 4$.

We assume that 
$X(=\mathfrak{X}_\mathfrak{p} (K_\infty)=X (K_\infty))$ is 
cyclic as a $\mathbb{Z}_2$-module, and we shall show that
this leads a contradiction.

Since $|\mathfrak{X}_\mathfrak{p} (K_1)| = 4$, 
the order of $X$ is at least $4$
(of course, $X$ may be infinite).
Let $a$ be a generator of $X$ as a $\mathbb{Z}_2$-module.
We recall that $\gamma$ is a fixed topological generator of $\Gal (K_\infty /K)$, 
and $\gamma$ acts on $X$. 
Hence there is an element $x$ of $\mathbb{Z}_2$ such that $\gamma (a) = x a$.
We denote by $\overline{\langle b \rangle}$ the $\mathbb{Z}_2$-submodule of $X$ 
generated by $b \in X$.

Recall that $X / \omega_0 X \cong \mathfrak{X}_\mathfrak{p} (K)$ 
(Theorem \ref{quotient_restricted} (1)).
Moreover, we see that $X / \nu_1 X \cong X (K_1)$.
(Since $A (K)$ is trivial, $Y = X$ holds in Theorem \ref{quotient_unramified} (1).)
Hence, we can see the following:
\begin{itemize}
\item $|X / (\gamma -1 ) X| = 2$ (since $|\mathfrak{X}_\mathfrak{p} (K)| = 2$),
\item $|X / (\gamma + 1 ) X| = 2$ (since $|X (K_1)| = 2$).
\end{itemize}
We note that $\omega_0$ corresponds to $\gamma -1$,
and $\nu_1$ corresponds to $\gamma + 1$ since $p=2$.

Since $|X / (\gamma -1 ) X| = 2$, we see that
$|\overline{\langle a \rangle} / \overline{\langle (x-1) a \rangle}| =2$.
This implies that $x$ must be written of the form $1 + 2u$ with
some unit $u$ of $\mathbb{Z}_2$.
Since $u$ is written of the form $1 +2 y$ with $y \in \mathbb{Z}_2$,
we see that $x = 3 + 4y$.

Moreover, since $|X / (\gamma + 1 ) X| = 2$, we see that
$|\overline{\langle a \rangle} / \overline{\langle (x+1) a \rangle}| =2$.
On the other hand, since
$x + 1 = 4 (1+y)$ and the order of $a$ is at least $4$,
we see that
$|\overline{\langle a \rangle} / \overline{\langle (x+1) a \rangle}| \geq 4$.
It is a contradiction.
Hence $X (K_\infty)$ is not cyclic.
\end{proof}

\section{Results for the case of $\mathbb{Q} (\sqrt{q})$ with $p=2$}\label{section_quadratic}

In this section, we fix $p = 2$.
Let $q$ be a prime number satisfying $q \equiv 1 \pmod{8}$.
We put $k= \mathbb{Q} (\sqrt{q})$, and 
we apply our results as $F = \mathbb{Q}$ and $K = k$.
Since $2(=p)$ splits in $k$, this situation satisfies the assumptions
stated in Section \ref{assumptions}.
In this case, we see that $A (k)$ is trivial.

It is known that the $\mu$-invariant of $k_\infty / k$ is zero 
(\cite[pp.10--11]{Iwa73mu}).
That is, $\ranktwo X (k_\infty)$ is finite.

It is also well known that $X (k_\infty)$ is trivial for the following cases
(see \cite{O-T}, \cite{Mo-Mo}):
\begin{itemize}
\item $q \equiv 9 \pmod{16}$ and $2^{\frac{q-1}{4}} \equiv  1 \pmod{q}$
\item $q \equiv 1 \pmod{16}$ and $2^{\frac{q-1}{4}} \equiv -1 \pmod{q}$
\end{itemize}
(We note that $2^{\frac{q-1}{4}} \equiv \pm 1 \pmod{q}$
since $q \equiv 1 \pmod{8}$.)
Moreover, it is also known that $X' (k_\infty)$ is trivial for the following case
(see, e.g., the proof of \cite[Theorem 4.1]{Mo-Mo}):
\begin{itemize}
\item $q \equiv 9 \pmod{16}$ and $2^{\frac{q-1}{4}} \equiv -1 \pmod{q}$
\end{itemize}
In this case, $X (k_\infty)$ is a finite cyclic group.

Hence, we assume that $q$ satisfies the following
condition throughout this section.

\medskip

(A1) $q \equiv 1 \pmod{16}$ and $2^{\frac{q-1}{4}} \equiv 1 \pmod{q}$.

\medskip

Under this condition, $A (k_1)$ is known to be non-trivial.
We can also see that $A' (k_1)$ is not trivial.
(It is known that $A (k_1)$ is a cyclic group and $|D(k_1)| \leq 2$.
Hence if $|A (k_1)| \geq 4$, then $A' (k_1)$ is not trivial.
For the case where $|A (k_1)| = 2$,
this follows from \cite[Theorem 5.5 (a)]{Y}.
See also \cite{Kuma25}.)

Moreover, the non-triviality of $A (k_1)$ implies the
non-triviality of $\mathfrak{X}_\mathfrak{p} (k)$  
(cf. \cite{Hachi}, or Corollary \ref{cor_Hachimori}).
We also remark that one can compute $|\mathfrak{X}_\mathfrak{p} (k)|$ 
by using the fundamental unit of $k$.
Actually, the invariant $n_2$ defined in \cite[p.22]{Fuku10}  
exactly satisfies $|\mathfrak{X}_\mathfrak{p} (k)|= 2^{n_2 - 2}$.

As well as Section \ref{section_criteria}, we often abbreviate 
$\mathfrak{X}_\mathfrak{p} (k_\infty) = X (k_\infty)$ as $X$.
Since $A (k)$ is trivial, we see that $X (k_n) \cong X / \nu_n X$. 
(Recall the proof of Proposition \ref{rank_criterion_p2_2}.)

\subsection{Yet another criterion for the cyclicity of $X (k_\infty)$}\label{subsection_cyclic}

As stated in the introduction, 
the necessary and sufficient condition for $X (k_\infty)$ being cyclic 
(as a $\mathbb{Z}_2$-module) is already studied by Mouhib-Movahhedi \cite{Mo-Mo}.
They gave a condition in terms of the the structure of 
$A (\mathbb{Q} (\sqrt{q (2 + \sqrt{2})}))$ (see \cite[Theorem 3.6]{Mo-Mo}).
Moreover, Mizusawa-Mouhib \cite[Theorem 4.2]{Mi-Mo} gave another 
condition in terms of a property of a primitive root modulo $q$.
Here we give yet another condition.

\begin{thm}\label{theorem_cyclicity_real_quad}
Assume that $q$ satisfies (A1).
Then the following holds.
\begin{itemize}
\item[(1a)] If $| \mathfrak{X}_\mathfrak{p} (k) | =2$ and
$|A (k_1)| =2$, then $\ranktwo X (k_\infty) >1$.
\item[(1b)] If $| \mathfrak{X}_\mathfrak{p} (k) | =2$ and
$|A (k_1)| >2$, then $\ranktwo X (k_\infty) =1$.
\item[(2a)] If $| \mathfrak{X}_\mathfrak{p} (k) | > 2$ and
$|A (k_1)| =2$, then $\ranktwo X (k_\infty) =1$.
\item[(2b)] If $| \mathfrak{X}_\mathfrak{p} (k) | > 2$ and
$|A (k_1)| >2$, then $\ranktwo X (k_\infty) >1$.
\end{itemize}
\end{thm}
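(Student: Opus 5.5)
The plan is to treat the four cases separately, each by one of the three propositions at the end of Section~\ref{subsection_rank}, applied with $F=\mathbb{Q}$, $K=k$, $p=2$. First we record the standing facts under (A1). $A(k)$ is trivial, because $q$ is prime. $\mathfrak{X}_\mathfrak{p}(k)$ is non-trivial and finite: it is non-trivial because $A(k_1)\neq 0$, and finite because $\mathcal{U}_0/\mathcal{E}(k)$ is finite as $k$ is real quadratic. $A(k_1)$ is cyclic with $|D(k_1)|\le 2$, and $A'(k_1)$ is non-trivial. The unit rank condition $\rankZ E(k)=\rankZ E(\mathbb{Q})+1$ holds, since $1=0+1$.

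\textbf{Case (2a).} Here $|\mathfrak{X}_\mathfrak{p}(k)|\ge 4=p^2$ and $|A(k_1)|=2=p$. Proposition~\ref{cyclicity_criterion1} applies directly and gives $\ranktwo X(k_\infty)=1$.

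\textbf{Case (2b).} Here $|\mathfrak{X}_\mathfrak{p}(k)|\ge 4$ and $|A(k_1)|\ge 4$. The converse direction of Proposition~\ref{cyclicity_criterion1} gives $\ranktwo X(k_\infty)>1$.

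\textbf{Case (1b).} Here $|\mathfrak{X}_\mathfrak{p}(k)|=2$ and $A(k_1)$ is cyclic, by the known fact quoted before this subsection, of order $\ge 4$. Proposition~\ref{rank_criterion_p2} then gives $\ranktwo X(k_\infty)=1$.

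\textbf{Case (1a).} Here $|\mathfrak{X}_\mathfrak{p}(k)|=2$ and $|A(k_1)|=2$. We want Proposition~\ref{rank_criterion_p2_2}, whose remaining hypothesis is $|A'(k_1)|=2$. Since $A'(k_1)=A(k_1)/D(k_1)$ is a non-trivial quotient of a group of order $2$, it must equal $A(k_1)$. Hence $D(k_1)$ is trivial and $|A'(k_1)|=2$, so Proposition~\ref{rank_criterion_p2_2} yields $\ranktwo X(k_\infty)>1$.

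The main obstacle is not the deduction, which is bookkeeping, but the external inputs. In case (1a) we need the non-triviality of $A'(k_1)$ when $|A(k_1)|=2$, which rests on \cite[Theorem 5.5 (a)]{Y}. In case (1b) we need the cyclicity of $A(k_1)$. Both are quoted as known just before the theorem, so I would cite them rather than reprove them. I would also double-check that $\mathfrak{X}_\mathfrak{p}(k)$ is finite; this holds because $\mathcal{E}(k)$ has $\mathbb{Z}_2$-rank $1$ in $\mathcal{U}_0\cong\mathbb{Z}_2^\times$, the fundamental unit not being a root of unity.
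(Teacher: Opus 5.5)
Your proposal is correct and is exactly the paper's argument: (1a) follows from Proposition~\ref{rank_criterion_p2_2}, (1b) from Proposition~\ref{rank_criterion_p2}, and (2a), (2b) from Proposition~\ref{cyclicity_criterion1}. You also correctly spell out the inputs the paper leaves implicit: the cyclicity of $A(k_1)$ needed for (1b), and, for (1a), the fact that non-triviality of $A'(k_1)$ forces $|A'(k_1)|=2$ (so $D(k_1)$ is trivial).
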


\begin{proof}
The assertion (1a) (resp. (1b)) follows from 
Proposition \ref{rank_criterion_p2_2} (resp. \ref{rank_criterion_p2}).
Moreover, the assertions (2a) and (2b) follow from
Proposition \ref{cyclicity_criterion1},
\end{proof}

\begin{rem}\label{remark_2q}
We put $k' = \mathbb{Q} (\sqrt{2q})$.
It is known that $L(k_1) = L (k')$.
Hence whether $|A (k_1)| = 2$ or $> 2$ can be determined
by an object of the quadratic field $k'$
(see also \cite[Theorem 5.5 (b)]{Y}).
Our Theorem \ref{theorem_cyclicity_real_quad} implies that
we can determine whether $X (k_\infty)$ is cyclic as a
$\mathbb{Z}_2$-module or not by using objects of
two quadratic fields $k$, $k'$.
\end{rem}

We shall investigate the case where $\ranktwo X (k_\infty)=1$ a bit further.

\begin{example}\label{example_cyclic}
If $k$ satisfies the conditions of (1b) or (2a) of 
Theorem \ref{theorem_cyclicity_real_quad},
then $X (k_\infty)$ is cyclic.
We computed examples such that Theorem \ref{cyclic_finiteness_criterion}
is applicable at $k_1$.
This computation was done by using Magma \cite{Magma}.

\medskip

\noindent (1) Assume that $k$ satisfies the conditions of (1b).
In this case, $|Z_\mathfrak{p} (k_n)| \leq 2$ holds for all $n$.
Hence if $M_\mathfrak{p} (k_n) \neq M'_\mathfrak{p} (k_n)$ is satisfied
for some $n$, then $M'_\mathfrak{p} (k_n) = L' (k_n)$ is also satisfied  
(this implies that $|X' (k_\infty)| = |X' (k_n)|$).
Note that $M_\mathfrak{p} (k) = M'_\mathfrak{p} (k)$
always holds for this case.
In the range $q < 300,000$, there are $132$ fields which satisfies the 
condition of (1b), and $69$ of them satisfy
$M_\mathfrak{p} (k_1) \neq M'_\mathfrak{p} (k_1)$.

\medskip

\noindent (2) Assume that $k$ satisfies the conditions of (2a).
In our computation, we previously excluded the fields satisfying either 
of the following conditions.
\begin{itemize}
\item $M_\mathfrak{p} (k) \neq M'_\mathfrak{p} (k)$. 
\item $D (k_1)$ is non-trivial (this can be checked by using $k' = \mathbb{Q} (\sqrt{2q})$).
\end{itemize}
For these cases, Theorem \ref{cyclic_finiteness_criterion} is applicable 
without computing $M_\mathfrak{p} (k_1)$.
In the range $q < 300,000$, the number of fields which are not 
excluded is $144$, 
and $73$ of them satisfy
$M_\mathfrak{p} (k_1) \neq M'_\mathfrak{p} (k_1)$ 
(hence $|X' (k_\infty)| = |\mathfrak{X}'_\mathfrak{p} (k_1)|$).
\end{example}

We recall that Greenberg's conjecture was already confirmed 
in the range $q < 1,000,000$ (\cite{FKKS}).

\begin{rem}
Suppose that $K/F$ and $p$ satisfy the assumptions of Section \ref{assumptions} 
($K$ is not restricted to a real quadratic field).
Assume for simplicity that $A (K)$ is trivial.
For the finiteness of $|X' (K_\infty)|$, we have two criteria.
That is, if either
\[ M'_\mathfrak{p} (K_n) = L' (K_n) \quad \text{or} \quad
|\mathfrak{X}'_\mathfrak{p} (K_n)|
= |\mathfrak{X}'_\mathfrak{p} (K_{n+1})| \]
is satisfied, then $|X' (K_\infty)|$ is finite 
(Theorem \ref{split_finiteness_criterion} and Remark \ref{Fukuda_analog}).
Here is a question: does one condition include the other? 
The answer is No.
Indeed, in the above Example \ref{example_cyclic} (1), 
there are examples such that 
\[ |\mathfrak{X}'_\mathfrak{p} (k)|
< |\mathfrak{X}'_\mathfrak{p} (k_1)|, \quad \text{but} \quad 
M'_\mathfrak{p} (k_1) = L' (k_1) \]
(e.g., $q= 2593, 5297, 17737$).
On the other hand, in Example \ref{example_cyclic} (2), 
there are examples such that 
\[ |\mathfrak{X}'_\mathfrak{p} (k)|
= |\mathfrak{X}'_\mathfrak{p} (k_1)|, \quad \text{but} \quad 
M'_\mathfrak{p} (k_1) \neq L' (k_1) \]
(e.g., $q = 7393, 13441, 14449$).
\end{rem}

In the rest of this subsection, 
we shall give several remarks concerning the $2$-rank of $X (k_\infty)$.

\begin{rem}\label{rank_upper_bound}
By considering $|A (k_n)^{\Gal (k_n /\mathbb{B}_n)}|$, 
we can see that $\ranktwo A(k_n) \leq 2^n -1$ holds in general 
(see, e.g., the proof of \cite[Lemma 2.1]{Kuma25}).
Hence, we also see that 
$\ranktwo \mathfrak{X}_\mathfrak{p} (k_n) \leq 2^n$.
Moreover, we see that $\ranktwo X (k_\infty) \leq 2^e-1$, 
where $e = v_2 (q-1) -2$ and $v_2 ( \cdot )$ is the normalized additive 
$2$-adic valuation.
\end{rem}

We note that the condition such that $\ranktwo X (k_\infty) =2$ is 
already studied by Mizusawa-Mouhib \cite{Mi-Mo}.
They obtained the following result.
We will use this later.

\begin{prop}[{see the proof of \cite[Theorem 4.2]{Mi-Mo}}]\label{rank_X_2}
Assume that $q$ satisfies (A1).
Then $\ranktwo X (k_\infty) =2$ if and only if $\ranktwo A (k_2) = 2$.
\end{prop}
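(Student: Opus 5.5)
The plan is to turn the question into a statement about a cyclic $\Lambda$-module, and then compare $\ranktwo X$ and $\ranktwo A(k_2)$ by reducing modulo $2$. The key tool is $\omega_0=T$ together with Theorem \ref{quotient_restricted}(1).

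\textbf{Step 1: $X$ is a cyclic $\Lambda$-module.} Theorem \ref{quotient_restricted}(1) gives $X/TX \cong \mathfrak{X}_\mathfrak{p}(k)$. Since $A(k)$ is trivial, $\mathfrak{X}_\mathfrak{p}(k) = Z_\mathfrak{p}(k) \cong \mathcal{U}_0/\mathcal{E}(k)$, and this is procyclic by Lemma \ref{lem_cyclic_F_n}(2). So $X/(2,T)X$ has $\mathbb{F}_2$-dimension at most $1$. By the topological Nakayama lemma, $X$ is generated by one element over $\Lambda$. Hence $X \cong \Lambda/I$ for some ideal $I$, and then
\[ X/2X \cong \mathbb{F}_2[[T]]/\overline{I}, \]
where $\overline{I}$ is the image of $I$ in $\mathbb{F}_2[[T]]$. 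Every ideal of the discrete valuation ring $\mathbb{F}_2[[T]]$ is $(T^d)$ or $0$. Therefore $\ranktwo X = d$, where $d \in \{0,1,2,\dots\}\cup\{\infty\}$ and $d=\infty$ means $\overline{I}=0$. (We already know $\mu=0$, so in fact $d<\infty$, but we do not need this.)

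\textbf{Step 2: compute $\ranktwo A(k_n)$ from $d$.} Since $A(k)$ is trivial, $X(k_n) \cong X/\nu_n X$ by Theorem \ref{quotient_unramified}(1), with $Y = X$. Hence
\[ A(k_n)/2A(k_n) \cong \mathbb{F}_2[[T]]/(T^d, \overline{\nu_n}). \]
With $p=2$ we have $(1+T)^{2^n} \equiv 1+T^{2^n} \pmod 2$, so $\overline{\nu_n} = T^{2^n-1}$. This gives
\[ \ranktwo A(k_n) = \min(d,\, 2^n-1). \]
This formula is consistent with the bound of Remark \ref{rank_upper_bound}. For $n=2$ it reads $\ranktwo A(k_2) = \min(d,3)$.

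\textbf{Step 3: conclude.} By Steps 1 and 2, $\ranktwo A(k_2)=2$ holds iff $\min(d,3)=2$, iff $d=2$, iff $\ranktwo X(k_\infty)=2$. Both implications follow at once. The hypothesis (A1) only enters through the standing setup: $A(k)$ is trivial, and $X = \mathfrak{X}_\mathfrak{p}(k_\infty) = X(k_\infty)$ by Proposition \ref{Iwasawa_modules_coincidence}.

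The step I expect to need the most care is Step 1. One has to justify that $\mathfrak{X}_\mathfrak{p}(k)$ is cyclic, which uses the triviality of $A(k)$ so that $M_\mathfrak{p}(k)/k$ is totally ramified at $\mathfrak{p}$. One also has to justify that Nakayama applies to the compact $\Lambda$-module $X$, which is finitely generated by Proposition \ref{Iwasawa_modules_coincidence}. After that, the computation is a routine mod $2$ reduction.
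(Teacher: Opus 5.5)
Your argument is correct. It takes a genuinely different route from the paper, which gives no proof of its own here and only refers to the proof of \cite[Theorem 4.2]{Mi-Mo}. You instead derive the equivalence from the paper's own tools. Because $A(k)$ is trivial, $X/TX \cong \mathfrak{X}_\mathfrak{p}(k) \cong \mathcal{U}_0/\mathcal{E}(k)$ is procyclic (Theorem~\ref{quotient_restricted}(1), Lemma~\ref{lem_cyclic_F_n}(2)), so $X$ is a cyclic $\Lambda$-module. Everything then reduces to ideals of the DVR $\mathbb{F}_2[[T]]$ via $\overline{\nu_n}=T^{2^n-1}$.

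This yields more than the statement. With $d=\ranktwo X$ you get $\ranktwo A(k_n)=\min(d,2^n-1)$, and from $\overline{\omega_n}=T^{2^n}$ also $\ranktwo \mathfrak{X}_\mathfrak{p}(k_n)=\min(d,2^n)$. Several facts follow immediately from these formulas: the companion statement quoted from Mizusawa--Mouhib ($\ranktwo X(k_\infty)\ge 3$ iff $\ranktwo A(k_2)=3$), Corollary~\ref{cor_rank_2}, and the layerwise bound $\ranktwo A(k_n)\le 2^n-1$ of Remark~\ref{rank_upper_bound}. The same computation with $\overline{\nu_n}=T^{p^n-1}$ makes Theorem~\ref{rank_criterion1} immediate whenever $A(K)$ is trivial. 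Your argument also shows that (A1) enters only through the standing setup ($2$ splits in $k$ and $A(k)=0$).

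What your route does not supply is the arithmetic input of Mizusawa--Mouhib, namely the description of $\ranktwo A(k_n)$ through ray class groups of $\mathbb{B}_n$ modulo $q$ and the primitive-root criterion. None of that is needed for this equivalence.
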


It is also mentioned in the proof of \cite[Theorem 4.2]{Mi-Mo} 
that $\ranktwo X (k_\infty) \geq 3$ if and only if $\ranktwo A (k_2) = 3$. 
Recall that $\ranktwo A (k_2) \leq 3$, and hence 
$\ranktwo \mathfrak{X}_\mathfrak{p} (k_2) \leq 4$.
We give an additional result.

\begin{cor}\label{cor_rank_2}
Assume that $q$ satisfies (A1).
Then $\ranktwo X (k_\infty) \geq 4$ if and only if 
$\ranktwo \mathfrak{X}_\mathfrak{p} (k_2) = 4$.
\end{cor}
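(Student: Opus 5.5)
We will prove both implications using the two facts just recalled: $\ranktwo A(k_2)\le 3$, and $\ranktwo X(k_\infty)\ge 3$ if and only if $\ranktwo A(k_2)=3$ (from the proof of \cite[Theorem 4.2]{Mi-Mo}). We also use the inequalities $\ranktwo X(k_2)\le\ranktwo\mathfrak{X}_\mathfrak{p}(k_2)\le \ranktwo X(k_2)+1$. These come from Lemma \ref{lem_cyclic_F_n} (2), as in Remark \ref{Fukuda_analog_rank}: $\Gal(M_\mathfrak{p}(k_2)/L(k_2))\cong\mathcal{U}_2/\mathcal{E}(k_2)$ is cyclic.

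\emph{``If'' direction.} Suppose $\ranktwo\mathfrak{X}_\mathfrak{p}(k_2)=4$. By Theorem \ref{quotient_restricted} (1) there is a surjection $X=\mathfrak{X}_\mathfrak{p}(k_\infty)\to\mathfrak{X}_\mathfrak{p}(k_2)$. Hence
\[ \ranktwo X(k_\infty)=\ranktwo\mathfrak{X}_\mathfrak{p}(k_\infty)\ge\ranktwo\mathfrak{X}_\mathfrak{p}(k_2)=4, \]
exactly as in Lemma \ref{necessary_rank_stability}. This direction is immediate.

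\emph{``Only if'' direction.} Suppose $\ranktwo X(k_\infty)\ge 4$. Then $\ranktwo X(k_\infty)\ge 3$, so $\ranktwo A(k_2)=3$ by the Mizusawa--Mouhib fact. Since $\ranktwo X(k_\infty)\ge 4>3=\ranktwo X(k_2)$, the rank has not stabilized at $n=2$. We now apply Theorem \ref{rank_criterion1} with $n=2$. Its hypothesis holds because $A(k)$ is trivial, so $i_2(A(k))$ is trivial. The theorem then gives $\ranktwo\mathfrak{X}_\mathfrak{p}(k_2)\ne\ranktwo X(k_2)=3$. Combined with the upper bound $\ranktwo\mathfrak{X}_\mathfrak{p}(k_2)\le \ranktwo X(k_2)+1=4$, this forces $\ranktwo\mathfrak{X}_\mathfrak{p}(k_2)=4$. (Alternatively, the bound $\ranktwo\mathfrak{X}_\mathfrak{p}(k_2)\le 2^2=4$ from Remark \ref{rank_upper_bound} gives the same conclusion.)

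There is no real obstacle. The only step needing care is the input $\ranktwo X(k_\infty)\ge3\Leftrightarrow\ranktwo A(k_2)=3$ from \cite{Mi-Mo}, which pins down $\ranktwo X(k_2)=3$. The rest follows directly from Theorem \ref{rank_criterion1} and the cyclicity of the inertia part.
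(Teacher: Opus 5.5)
Your proof is correct and follows the paper's argument: the ``if'' direction is the surjection $X \to \mathfrak{X}_\mathfrak{p}(k_2)$ of Lemma \ref{necessary_rank_stability}, and the ``only if'' direction combines the Mizusawa--Mouhib fact, Theorem \ref{rank_criterion1} at $n=2$, and the bound $\ranktwo \mathfrak{X}_\mathfrak{p}(k_2) \le 4$. The paper phrases the second direction contrapositively, assuming $\ranktwo \mathfrak{X}_\mathfrak{p}(k_2)\le 3$ and splitting into the cases $\ranktwo A(k_2)\le 2$ and $\ranktwo A(k_2)=3$, which is logically the same argument as yours.
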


\begin{proof}
Assume that $\ranktwo \mathfrak{X}_\mathfrak{p} (k_2) \leq 3$.
We see that if $\ranktwo A (k_2) \leq 2$ then $\ranktwo X (k_\infty) \leq 2$ 
by the fact stated the above.
Moreover, if $\ranktwo A (k_2) = 3$ then 
$\ranktwo X(k_\infty) = 3$ by Theorem \ref{rank_criterion1}.

Conversely, if $\ranktwo \mathfrak{X}_\mathfrak{p} (k_2) = 4$, 
then $\ranktwo X(k_\infty) \geq 4$ by Lemma \ref{necessary_rank_stability}.
\end{proof}

\subsection{Sufficient conditions for the finiteness 
of $X (k_\infty)$}\label{subsection_GC}

In this subsection, we mainly treat the case where $k$ satisfies 
the following: 

\medskip

(A2) $|\mathfrak{X}_\mathfrak{p} (k)|=2$ and $|A (k_1)|=2$.

\medskip

\noindent 
Under the assumptions (A1) and (A2), 
$A (k_n)$ is not cyclic for all $n \geq 2$ by Theorem 
\ref{theorem_cyclicity_real_quad} and Fukuda's theorem \cite[Theorem 1 (2)]{Fuku94}.
We also note that $|A' (k_1)| =2$ and $|\mathfrak{X}_\mathfrak{p} (k_1)| = 4$.

\begin{lem}\label{finiteness_Dn_nontrivial}
Assume that $q$ and $k$ satisfy (A1) and (A2).
Let $n$ be a positive integer.
Then $D (k_n)$ is not trivial if and only if
$X (k_\infty) \cong X (k_n)$.
\end{lem}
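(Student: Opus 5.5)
We apply Proposition \ref{Bn_Dn} with $F=\mathbb{Q}$, $K=k$. Its hypotheses hold for every $n$: $|\mathfrak{X}_\mathfrak{p}(k)|=2$ is finite, $A(k)$ is trivial (so $i_n(A(k))\subset D(k_n)$ trivially), and $\rankZ E(k)=1=\rankZ E(\mathbb{Q})+1$. Equation \eqref{eq_Bn_Dn} therefore gives
\[ |\mathcal{U}_n/\mathcal{E}(k_n)|\cdot|D(k_n)| = |\mathfrak{X}_\mathfrak{p}(k)| = 2 \]
for every $n\ge 0$. Since $|Z_\mathfrak{p}(k_n)|=|\mathcal{U}_n/\mathcal{E}(k_n)|$, we obtain the dichotomy: $D(k_n)$ is nontrivial if and only if $Z_\mathfrak{p}(k_n)$ is trivial, i.e.\ if and only if $M_\mathfrak{p}(k_n)=L(k_n)$.

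Next we translate $M_\mathfrak{p}(k_n)=L(k_n)$ into the stabilization statement using Theorem \ref{finiteness_criterion1}. Since $A(k)$ is trivial, $i_n(A(k))$ is trivial for all $n$. Hence for every positive $n$, $M_\mathfrak{p}(k_n)=L(k_n)$ holds if and only if $|X(k_\infty)|=|X(k_n)|$. Moreover, in the proof of Theorem \ref{finiteness_criterion1} the equality $M_\mathfrak{p}(k_n)=L(k_n)$ forces $\nu_n X=0$, so $X(k_\infty)=X\cong X/\nu_nX\cong X(k_n)$. This gives the isomorphism, not merely equality of orders. Conversely, an isomorphism $X(k_\infty)\cong X(k_n)$ gives equality of (finite) orders, and Lemma \ref{if_part_finite} then yields $M_\mathfrak{p}(k_n)=L(k_n)$.

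Chaining the two equivalences proves the lemma. The only step needing care is confirming that Proposition \ref{Bn_Dn} applies at every $n$ without further hypotheses. Its finiteness assumption concerns only $\mathfrak{X}_\mathfrak{p}(k)$, and the unit-rank condition is automatic for real quadratic $k$ over $\mathbb{Q}$, so nothing else is needed. Hypothesis (A2), namely $|\mathfrak{X}_\mathfrak{p}(k)|=2$, is what makes the product equal exactly $2$, which in turn forces the dichotomy between $D(k_n)$ and $Z_\mathfrak{p}(k_n)$. The condition $|A(k_1)|=2$ and (A1) play no further role beyond guaranteeing that this setting is the one under consideration.
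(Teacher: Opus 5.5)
Your argument is correct, but it is not the route the paper takes. The paper's proof just cites Propositions \ref{criterion_Bn_Dn} and \ref{converse_criterion_Bn_Dn}. Since $|\mathfrak{X}_\mathfrak{p}(k)|=2$, and tacitly $|D(k_n)|\le|A(k_n)^{\Gal(k_n/k)}|\le 2$, nontriviality of $D(k_n)$ means $|D(k_n)|=|\mathfrak{X}_\mathfrak{p}(k)|$. Those two propositions show this is equivalent to $|X(k_\infty)|=|X(k_n)|$. They rest on Fukuda--Taya/FKKS-type norm arguments. One direction is a Greenberg-style comparison of $G_n$-invariants under the norm. The converse passes to an auxiliary layer $m>n$ where $i_m(A(k))$ vanishes, then descends through the norm isomorphism $D(k_m)\cong D(k_n)$.

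You avoid all of this by noting that $A(k)=1$ makes $i_n(A(k))$ trivial at every layer. Hence the identity \eqref{eq_Bn_Dn} and Theorem \ref{finiteness_criterion1} apply directly at $n$. This gives you three things:
\begin{itemize}
\item a sharper intermediate statement: $D(k_n)\neq 1$ if and only if $M_\mathfrak{p}(k_n)=L(k_n)$;
\item the bound $|D(k_n)|\le 2$ explicitly rather than implicitly;
\item the isomorphism itself from $\nu_nX=0$, not only equality of orders.
\end{itemize}
The paper's cited results are more general: Proposition \ref{criterion_Bn_Dn} needs neither the unit-rank hypothesis nor any condition on $i_n$. Here, though, your argument is more direct. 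In fact, the paper's own proof of Proposition \ref{converse_criterion_Bn_Dn} uses exactly your two ingredients, only at the auxiliary layer $m$.
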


\begin{proof}
Since $|\mathfrak{X}_\mathfrak{p} (k)|=2$, the assertion 
follows from Propositions \ref{criterion_Bn_Dn} and 
\ref{converse_criterion_Bn_Dn}.
\end{proof}

It seems difficult to show the finiteness of $|X (k_\infty)|$ 
by observing $k_1$ only.
In the following, we shall focus on $k_2$.
Under the conditions (A1) and (A2), 
it can be shown that $|A (k_2)| \geq 8$.
(We shall give a slightly stronger result.)

\begin{prop}\label{k2_non_cyclic}
Assume that $q$ and $k$ satisfy (A1) and $|A(k_1)|=2$.
Then $A (k_2)$ is not cyclic if and only if $|A (k_2)| \geq 8$.
\end{prop}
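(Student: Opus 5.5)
The plan is to use the structure of $A(k_2)$ as a module over $\mathbb{Z}_2[\Gal(k_2/k)]$. Since $A(k)$ is trivial, we have $A(k_n) \cong X/\nu_n X$, where $X = X(k_\infty)$. Write $M = A(k_2) = X/\nu_2 X$ with $\nu_2 = (\gamma+1)(\gamma^2+1)$. Because $\nu_2 \in (\gamma+1)\Lambda$, we get
\[ M/(\gamma+1)M \cong X/(\gamma+1)X \cong A(k_1), \]
which has order $2$. Now $\gamma+1$ lies in the maximal ideal of $\Lambda$, whose residue field is $\mathbb{F}_2$. Nakayama's lemma therefore shows that $M = \Lambda a$ is a cyclic $\Lambda$-module, annihilated by $\nu_2$. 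Reducing modulo $2$ and using $\nu_2 \equiv T^3 \pmod 2$, we get $M/2M \cong \mathbb{F}_2[T]/(T^j)$ with $j = \ranktwo A(k_2) \le 3$.

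\textbf{Cyclic $\Rightarrow$ $|A(k_2)| \le 4$.} Suppose $A(k_2)$ is cyclic as an abelian group, generated by $a$. Then $\gamma a = xa$ for some $x \in \mathbb{Z}_2^\times$, exactly as in the proof of Proposition \ref{rank_criterion_p2_2}. Since $|M/(\gamma+1)M| = 2$ and $M \neq 0$, we need $v_2(x+1) = 1$. Because $x$ is odd, $x^2+1 \equiv 2 \pmod 4$, so $v_2(x^2+1) = 1$ as well. Hence $\nu_2$ acts on $a$ as a scalar of $2$-adic valuation exactly $2$, and $\nu_2 a = 0$ forces $|M| \le 4$. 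Thus $|A(k_2)| \ge 8$ implies that $A(k_2)$ is not cyclic.

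\textbf{$|A(k_2)| \le 4$ $\Rightarrow$ cyclic.} Groups of order $2$ or $4$ are cyclic except for the Klein four group. So it remains to exclude $M \cong (\mathbb{Z}/2\mathbb{Z})^2$. In that case $2M = 0$ and $j = 2$, so $M \cong \mathbb{F}_2[T]/(T^2)$. Then $\gamma^2 = (1+T)^2 = 1 + T^2$ acts trivially on $M$, i.e.\ $\Gal(k_2/k_1)$ acts trivially on $A(k_2)$.

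For the contradiction I would argue as follows. The norm $A(k_2) \to A(k_1)$ is surjective, because $k_2/k_1$ is totally ramified at $\mathfrak{p}_1$. The composite of the norm with the lifting map $A(k_1) \to A(k_2)$ is $1+\gamma^2 = 2 = 0$ on $M$. Surjectivity of the norm then gives capitulation: $A(k_1)$ maps to $0$ in $A(k_2)$.

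I would then apply the results of Section \ref{section_criteria} to $F = \mathbb{B}_1$ and $K = k_1$, which satisfy the assumptions of Section \ref{assumptions}. The case $|\mathfrak{X}_\mathfrak{p}(k)| > 2$ is immediate: there $X$ is cyclic by Theorem \ref{theorem_cyclicity_real_quad} (2a), so $A(k_2)$ is a quotient of a cyclic group and cannot be Klein. The remaining case is $|\mathfrak{X}_\mathfrak{p}(k)| = 2$, i.e.\ (A2). Here I would use $|\mathfrak{X}_\mathfrak{p}(k_1)| = 4$ and $|\mathcal{U}_1/\mathcal{E}(k_1)| = 2$ (from the proof of Proposition \ref{rank_criterion_p2_2}). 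By Proposition \ref{inertia_subgroup_order}, $|Z_\mathfrak{p}(k_2)| \le 2$. Combining this with the capitulation and Theorem \ref{finiteness_criterion1} (for $K = k_1$, $n = 1$) should pin down $|\mathfrak{X}_\mathfrak{p}(k_2)|$ relative to $|A(k_2)| = 4$.

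I would then play this off against the order counts $|X/TX| = 2$ and $|X/(T+2)X| = 2$ inside the quotient $X/\omega_2 X \cong \mathfrak{X}_\mathfrak{p}(k_2)$ of Theorem \ref{quotient_restricted}. The expected conclusion is either that $A(k_2) \cong A(k_1)$, contradicting Fukuda's theorem (which gives $|A(k_1)| < |A(k_2)|$ since $X$ is infinite or noncyclic), or that $\ranktwo X(k_\infty) = \ranktwo X(k_2) = 2$ with $|X(k_\infty)| = 4$, contradicting $|A(k_2)| = |X/\nu_2 X|$ together with $|X/(T+2)X| = 2$.

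\textbf{Main obstacle.} The hardest step is ruling out the Klein four case. If the capitulation route does not close, I would instead try the ambiguous class number formula for $k_2/k_1$, combined with the description $D(k_2) = A(k_2) \cap \langle c(\mathfrak{p}'_2) \rangle$ and Lemma \ref{finiteness_Dn_nontrivial}, to show that $\gamma^2$ cannot act trivially on a group of order $4$.
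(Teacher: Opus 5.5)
Your proof that $|A(k_2)|\ge 8$ forces $A(k_2)$ to be non-cyclic is correct. It also takes a different and more elementary route than the paper, which goes through $k'_1=\mathbb{Q}(\sqrt{q(2+\sqrt2)})$ and the criterion of Mouhib--Movahhedi \cite{Mo-Mo}. Your scalar argument with $v_2\bigl((x+1)(x^2+1)\bigr)=2$ needs only $A(k)=0$ and $|A(k_1)|=2$. Your reduction of the converse to the Klein four case and your capitulation step $i_{1,2}=0$ are also correct.

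The gap is that the Klein case is then never excluded, and none of the routes you sketch can exclude it. The map $i_{1,2}\colon X/\nu_1X\to X/\nu_2X$ is multiplication by $(1+T)^2+1$. So capitulation plus Nakayama gives $\nu_2X=0$, hence $X\cong A(k_2)\cong\Lambda/(2,T^2)$. This module satisfies everything you invoke:
\begin{itemize}
\item $|X/TX|=|X/(T+2)X|=2$ and $|X/\omega_1X|=|X/\omega_2X|=|X/\nu_2X|=4$;
\item $|A(k_1)|=2<|A(k_m)|=4$ for all $m\ge2$, so Fukuda's theorem is not violated;
\item $\ranktwo X=2$;
\item Proposition \ref{Bn_Dn} gives $|\mathcal{U}_2/\mathcal{E}(k_2)|=1$ and $|D(k_2)|=2$, consistent with Lemma \ref{finiteness_Dn_nontrivial}.
\end{itemize}
So neither of your expected conclusions occurs. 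We have $A(k_2)\not\cong A(k_1)$, and $|X|=4=|X/\nu_2X|$ together with $|X/(T+2)X|=2$ is perfectly consistent.

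Your fallback fails too. Under (A2), $\mathcal{E}(k_1)$ and the local norm image of $\mathcal{U}_2$ in $\mathcal{U}_1$ are both the unique index-$2$ subgroup of $\mathcal{U}_1$ containing $\mathcal{E}(\mathbb{B}_1)$, because $\mathcal{U}_1/\mathcal{E}(\mathbb{B}_1)\cong\mathbb{Z}_2$. Hence every unit of $k_1$ is a norm from $k_2$, and the ambiguous class number formula gives $|A(k_2)^{\Gal(k_2/k_1)}|=4$. That is exactly what trivial action of $\Gal(k_2/k_1)$ on a group of order $4$ requires.

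What is missing is arithmetic input showing $D(k_2)=0$. This would contradict $D(k_2)\neq 0$, which follows from $X\cong X(k_2)$ and Lemma \ref{finiteness_Dn_nontrivial}. The paper gets this in $k'_1$, the third quadratic subextension of $k_2/\mathbb{B}_1$, in three steps.
\begin{enumerate}
\item One needs $L(k_2)=L(k'_1)$, which you can in fact obtain from your capitulation. Let $\gamma^*$ generate $\Gal(k_2/k_1)$ and $\sigma$ generate $\Gal(k_2/\mathbb{B}_2)$. Since $A(\mathbb{B}_2)=0$, $\sigma$ acts as $-1$ on $A(k_2)$, so $(\gamma^*\sigma-1)A(k_2)=-(\gamma^*+1)A(k_2)=i_{1,2}(A(k_1))=0$. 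Thus $L(k_2)/k'_1$ is abelian, and it is unramified because $k_2/k'_1$ is.
\item The principal ideal $\sqrt{q(2+\sqrt2)}\,O_{k'_1}=\mathscr{P}\mathscr{Q}_1\mathscr{Q}_2$, together with the splitting of the primes above $q$ in $L(k_1)/k_1$ (as in \cite{Kuma25}), forces $c(\mathscr{Q}_1)c(\mathscr{Q}_2)=1$ in $A(k'_1)$.
\item Hence the unique prime $\mathscr{P}$ of $k'_1$ above $2$ has trivial class. The primes of $k_2$ above $2$ therefore split completely in $L(k'_1)=L(k_2)$, i.e.\ $D(k_2)=0$.
\end{enumerate}
Without an argument of this kind, your proof of the ``only if'' direction is incomplete.
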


\begin{proof}
First, we shall show the ``only if'' part.
Assume that $A (k_2)$ is not cyclic.
Then $|A (k_2)| \geq 4$ holds.
Now we suppose that $|A (k_2)| = 4$.
Put $k'_1 = \mathbb{Q} (\sqrt{q (2+ \sqrt{2})})$,  
which is the unique intermediate field
of $k_2 / \mathbb{B}_1$ different from $k_1$, $\mathbb{B}_2$.
Since $X (k_\infty)$ is not cyclic, we can see that $|A (k'_1)| \geq 8$ 
by using \cite[Proposition 3.6]{Mo-Mo}.
Recall that $k_2 / k'_1$ is an unramified quadratic extension,
and we then see that $|A (k'_1)| = 8$ and $L (k_2) = L (k'_1)$.

We denote by $\gamma^*$ (resp. $\sigma$) the generator of
$\Gal (k_2/k_1)$ (resp. $\Gal (k_2 / \mathbb{B}_2)$).
We note that $(\sigma + 1) A (k_2)$ is trivial 
because it is the image of the map 
$A (\mathbb{B}_2) \to A (k_2)$ induced from the extension of ideals.
Using this fact, we see that 
\[ (\gamma^* \sigma -1) A (k_2) = 
(\gamma^* \sigma + \gamma^* - \gamma^* -1) A (k_2)
= (- \gamma^* -1) A (k_2). \]
The fact $L (k_2) = L (k'_1)$ implies that 
$(\gamma^* \sigma -1) A (k_2)$ is trivial, and 
then $(\gamma^* +1) A (k_2)$ is trivial.
Similarly to the above, $(\gamma^* +1) A (k_2)$ is the image of 
the map $i_{1,2} : A (k_1) \to A (k_2)$ induced from the extension of ideals.
Hence, $i_{1,2}$ is the zero map.
From this, we can see that $X (k_\infty) \cong X (k_2)$ 
by using \cite[Proposition 4.5]{B-C}.
Furthermore, by Lemma \ref{finiteness_Dn_nontrivial},
we see that $D (k_2)$ is not trivial.

On the other hand, we can show that $D (k_2)$ is trivial
by using the following argument.
In $k'_1$, there is a unique prime $\mathscr{P}$ 
lying above $2$, and 
there are two primes $\mathscr{Q}_1$, $\mathscr{Q}_2$ lying above $q$.
We denote by $c (\mathscr{P})$, $c (\mathscr{Q}_1)$, $c (\mathscr{Q}_2)$ 
the ideal classes of $k_2$ including $\mathscr{P}$, $\mathscr{Q}_1$, $\mathscr{Q}_2$, 
respectively.
We can show that these classes have order at most $2$ 
(because the class number of $\mathbb{B}_1$ is $1$ and 
the primes lying above $2$ or $q$ ramify in $k'_1 / \mathbb{B}_1$).
It can be shown that
\[ \sqrt{q (2+ \sqrt{2})} O_{k'_1} =
\mathscr{P} \mathscr{Q}_1 \mathscr{Q}_2. \]
Hence the product $c (\mathscr{P}) c (\mathscr{Q}_1) c (\mathscr{Q}_2)$ is trivial.

We can also show that every prime lying above $q$ splits in $L(k_1) / k_1$.
(See the proof \cite[Lemma 4.2]{Kuma25}.
Note that the same argument works for our situation.)
Hence every prime of $k_2$ lying above $q$ splits in
the quadratic extension $L(k_1) k_2 /k_2$.
Since both $\mathscr{Q}_1$, $\mathscr{Q}_2$ split in $k_2 / k'_1$,
they also split completely in $L(k_1) k_2 /k'_1$.

We note that $k'_1$ and $L(k_1) k_2$ are Galois extensions over $\mathbb{Q}$.
From the above result,
the possibility of the inertia subgroups of $\Gal (L(k'_1) / k'_1)$
for $\mathscr{Q}_1$, $\mathscr{Q}_2$ are the following:
\begin{itemize}
\item both are trivial.
\item both are $\Gal (L(k'_1) / L(k_1) k_2)$.
\end{itemize}
In either case, we can conclude that $c (\mathscr{Q}_1) c (\mathscr{Q}_2)$ is trivial.
This implies that $c (\mathscr{P})$ trivial. and hence the primes
of $k_2$ lying above $2$ splits completely in $L(k'_1) (=L(k_2))$.
That is, $D (k_2)$ is trivial, and it is a contradiction.
Hence $| A (k_2) | \geq 8$.

Next, we shall show the ``if'' part.
Assume that $| A (k_2) | \geq 8$.
We also use the symbols of the previous paragraphs.
By using the same argument to the above, we see that  
\[ |(\gamma^* \sigma -1) A (k_2)| = |(\gamma^* +1 ) A (k_2)| \leq 2. \]
Note that the intermediate field of $L(k_2) /k_2$ corresponding to 
$(\gamma^* \sigma -1) A (k_2)$ is the maximal unramified subextension of $k_2$ 
which is abelian over $k'_1$ 
(hence we showed the degree of this subextension is greater than or equal to $4$).
Since $k_2 / k'_1$ is unramified, we see that $|A (k'_1)| \geq 8$.
Then, by using \cite[Proposition 3.6]{Mo-Mo}, we conclude that 
$A (k_2)$ is not cyclic.
\end{proof}

By combining Theorem \ref{theorem_cyclicity_real_quad} and the second author's 
result given in \cite{Kuma25}, 
we obtain the following sufficient condition on the finiteness
of $X (k_\infty)$.

\begin{cor}\label{cor_full_rank}
Assume that $q$ and $k$ satisfy (A1) and $| A (k_1) | =2$.
We put $e = v_2 (q-1) -2$, where $v_2 ( \cdot )$ is the normalized additive 
$2$-adic valuation.
If $\ranktwo A(k_e) = 2^{e} - 1$, then $|X (k_\infty)| = |X (k_e)|$.
\end{cor}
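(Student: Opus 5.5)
The plan is to reduce to Theorem \ref{finiteness_criterion1} at the layer $n=e$, or, equivalently, to Lemma \ref{finiteness_Dn_nontrivial}. Since $A(k)$ is trivial, $i_e(A(k))$ is trivial. So it suffices to show that $M_\mathfrak{p}(k_e)=L(k_e)$, i.e.\ that $Z_\mathfrak{p}(k_e)$ is trivial.

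First I would dispose of the case $|\mathfrak{X}_\mathfrak{p}(k)|>2$. By (A1) we have $q\equiv 1 \pmod{16}$, hence $e\geq 2$ and $2^e-1\geq 3$. If $|\mathfrak{X}_\mathfrak{p}(k)|>2$ and $|A(k_1)|=2$, then Theorem \ref{theorem_cyclicity_real_quad} (2a) gives $\ranktwo X(k_\infty)=1$. Since $X(k_\infty)\to X(k_e)$ is surjective, this forces $\ranktwo A(k_e)\leq 1<2^e-1$, contradicting the hypothesis. Hence we may assume (A2), i.e.\ $|\mathfrak{X}_\mathfrak{p}(k)|=2$. Then Proposition \ref{Bn_Dn} applies for every $n$: $A(k)$ is trivial and $\rankZ E(k)=1=\rankZ E(\mathbb{Q})+1$. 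It gives
\[ |\mathcal{U}_e/\mathcal{E}(k_e)|\cdot|D(k_e)|=2. \]
So it is enough to show that $D(k_e)$ is non-trivial. Then Lemma \ref{finiteness_Dn_nontrivial} yields $X(k_\infty)\cong X(k_e)$, which is the claim.

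Next I would record the rank information. By Remark \ref{rank_upper_bound}, $\ranktwo X(k_\infty)\leq 2^e-1=\ranktwo A(k_e)\leq \ranktwo X(k_\infty)$, so all these ranks coincide. By the contrapositive of Lemma \ref{necessary_rank_stability}, also $\ranktwo \mathfrak{X}_\mathfrak{p}(k_e)=\ranktwo A(k_e)$. This alone does not kill $Z_\mathfrak{p}(k_e)$, since a cyclic $Z_\mathfrak{p}(k_e)$ of order $2$ could sit inside the Frattini subgroup. So the rank equality must be used differently.

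The key step, and the main obstacle, is to show $D(k_e)\neq 1$ from the maximality of the rank. The upper bound $2^e-1$ comes from genus theory for $k_e/\mathbb{B}_e$: exactly the $2^e$ primes of $\mathbb{B}_e$ above $q$ ramify, and $A(\mathbb{B}_e)$ is trivial. Equality $\ranktwo A(k_e)=2^e-1$ should therefore mean (this is what I expect to take from \cite{Kuma25}) that $A(k_e)[2]$ equals the ambiguous part $A(k_e)^{\Gal(k_e/\mathbb{B}_e)}$, and that this is generated by the classes of the ramified primes $\mathfrak{Q}_i$ above $q$, subject only to the single relation coming from $\sqrt{q}$. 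I would then imitate the argument in the proof of Proposition \ref{k2_non_cyclic}. Write $\mathfrak{p}^0_e O_{k_e}=\mathfrak{p}_e\mathfrak{p}'_e$ with $\mathfrak{p}_e\mathfrak{p}'_e$ principal up to odd order, and use the Galois action of $\Gal(k_e/\mathbb{Q})$ on these classes. Assume $D(k_e)$ is trivial, so that $\mathfrak{p}'_e$ splits completely in $L(k_e)$. Comparing with the maximal elementary abelian unramified extension determined by the $\mathfrak{Q}_i$, which is a genus field whose Artin symbols at $\mathfrak{p}'_e$ are computed by quadratic residue symbols, I would aim to derive that $\mathfrak{p}'_e$ must be inert in some genus character. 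By (A2) and $|A'(k_1)|=2$ this is the case at level $1$, and I expect it to persist at level $e$. This would give a contradiction and show $D(k_e)\neq 1$. If this route fails, I would instead try to show directly that the inertia generator of $Z_\mathfrak{p}(k_e)$ cannot be a square in $\mathfrak{X}_\mathfrak{p}(k_e)$ when the ambiguous classes exhaust the $2$-torsion.
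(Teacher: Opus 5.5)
Your reduction is the same as the paper's. Part (2a) of Theorem~\ref{theorem_cyclicity_real_quad} rules out $|\mathfrak{X}_\mathfrak{p}(k)|>2$, so (A2) holds, and Lemma~\ref{finiteness_Dn_nontrivial} reduces the claim to $D(k_e)\neq 1$. That last step is the whole content of the corollary, and your proposal does not prove it. The paper takes it from \cite[Theorem 1.5]{Kuma25}, which says exactly that $\ranktwo A(k_e)=2^e-1$ forces $D(k_e)\neq 1$.

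The heuristic you sketch for this step rests on a false premise. Under (A2) one has $|A(k_1)|=|A'(k_1)|=2$, hence $D(k_1)=1$. So $\mathfrak{p}'_1$ splits completely in $L(k_1)$, which is the genus field of $k_1/\mathbb{B}_1$. At level $1$ the prime is therefore \emph{not} inert in any genus character, even though $\ranktwo A(k_1)=1=2^1-1$ is maximal (and $1<e$ because $q\equiv 1 \pmod{16}$). So ``maximal rank at layer $n$ implies $D(k_n)\neq 1$'' is false in general, and no persistence argument starting from level $1$ can work. The proof must use that $n=e$ is exactly the layer where $v_2(q-1)=n+2$.

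One way to fill the gap runs as follows.
\begin{itemize}
\item Since $A(\mathbb{B}_e)=1$, the group $\Gal(k_e/\mathbb{B}_e)$ acts by $-1$ on $A(k_e)$, so $A(k_e)[2]$ is the ambiguous class group.
\item The ramified primes of $k_e/\mathbb{B}_e$ are the $2^e$ primes $\mathfrak{Q}_i$ of $\mathbb{B}_e$ above $q$, each with completion $\mathbb{Q}_q$. The genus formula then shows that $\ranktwo A(k_e)=2^e-1$ forces every unit of $\mathbb{B}_e$ to be a norm from $k_e^\times$. By Hilbert symbols at the $\mathfrak{Q}_i$, every unit of $\mathbb{B}_e$ is then a square modulo every $\mathfrak{Q}_i$.
\item Hence the maximal elementary abelian $2$-extension $L_q$ of $\mathbb{B}_e$ unramified outside $q$ (and at infinity) has Galois group $\prod_i \mathbb{F}_q^\times/\mathbb{F}_q^{\times 2}$. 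It contains $k_e$ and is unramified over $k_e$, since each inertia group is cyclic of order $2$ and non-trivial on $k_e$.
\item The Frobenius of $\mathfrak{p}'_e$ in $L_q/k_e$ equals that of $\mathfrak{p}^0_e=(\pi)$ in $L_q/\mathbb{B}_e$. Here $\pi=(1+\zeta)(1+\zeta^{-1})=\zeta^{-1}(1+\zeta)^2$, with $\zeta$ a primitive $2^{e+2}$-th root of unity.
\item Because units are squares modulo each $\mathfrak{Q}_i$, this Frobenius is trivial iff $\pi$ is a square modulo $\mathfrak{Q}_i$. That happens iff the image of $\zeta$, an element of order $2^{e+2}$ in $\mathbb{F}_q^\times$, is a square. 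This fails precisely because $v_2(q-1)=e+2$.
\item So the class of $\mathfrak{p}'_e$ has non-trivial image in $\Gal(L_q/k_e)$, which is a quotient of $A(k_e)$. Its $2$-part is therefore non-trivial, and $D(k_e)\neq 1$.
\end{itemize}
For $n<e$ the same computation gives a trivial Frobenius, consistent with $D(k_1)=1$. With this argument, or with the citation of \cite[Theorem 1.5]{Kuma25}, your proof is complete.
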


\begin{proof}
By Theorem \ref{theorem_cyclicity_real_quad}, 
we see that $|\mathfrak{X}_\mathfrak{p} (k)|$ must be $2$ 
because $| A (k_1) | =2$ and \\
$\ranktwo X (k_\infty) > 1$.
Hence $k$ satisfies (A2).

It is already known that if $\ranktwo A(k_e) = 2^{e} - 1$
then $D (k_e)$ is not trivial (\cite[Theorem 1.5]{Kuma25}).
Thus the assertion follows from Lemma \ref{finiteness_Dn_nontrivial}.
\end{proof}

Recall that $\ranktwo A(k_n) \leq 2^n -1$ holds.
We shall introduce the following criterion to 
decide whether $\ranktwo A(k_n)$ is maximal or not.
(Actually, we can determine $\ranktwo A(k_n)$ by computing the 
ray class group of $\mathbb{B}_n$ modulo $q O_{\mathbb{B}_n}$.
See \cite[Corollary 2.2]{Mi-Mo}.
The following is a variant of this result.)

\begin{lem}
Assume that $q$ and $k$ satisfy (A1) and (A2).
Let $n$ be a positive integer.
Assume also that $q$ splits completely in $\mathbb{B}_n$.
Let $\mathfrak{Q}$ be a prime of $\mathbb{B}_n$ lying above $q$ 
(any one of the $2^n$ primes), and 
$L_\mathfrak{Q} (\mathbb{B}_n)$ the maximal abelian $2$-extension 
unramified outside $\mathfrak{Q}$.
Then $\ranktwo A(k_n)=2^n -1$ if and only if 
$L_\mathfrak{Q} (\mathbb{B}_n) / \mathbb{B}_n$ is non-trivial.
\end{lem}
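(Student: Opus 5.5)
Our plan is to compute $\ranktwo A(k_n)$ by genus theory for the relative quadratic extension $k_n/\mathbb{B}_n$, and then to read off maximality of the rank from a ray class group of $\mathbb{B}_n$. We have $k_n = \mathbb{B}_n(\sqrt{q})$. The primes of $\mathbb{B}_n$ that ramify in $k_n/\mathbb{B}_n$ are exactly the $2^n$ primes above $q$, since $q\equiv 1 \pmod 4$ and the unique prime above $2$ splits in $k_n$ while the infinite places stay real. The class number of $\mathbb{B}_n$ is odd. Hence the ambiguous class formula gives
\[ \ranktwo A(k_n) = 2^n - 1 - \ranktwo \bigl( E(\mathbb{B}_n) / E(\mathbb{B}_n) \cap \Gnorm (k_n^\times) \bigr), \]
which recovers the bound $2^n-1$ of Remark \ref{rank_upper_bound}. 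So $\ranktwo A(k_n) = 2^n-1$ if and only if every unit of $\mathbb{B}_n$ is a norm from $k_n$.

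By Hasse's norm theorem and the product formula, together with the fact that units are local norms at unramified places, it suffices to check the local conditions at the primes $\mathfrak{Q}_1, \dots, \mathfrak{Q}_{2^n}$ above $q$. Because $q$ splits completely in $\mathbb{B}_n$, each completion is $\mathbb{Q}_q$, and $(\varepsilon, q)_{\mathfrak{Q}_i}$ is simply the quadratic residue symbol of $\varepsilon$ modulo $\mathfrak{Q}_i$. Thus maximal rank is equivalent to the following: every unit of $\mathbb{B}_n$ is a square modulo every $\mathfrak{Q}_i$. Since $\Gal(\mathbb{B}_n/\mathbb{Q})$ permutes the $\mathfrak{Q}_i$ transitively and preserves $E(\mathbb{B}_n)$, this is equivalent to every unit being a square modulo the single prime $\mathfrak{Q}$.

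Next we translate the condition into class field theory. Because $A(\mathbb{B}_n)$ is trivial and $\mathfrak{Q} \nmid 2$, the Galois group of the maximal abelian $2$-extension of $\mathbb{B}_n$ unramified outside $\mathfrak{Q}$ (with ramification allowed at infinity or not, which we must fix consistently with the paper's convention) is the $2$-part of
\[ (O_{\mathbb{B}_n}/\mathfrak{Q})^\times / \mathrm{image}(E(\mathbb{B}_n)) \cong \mathbb{F}_q^\times / \mathrm{image}(E(\mathbb{B}_n)). \]
This group is cyclic, so $L_\mathfrak{Q}(\mathbb{B}_n) \neq \mathbb{B}_n$ if and only if the image of the units lies in $(\mathbb{F}_q^\times)^2$. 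That is exactly the condition obtained above, which proves the equivalence.

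The main obstacle is the treatment of infinite places and of $-1$. The unit group $E(\mathbb{B}_n)$ contains $-1$, and $-1$ is a square modulo $q$ since $q\equiv 1 \pmod 8$, so $-1$ causes no trouble. We also need to check that the paper's definition of $L_\mathfrak{Q}(\mathbb{B}_n)$ (unramified outside $\mathfrak{Q}$, so infinite places are implicitly allowed to ramify, or not) matches the use of the full unit group rather than the totally positive units. If the convention allows ramification at infinity, we use totally positive units, and we must argue that the positivity signs do not interfere. We would handle this via the exact sequence relating the narrow and wide ray class groups, together with the fact that $\mathbb{B}_n$ has units of independent signs (its narrow class number is odd). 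The hypotheses (A1) and (A2) appear not to be needed beyond guaranteeing the setting.
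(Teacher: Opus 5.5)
Your argument is correct, but it follows a different route from the paper's.

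\textbf{The paper's route.} The paper proves only the ``if'' direction directly and defers ``only if'' to the second author's earlier paper. It takes $L_q$, the maximal elementary abelian $2$-extension of $\mathbb{B}_n$ unramified outside $q$, and uses the non-triviality of $L_\mathfrak{Q}(\mathbb{B}_n)$ and its Galois conjugates to get $[L_q:\mathbb{B}_n]=2^{2^n}$. It then notes that $k_n\subset L_q$ and that $L_q/k_n$ is unramified: each inertia group is cyclic inside an elementary abelian group, hence of order $2$, and that order is already used up by $k_n/\mathbb{B}_n$. This gives an unramified elementary abelian extension of $k_n$ of degree $2^{2^n-1}$, and the general bound $\ranktwo A(k_n)\le 2^n-1$ gives equality.

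\textbf{Your route.} You use Chevalley's ambiguous class formula to reduce maximality to $E(\mathbb{B}_n)\subset\Gnorm(k_n^\times)$. Hasse's norm theorem and the Hilbert symbols at the $\mathfrak{Q}_i$ turn this into ``every unit is a square modulo $\mathfrak{Q}$''. You then recognize that condition as non-triviality of the $2$-part of the cyclic group $(O_{\mathbb{B}_n}/\mathfrak{Q})^\times/\mathrm{im}\,E(\mathbb{B}_n)$.

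\textbf{Comparison.} Your approach gives both directions at once and is self-contained. It also yields the quantitative formula $\ranktwo A(k_n)=2^n-1-\ranktwo\bigl(E(\mathbb{B}_n)/E(\mathbb{B}_n)\cap\Gnorm(k_n^\times)\bigr)$, which is essentially the ray-class criterion the paper attributes to Mizusawa--Mouhib. The paper's approach avoids norm computations and exhibits the maximal unramified elementary abelian $2$-extension of $k_n$ explicitly as $L_q$.

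\textbf{A step to state explicitly.} Passing from $|A(k_n)^{G}|$ to $\ranktwo A(k_n)$ needs a justification. Since $A(\mathbb{B}_n)=0$, $1+\sigma$ annihilates $A(k_n)$. So $\sigma$ acts as $-1$, and $A(k_n)^{G}=A(k_n)[2]$.

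\textbf{The convention question.} In the paper, ``unramified outside $\mathfrak{Q}$'' keeps the real places unramified. This is forced by (F4) for $F=\mathbb{Q}$, $p=2$, and the paper's own proof needs $L_q$ totally real to conclude that $L_q/k_n$ is unramified. So your use of the full unit group and the wide ray class group is the right one.

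\textbf{Drop your fallback paragraph, because it would fail.} Since $\mathbb{B}_n$ has odd narrow class number, its totally positive units are exactly $E(\mathbb{B}_n)^2$. Hence the $2$-part of the narrow ray class group modulo $\mathfrak{Q}$ is always non-trivial. Under that convention the lemma would be false whenever $\ranktwo A(k_n)<2^n-1$: the signs do not merely interfere, they make the criterion trivially satisfied.
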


\begin{proof}[Outline of the proof]
The ``only if'' part follows from the argument given in \cite[p.51]{Kuma25}.
(Strictly speaking, in \cite{Kuma25}, it is assumed that 
the decomposition field for $q$ of $\mathbb{B}_\infty / \mathbb{B}$ 
must be $\mathbb{B}_n$.
However, we can show the assertion under our assumption.)

We shall show the ``if'' part.
Let $L_q / \mathbb{B}_n$ be the the maximal elementary abelian 
$2$-extension unramified outside $q$.
By the non-triviality of $L_\mathfrak{Q} (\mathbb{B}_n) / \mathbb{B}_n$, 
we can show that the degree $[L_q : \mathbb{B}_n]$ is $2^{2^n}$.
$k_n$ is an intermediate field of  $L_q / \mathbb{B}_n$.
We note that $L_q / k_n$ is an unramified extension, 
because all primes lying above $q$ ramify in  $L_q / \mathbb{B}_n$ 
and the inertia subgroup of $\Gal(L_q / \mathbb{B}_n)$ for each prime is cyclic.
Hence $L_q / k_n$ is an unramified elementary abelian $2$-extension of 
degree $2^{2^n -1}$, and this yields that $\ranktwo A(k_n)=2^n -1$.
\end{proof}

\begin{example}\label{example_rank_7}
For the case where $q$ satisfies (A1), (A2), and $q \equiv 33 \pmod{64}$, 
if $\ranktwo A(k_3) = 7$ then $|X (k_\infty)| = |X (k_3)|$.
We computed the ray class group of $\mathbb{B}_3$ 
(modulo a prime lying above $q$) by using PARI/GP \cite{PARI}, 
and checked that $70$ prime numbers $q$ satisfy $\ranktwo A(k_3) = 7$ in the 
range $q < 10,000,000$ 
($q= 118369, 128033, 143137, \ldots, 9800801$).
We note that $\ranktwo \mathfrak{X}_\mathfrak{p} (k_2) \leq 4$ always holds.
Hence these examples satisfy 
\[ \ranktwo \mathfrak{X}_\mathfrak{p} (k_2) < 
\ranktwo \mathfrak{X}_\mathfrak{p} (k_3) \quad \text{and}  
\quad \ranktwo \mathfrak{X}_\mathfrak{p} (k_3) = \ranktwo A (k_3). \]
(These examples also satisfy 
both $|\mathfrak{X}_\mathfrak{p} (k_2)| < |\mathfrak{X}_\mathfrak{p} (k_3)|$ and 
$|\mathfrak{X}'_\mathfrak{p} (k_2)| < |\mathfrak{X}'_\mathfrak{p} (k_3)|$, 
but $|\mathfrak{X}_\mathfrak{p} (k_3)| = |X (k_3)|$ holds.)
\end{example}

The following seems useful for observing the $\lambda$-invariant in our situation.

\begin{prop}\label{prop_A2}
Assume that $p$, $k$ satisfy (A1) and (A2).
We denote by $\lambda$ the $\lambda$-invariant of $k_\infty  /k$.

\smallskip

\noindent (1) Either of the following holds: 
“$\mathfrak{X}_\mathfrak{p} (k_\infty)$ is finite” or 
“$\mathfrak{X}_\mathfrak{p} (k_\infty)$ has no nontrivial 
finite $\Lambda$-submodule”.

\smallskip

\noindent (2) For every positive integer $n$, $\lambda$ never satisfies 
$0 < \lambda < \ranktwo \mathfrak{X}_\mathfrak{p} (k_n)$.
In particular, $\lambda$ is never equal to $1$.
\end{prop}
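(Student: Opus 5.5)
The plan is to exploit the fact that, under (A2), $X=\mathfrak{X}_\mathfrak{p}(k_\infty)$ is a \emph{cyclic} $\Lambda$-module. By Theorem \ref{quotient_restricted} (1) with $n=0$ we have $X/TX\cong \mathfrak{X}_\mathfrak{p}(k)$, which has order $2$ by (A2). Hence $X/(2,T)X$ has order at most $2$, and the topological Nakayama lemma gives $X\cong \Lambda/I$ for some ideal $I$. The condition $|X/TX|=2$ then translates into
\[ I+(T)=(2,T). \]
In particular, reducing modulo $T$, the image of $I$ in $\mathbb{Z}_2$ is exactly $2\mathbb{Z}_2$.

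For (1), since $\Lambda$ is a UFD, I would write $I=gJ$, where $g$ is the gcd of the elements of $I$ and $J$ is either $\Lambda$ or an ideal of height $2$ (so $\Lambda/J$ is finite). I would then split into cases.
\begin{itemize}
\item If $J=\Lambda$, then $X\cong\Lambda/(g)$. This module has no nontrivial finite $\Lambda$-submodule, by the standard fact for $\Lambda/(f)$ with $f\neq 0$.
\item If $g$ is a unit, then $X\cong\Lambda/J$ is finite.
\item The remaining case, $g$ a non-unit and $J\subseteq\mathfrak{m}=(2,T)$, must be excluded.
\end{itemize}
In the last case $I=gJ\subseteq g\mathfrak{m}$. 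Reducing modulo $T$, the image of $I$ in $\mathbb{Z}_2$ lies in $g(0)\cdot 2\mathbb{Z}_2$. Since $g$ is a non-unit, $g(0)\in 2\mathbb{Z}_2$, so this image lies in $4\mathbb{Z}_2$. This contradicts $I+(T)=(2,T)$. (Here $X$ is torsion, so $I\neq0$ and $g\neq0$.) This proves (1): either $X$ is finite, or $X\cong\Lambda/(g)$ has no nontrivial finite submodule.

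For (2), suppose $\lambda>0$, so $X$ is infinite. By (1), $X\cong\Lambda/(g)$. Since $\mu=0$ for $k_\infty/k$, we may take $g$ to be a distinguished polynomial of degree $\lambda$ (times a unit). Hence $X\cong\mathbb{Z}_2^{\lambda}$ as a $\mathbb{Z}_2$-module, and $\ranktwo X=\lambda$. For every $n$, $\mathfrak{X}_\mathfrak{p}(k_n)\cong X/\omega_n X$ is a quotient of $X$ by Theorem \ref{quotient_restricted} (1). Therefore $\ranktwo\mathfrak{X}_\mathfrak{p}(k_n)\le \ranktwo X=\lambda$, which rules out $0<\lambda<\ranktwo \mathfrak{X}_\mathfrak{p}(k_n)$. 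For $\lambda\neq 1$, I would invoke Theorem \ref{theorem_cyclicity_real_quad} (1a): under (A1) and (A2) it gives $\ranktwo X(k_\infty)>1$. So if $\lambda>0$, then $\lambda=\ranktwo X\ge 2$. Alternatively, one can note that $\ranktwo\mathfrak{X}_\mathfrak{p}(k_n)\ge 2$ for some $n$ and apply the first part.

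The main obstacle is the case analysis in (1), namely excluding a mixed module with a finite submodule and an infinite quotient. The step doing the real work is the congruence argument modulo $T$ that uses $|X/TX|=2$ exactly. It should be checked carefully that $I+(T)=(2,T)$ really follows from $X/TX\cong\mathbb{Z}/2$. This holds because $\Lambda/(I+(T))\cong X/TX$ and $\Lambda/(T)\cong\mathbb{Z}_2$.
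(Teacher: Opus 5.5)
Your proof is correct. Part (2) is the paper's argument. For $\lambda>0$, part (1) makes $X$ a free $\mathbb{Z}_2$-module of rank $\lambda$: the paper deduces this from the absence of finite submodules plus finite generation over $\mathbb{Z}_2$, and you deduce it from $X\cong\Lambda/(g)$ and Weierstrass preparation with $\mu=0$. Hence $\ranktwo\mathfrak{X}_\mathfrak{p}(k_n)\le\lambda$, and $\lambda\neq1$ follows from $\ranktwo X\ge 2$ (Theorem~\ref{theorem_cyclicity_real_quad}~(1a)), which is proved independently of this proposition.

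Where you differ is (1). The paper gives no argument there; it only says that $|X/\omega_0X|=2$ lets one rerun Ozaki's proof from \cite{Oza97cyc}. You give a self-contained proof using the ideal theory of $\Lambda$, and the steps check out:
\begin{itemize}
\item Nakayama gives $X\cong\Lambda/I$ with $I+(T)=(2,T)$.
\item Writing $I=gJ$ with $g$ the gcd, $J$ is $\Lambda$ or $\mathfrak{m}$-primary, because height-one primes of the UFD $\Lambda$ are principal.
\item The mixed case is excluded because $I\subseteq g\mathfrak{m}$ would force $I+(T)\subseteq(4,T)$.
\end{itemize}
This yields the sharper statement that an infinite $X$ is isomorphic to $\Lambda/(g)$, which makes (2) immediate.

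For comparison, there is a presentation-free argument using only $|X/TX|=2$. Put $Y=X/X_{\mathrm{fin}}$ and assume $X$ is infinite. Then $Y\neq0$ and $Y/TY$ is finite, so $\{y\in Y: Ty=0\}$ is a finite submodule of $Y$ and hence zero. The snake lemma then gives an exact sequence $0\to X_{\mathrm{fin}}/TX_{\mathrm{fin}}\to X/TX\to Y/TY\to0$. Since $|Y/TY|\ge 2=|X/TX|$, we get $X_{\mathrm{fin}}=TX_{\mathrm{fin}}$, so $X_{\mathrm{fin}}=0$ by Nakayama. That version needs neither the presentation $X\cong\Lambda/I$ nor any ideal theory of $\Lambda$. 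On the other hand, it only gives the absence of finite submodules, not the explicit shape of $X$ that your argument produces.
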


\begin{proof}
Since $|X / \omega_0 X| = 2$, 
the assertion of (1) can be shown by
using the same argument as given in \cite{Oza97cyc}
(see the proof of Theorems 1 and 2 of \cite{Oza97cyc}).

We shall show (2).
Assume that $0 < \lambda < \ranktwo \mathfrak{X}_\mathfrak{p} (k_n)$.
Then, by (1) and the fact that $X$ is finitely generated over $\mathbb{Z}_2$, 
we see that $X$ is a free $\mathbb{Z}_2$-module of rank $\lambda$.
However, since
$\ranktwo \mathfrak{X}_\mathfrak{p} (k_n) \leq \ranktwo X$, 
it is a contradiction.
The first assertion follows.

We recall that $\ranktwo X \geq 2$.
Then the second assertion of (2) also follows from the above argument.
\end{proof}

Hence, under the assumptions (A1) and (A2), 
if the inequality $\lambda < \ranktwo \mathfrak{X}_\mathfrak{p} (k_n)$ is shown 
for some $n$, then $\lambda =0$ is derived.

\begin{thm}\label{theorem_second_layer}
Assume that $q$ and $k$ satisfy (A1).
Assume also that $|A (k_1)| =2$, and  
\[ A (k_2) \cong \mathbb{Z} / 2 \mathbb{Z} \oplus \mathbb{Z} / 4 \mathbb{Z} \]
as an abelian group.
Then, $|X (k_\infty)|$ is finite.
\end{thm}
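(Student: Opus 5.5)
The plan is first to place $k$ in case (A2). The group $A(k_2) \cong \mathbb{Z}/2\mathbb{Z} \oplus \mathbb{Z}/4\mathbb{Z}$ is not cyclic, so $X = X(k_\infty)$, which surjects onto $X(k_2)$, has $\ranktwo X \geq 2$. Since $|A(k_1)| = 2$, Theorem \ref{theorem_cyclicity_real_quad} leaves only case (1a). Hence $|\mathfrak{X}_\mathfrak{p}(k)| = 2$, and (A1) and (A2) both hold. In addition, $\ranktwo A(k_2) = 2$, so Proposition \ref{rank_X_2} gives $\ranktwo X = 2$.

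Next I would use the dichotomy of Proposition \ref{prop_A2}(1). Either $X$ is finite, and we are done, or $X$ has no nontrivial finite $\Lambda$-submodule. In the second case $\mu = 0$ gives that $X$ is a free $\mathbb{Z}_2$-module of rank $\lambda$. Then $\lambda = \ranktwo X = 2$, which is consistent with Proposition \ref{prop_A2}(2) ($\lambda \neq 1$). So it remains to rule out that $X \cong \mathbb{Z}_2^2$ with $T$ acting by a $2 \times 2$ matrix.

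To rule this out I would translate the known finite quotients into conditions on that matrix. Write $f(T) = T^2 + aT + b$ for the characteristic polynomial of $T$. Since $A(k)$ is trivial we have $X(k_n) \cong X/\nu_n X$, and Theorem \ref{quotient_restricted} gives $\mathfrak{X}_\mathfrak{p}(k_n) \cong X/\omega_n X$. The inputs are then:
\[ |X/TX| = 2, \qquad |X/(T+2)X| = 2, \qquad X/(T+2)(T^2+2T+2)X \cong \mathbb{Z}/2\mathbb{Z} \oplus \mathbb{Z}/4\mathbb{Z}. \]
The first gives $v_2(b) = 1$. Since $X/\nu_2 X$ is non-cyclic, $T$ is nilpotent modulo $2$, which forces $a \equiv 0 \pmod 2$. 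Both roots of $f$ then have valuation $1/2$.

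I would then compute the $\Lambda$-module $X/\nu_2 X$ in this model. Its order is the $2$-adic absolute value of $\mathrm{Res}(f, \nu_2)$, and its elementary divisors come from the matrix of $\nu_2(T) = (T+2)(T^2+2T+2)$. The aim is to show that whenever the order is $8$, the group is cyclic of order $8$ or of type $\mathbb{Z}/2\mathbb{Z} \oplus \mathbb{Z}/2\mathbb{Z} \oplus \cdots$, never $\mathbb{Z}/2\mathbb{Z} \oplus \mathbb{Z}/4\mathbb{Z}$. Alternatively, one could show that the quotient forces $\ranktwo \mathfrak{X}_\mathfrak{p}(k_2) \geq 3 > \lambda$, which contradicts Proposition \ref{prop_A2}(2).

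A complementary route is to use Lemma \ref{finiteness_Dn_nontrivial} and show that $D(k_2) \neq 1$. Here Proposition \ref{Bn_Dn} gives $|\mathfrak{X}_\mathfrak{p}(k_2)| = 2\,|X'(k_2)|$. Moreover, the argument in Proposition \ref{k2_non_cyclic} shows that $|i_{1,2}(A(k_1))| \leq 2$ and that $|A(k'_1)| = 8$ or $16$. One would combine these with the triviality-of-$c(\mathscr{P})$ argument from that proof, which I would try to run in reverse.

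I expect the main obstacle to be this explicit $2$-adic analysis of the rank-$2$ lattice, namely pinning down the Smith normal form of $\nu_2(T)$ from the constraints on $a$ and $b$. I would first try the normalization $T = \begin{pmatrix} 0 & -b \\ 1 & -a \end{pmatrix}$, reduce modulo $8$, and case-split on $v_2(a) = 1$ versus $v_2(a) \geq 2$ and on $b/2 \bmod 4$.
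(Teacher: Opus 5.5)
Your proposal is correct and takes a somewhat different, more direct route than the paper. It would finish in one line, but you have misjudged where the contradiction comes from.

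\textbf{Where the routes agree and differ.} Your reduction is the same as the paper's. Theorem \ref{theorem_cyclicity_real_quad} forces case (1a), hence (A2). Proposition \ref{rank_X_2} gives $\ranktwo X=2$. Proposition \ref{prop_A2}(1) then leaves only the case $X\cong\mathbb{Z}_2^2$ with characteristic polynomial $f=T^2+aT+b$, where $v_2(b)=1$ and $a\in 2\mathbb{Z}_2$.

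After that you diverge. The paper works with $X/\omega_2X=\mathfrak{X}_\mathfrak{p}(k_2)$. It uses $|X^{\Gamma_n}|/|X_{\Gamma_n}|=\prod_\zeta|F(\zeta-1)|_2$ to get $|\mathfrak{X}_\mathfrak{p}(k_2)|\ge 32$. It then needs $|Z_\mathfrak{p}(k_2)|\le 2$ (Proposition \ref{inertia_subgroup_order}) to reach $|X(k_2)|\ge 16$. You work with $X(k_2)\cong X/\nu_2X$ directly on the lattice. This avoids both the $X^{\Gamma_n}$ formula and the bound on $Z_\mathfrak{p}(k_2)$.

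\textbf{The step you expected to be hard is trivial.} No Smith normal form is needed, and no case split on $b/2 \bmod 4$. The order alone is already too large:
\[
v_2\bigl(\det\nu_2(T)\bigr)=v_2\bigl(f(-2)\bigr)+v_2\bigl(f(i-1)f(-i-1)\bigr)=1+v_2\bigl((b-a)^2+(a-2)^2\bigr)\ge 4 .
\]
The last bound needs only a trivial split on $v_2(a)$:
\begin{itemize}
\item If $v_2(a)=1$, then both $b-a$ and $a-2$ lie in $4\mathbb{Z}_2$.
\item If $v_2(a)\ge 2$, then both have valuation exactly $1$, and $s^2+t^2\equiv 2\pmod 8$ for units $s,t$.
\end{itemize}
Hence $|X(k_2)|=|X/\nu_2X|=|\det\nu_2(T)|_2^{-1}\ge 16$, and the quotient is infinite if the determinant vanishes. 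This contradicts $|A(k_2)|=8$.

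Your stated aim, that order $8$ forces a cyclic or elementary quotient, is therefore vacuous: order $8$ never occurs. Moreover, $X/\nu_2X$ is never cyclic in this model, since Cayley--Hamilton gives $T^2\equiv 0$ and hence $\nu_2(T)\equiv T^3\equiv 0\pmod 2$.

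\textbf{Drop both fallbacks.}
\begin{itemize}
\item The rank route cannot work. $\ranktwo\mathfrak{X}_\mathfrak{p}(k_2)\le\ranktwo X=2$ holds unconditionally, so $\ranktwo\mathfrak{X}_\mathfrak{p}(k_2)\ge 3$ is simply false.
\item The $D(k_2)$ route is not a concrete plan. Showing $D(k_2)\neq 1$ would, by Lemma \ref{finiteness_Dn_nontrivial}, prove the much stronger $X(k_\infty)\cong X(k_2)$. The paper does not establish that in this generality (see Remark \ref{rem_previous_results} and the closing conjecture).
\end{itemize}
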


\begin{proof}
As same as the proof of Corollary \ref{cor_full_rank}, we see that 
$| \mathfrak{X}_\mathfrak{p} (k) | =2$ because $A (k_2)$ is not cyclic.

We denote by $\lambda$ the $\lambda$-invariant of $k_\infty  /k$.
By Proposition \ref{rank_X_2}, we see that $\lambda \leq 2$
(recall that $\lambda \leq \ranktwo X (k_\infty)$ holds).
Moreover, the case $\lambda = 1$ does not occur by Proposition \ref{prop_A2} (2).
Hence it is sufficient to show that the case $\lambda = 2$ does not occur.

We put $\Gamma_n = \Gal (k_\infty / k_n)$.
Let $X^{\Gamma_n}$ be the $\Gamma_n$-invariant submodule of $X$.
We also write $X / \omega_n X$ as $X_{\Gamma_n}$ in this proof.

We denote by $X_{\mathrm{fin}}$ the maximal finite $\Lambda$-submodule of $X$.
It can be shown that 
$X^{\Gamma_n} = (X_{\mathrm{fin}})^{\Gamma_n}$ for all $n$ 
(see, e.g., \cite[p.225]{Oza97cyc}).
Hence if $\lambda > 0$, then $X^{\Gamma_n}$ is trivial 
by Proposition \ref{prop_A2}.

Assume that $\lambda =2$.
Let $F(T)$ be the characteristic polynomial of
$X$ as a $\Lambda$-module
(in the sense of \cite[(5.3.9) Definition]{NSW}).
We may assume that $F(T)$ is a monic distinguished polynomial
with degree $2$.
We can write
\[ F(T) = T^2 + a T + b \]
with $a, b \in 2 \mathbb{Z}_2$.
It is known that
\[ \frac{|X^{\Gamma_n}|}{|X_{\Gamma_n}|}
= \prod_{\zeta} | F(\zeta -1) |_2, \]
where $\zeta$ runs all elements satisfying $\zeta^{2^n} =1$ and
$| \cdot |_2$ denotes the normalized multiplicative $2$-adic absolute value.
(See \cite[Chapter IV, \S 3, Exercise 3]{NSW}.
Here we use the multiplicative absolute value to match the notation of this exercise.)
We recall that $|X^{\Gamma_n}|=1$.

Since $|X_{\Gamma_0}| = |\mathfrak{X}_\mathfrak{p} (k)| =2$,
we see that
\[ | F(0) |_2 = \dfrac{1}{2}. \]
Hence $|b|_2 = \dfrac{1}{2}$.
Moreover, since $|X_{\Gamma_1}| = |\mathfrak{X}_\mathfrak{p} (k_1)| =4$,
we see that
\[ | F(0) |_2 \cdot | F(-2) |_2 = \dfrac{1}{4}. \]
Let $i$ be a primitive fourth root of unity.
Then we see that
\[ F(i-1) F(-i-1) = (b-a)^2 + (a-2)^2. \]
Under the condition that $|b|_2 = \dfrac{1}{2}$, we can show that
$|(b-a)^2 + (a-2)^2|_2 \leq \dfrac{1}{8}$.
Hence, by summarizing these, we obtain that
\[ | F(0) |_2 \cdot | F(-2) |_2 \cdot | F(i-1) |_2 \cdot | F(-i-1) |_2
\leq \dfrac{1}{32}. \]
This implies that
\[ | X_{\Gamma_2} | = |\mathfrak{X}_\mathfrak{p} (k_2)| \geq 32. \]
We also note that $|Z_\mathfrak{p} (k_2)| \leq 2$, then we see that  
$|X (k_2)| \geq 16$.
This is a contradiction.
\end{proof}

By combining the above results, we obtain the following:

\begin{cor}[a weak version of {\cite[Theorem 1.1]{Sasaki}}]
Assume that $q$ and $k$ satisfy (A1).
If $q \equiv 17 \pmod{32}$, $|A (k_1)| = 2$, and $|A (k_2)| = 8$,  
then $|X (k_\infty)|$ is finite.
\end{cor}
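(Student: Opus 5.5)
The plan is to reduce to Theorem \ref{theorem_second_layer} by showing that, under the stated hypotheses, $A(k_2) \cong \mathbb{Z}/2\mathbb{Z} \oplus \mathbb{Z}/4\mathbb{Z}$.

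First, I will pin down the $2$-rank. Since $|A(k_1)|=2$ and $|A(k_2)|=8 \geq 8$, Proposition \ref{k2_non_cyclic} shows that $A(k_2)$ is not cyclic. Hence $A(k_2)$ is one of $\mathbb{Z}/2\mathbb{Z}\oplus\mathbb{Z}/4\mathbb{Z}$ or $(\mathbb{Z}/2\mathbb{Z})^3$, and it remains to exclude $\ranktwo A(k_2)=3$.

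To exclude rank $3$, I use the congruence $q\equiv 17 \pmod{32}$. This gives $v_2(q-1)=4$, so $e=v_2(q-1)-2=2$. By Remark \ref{rank_upper_bound}, $\ranktwo X(k_\infty) \leq 2^e-1 = 3$. I then argue as follows.

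\emph{Case $\ranktwo A(k_2)=3=2^e-1$.} Apply Corollary \ref{cor_full_rank} with $e=2$. Its hypotheses are (A1) and $|A(k_1)|=2$, both of which hold here. It gives $|X(k_\infty)|=|X(k_2)|=8$, which is finite, and we are done.

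\emph{Case $\ranktwo A(k_2)=2$.} Then $A(k_2)\cong\mathbb{Z}/2\mathbb{Z}\oplus\mathbb{Z}/4\mathbb{Z}$, and Theorem \ref{theorem_second_layer} gives the finiteness of $|X(k_\infty)|$.

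So I do not actually need to rule out rank $3$. The case split covers both possibilities, and each case yields finiteness directly.

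The main point to check is that Corollary \ref{cor_full_rank} really applies with $e=2$. Its proof uses \cite[Theorem 1.5]{Kuma25}, which requires the exact value $e=v_2(q-1)-2$; the congruence $q\equiv17\pmod{32}$ is what forces $e=2$. The only other thing to verify is that the order $8$ restricts the non-cyclic group to exactly those two shapes, which is immediate.
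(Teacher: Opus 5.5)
Your proposal is correct and follows the paper's proof: Proposition~\ref{k2_non_cyclic} forces $\ranktwo A(k_2)\in\{2,3\}$, the rank-$2$ case $A(k_2)\cong\mathbb{Z}/2\mathbb{Z}\oplus\mathbb{Z}/4\mathbb{Z}$ is handled by Theorem~\ref{theorem_second_layer}, and the rank-$3$ case by Corollary~\ref{cor_full_rank}, with $e=2$ coming from $q\equiv 17 \pmod{32}$. As you note yourself, the opening plan to exclude rank $3$ and the appeal to Remark~\ref{rank_upper_bound} are unnecessary.
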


\begin{proof}
By Proposition \ref{k2_non_cyclic}, we see 
that $\ranktwo A(k_2)$ must be $2$ or $3$.
When $\ranktwo A(k_2) = 2$, the assertion follows from 
Theorem \ref{theorem_second_layer}.
When $\ranktwo A(k_2) = 3$, the assertion follows from 
Corollary \ref{cor_full_rank}.
\end{proof}

\begin{rem}\label{rem_previous_results}
For the above corollary, a slightly stronger result is already shown.
That is, under the assumption of this corollary, actually 
$|X(k_\infty)| =8$ holds.
(Our corollary asserts only the finiteness of $|X(k_\infty)|$ 
for the case where $\ranktwo A(k_2) = 2$.)
This fact was first proven by the second author under a certain assumption.
It was obtained around 2025, but he had not written down as a paper.
Recently, Sasaki \cite{Sasaki} obtained the results including this fact 
(without extra assumptions).
For Sasaki's result on this fact, 
see \cite[Example 6.8 and Tables 3,4]{Sasaki}.
Note also that both Sasaki's proof and the second author's original proof 
use cyclotomic units mainly.
\end{rem}

In closing the present paper, we propose some conjectures.
We computed several data of the second layer $k_2$ 
for the case where $q$ and $k$ satisfy (A1) and (A2) 
in the lange $q < 300,000$.
(Here we used PARI/GP \cite{PARI} and the ideal class groups are
computed \textbf{under GRH}.
Recall also that the validity of Greenberg's conjecture is already confirmed 
in this range (\cite{FKKS}).)
As a result, we found several remarkable phenomena in the data.
We expect that these holds in general, and state them as the following:

\begin{conjintro}
Assume that $q$ and $k$ satisfy (A1) and (A2).
Then the following hold.
\begin{itemize}
\item[(i)] $|A (k_2)|$ is $8$ or $16$.
\item[(ii)] If $\ranktwo A (k_2) =3$ then $|A (k_2)|=8$.
\item[(iii)] Assume furthermore that $q \equiv 1 \pmod{32}$, 
then $|X (k_\infty)|=|X (k_2)|$ if and only if $\ranktwo A (k_2) =2$.
\end{itemize}
\end{conjintro}

We mention that the statement (i) had already been conjectured by the second author 
before conducting the above computation.

\begin{acknowledgement}
The authors would like to express their thanks to Sosuke Sasaki 
for answering our inquiry on the preprint \cite{Sasaki}.
The authors also would like to express their thanks to Yasushi Mizusawa 
for giving comments on the earlier version of the manuscript.
The first author is grateful to the authors of \cite{FKKS} for 
generously sharing their preprint. 
\end{acknowledgement}

\bigskip

\begin{flushleft}
Tsuyoshi Itoh \\
Division of Mathematics, 
Education Center,
Faculty of Innovative Management Science, \\
Chiba Institute of Technology, \\
2--1--1 Shibazono, Narashino, Chiba, 275--0023, Japan \\
e-mail : \texttt{tsuyoshi.itoh@it-chiba.ac.jp}

\bigskip

Naoki Kumakawa \\
Department of Mathematics,
School of Fundamental Science and Engineering, \\
Waseda University, \\
3--4--1 Okubo, Shinjuku-ku, Tokyo, 169--8555, Japan \\
e-mail : \texttt{kumakawa@ruri.waseda.jp}

\end{flushleft}

\end{document}